\theoremstyle{definition}
\newtheorem{definition}{Definition}[section]
\theoremstyle{remark}
\newtheorem{remark}[definition]{Remark}
\theoremstyle{theorem}
\newtheorem{theorem}[definition]{Theorem}
\theoremstyle{theorem}
\theoremstyle{remark}
\theoremstyle{theorem}
\newtheorem{corollary}[definition]{Corollary}
\theoremstyle{theorem}
\newtheorem{proposition}[definition]{Proposition}
\theoremstyle{theorem}
\numberwithin{equation}{section}
\def\Xint#1{\mathchoice
    {\XXint\displaystyle\textstyle{#1}}%
    {\XXint\textstyle\scriptstyle{#1}}%
    {\XXint\scriptstyle\scriptscriptstyle{#1}}%
    {\XXint\scriptscriptstyle\scriptscriptstyle{#1}}%
    \!\int}
\def\XXint#1#2#3{{\setbox0=\hbox{$#1{#2#3}{\int}$}
      \vcenter{\hbox{$#2#3$}}\kern-.5\wd0}}
\def\mint{\Xint-}
\definecolor{ao}{rgb}{0.0, 0.5, 0.0}
\DeclareMathOperator*{\aplim}{ap-\lim}
\newcommand{\om}{\omega}
\newcommand{\Om}{\Omega}
\newcommand{\R}{\mathbb{R}}
\newcommand{\C}{\mathbb{C}}
\newcommand{\F}{\mathcal{F}}
\newcommand{\HH}{\mathcal{H}}
\newcommand{\di}{\mathrm{d}}
\newcommand{\M}{\mathbb{M}}
\newcommand{\G}{\mathcal{G}}
\newcommand{\E}{\mathcal{E}}
\newcommand{\e}{\mathrm{E}}
\newcommand{\Ll}{\mathcal{L}}
\title{Brittle fracture in linearly elastic plates}
\date{\today}
\author[S. Almi]{Stefano Almi}
\address[Stefano Almi]{Faculty of Mathematics, University of Vienna, 
Oskar-Morgenstern-Platz 1, 1090 Wien, Austria.}
\email{stefano.almi@univie.ac.at}
\author[E. Tasso]{Emanuele Tasso}
\address[Emanuele Tasso]{Technische Universit\"at Dresden, Faculty of Mathematics, 01062 Dresden, Germany}
\email{emanuele.tasso@tu-dresden.de}
 \subjclass[2010]{49J45,  	%Methods involving semicontinuity and convergence; relaxation
 			   74R10,  	%Brittle fracture
			   74G65,  %Energy minimization in equilibrium problems in solid mechanics	
			   74K15.  	%Membranes	
			   	   }
 \keywords{Dimension reduction, $\Gamma$-convergence, brittle fracture, free discontinuity problems.}
\begin{document}

\maketitle

\begin{abstract}
In this work we derive by $\Gamma$-convergence techniques a model for brittle fracture linearly elastic plates. Precisely, we start from a brittle linearly elastic thin film with positive thickness~$\rho$ and study the limit as~$\rho$ tends to~$0$. The analysis is performed with no a priori restrictions on the admissible displacements and on the geometry of the fracture set. The limit model is characterized by a Kirchhoff-Love type of structure.
\end{abstract}

\section{Introduction}\label{s:intro}

This paper is devoted to the rigorous derivation of a brittle fracture model for elastic plates by means of dimension reduction techniques. The target $(n-1)$-dimensional plate is represented by an open bounded subset~$\omega$ of~$\R^{n-1}$ with Lipschitz boundary~$\partial\omega$. As it is typical in dimension reduction problems, the plate is first endowed with a fictitious thickness~$\rho>0$, so that, in an $n$-dimensional setting, the initial reference configuration is given by the set~$\Om_{\rho}:= \omega \times (-\frac{\rho}{2}, \frac{\rho}{2})$. The starting point of our analysis is the by now classical variational model of brittle fracture in linearly elastic bodies~\cite{fra}
\begin{equation}\label{e:intro1}
\F_{\rho}(u) : = \frac{1}{2} \int_{\Om_{\rho}} \C e(u) {\, \cdot\,} e(u) \, \di x + \HH^{n-1}(J_{u}) \,,
\end{equation}
where the displacement~$u\colon \Om_{\rho} \to \R^{3}$ belongs to the space~$GSBD^{2}(\Om_{\rho})$ of generalized special functions of bounded deformation~\cite{dal},~$e(u)$ is the approximate symmetric gradient of~$u$,~$J_{u}$ stands for the jump set of~$u$,~$\HH^{n-1}$ indicates the $(n-1)$-dimensional Hausdorff measure in~$\R^{n}$, and~$\C$ is the linear elasticity tensor, here characterized by its Lam\'e coefficients~$\lambda$ and~$\mu$. We further refer to Sections~\ref{s:preliminaries} and~\ref{s:setting} for the notation and the precise assumptions.

The aim of our work is to study the limit, in terms of~$\Gamma$-convergence, of the functional~\eqref{e:intro1} as the thickness parameter~$\rho$ tends to~$0$. The literature related to dimension reduction problems in Continuum Mechanics is very rich. In a purely elastic regime, we mention~\cite{per, Cia1997} for the derivation of reduced models of linearly elastic plates, and~\cite{mor1, MR4121138,  mor4, MR1916989,  mul1, mor3, mor2} for a number of nonlinear models for plates and shells obtained as limit of $3$-dimensional nonlinear elasticity. Further applications to the theory of elastic plates and shells can be found in~\cite{hor, hor2, neu, vel}, where the interplay between dimension reduction and homogenization is studied. In an elastoplastic setting, in~\cite{dav3, dav2, dav1, mag} the authors obtained models for thin elastoplastic plates, starting from either linearized or finite plasticity, and also proved the convergence of the corresponding quasistatic evolutions, in the spirit of evolutionary $\Gamma$-convergence~\cite{mie1, mie2}. 

In the context of fracture mechanics, the study of the $\Gamma$-limit of free discontinuity functionals of the form~\eqref{e:intro1} has been considered, for instance, in~\cite{alm, bab2, bab1, bra, fre, bab3}. In particular,~\cite{bab2, bra} are concerned with the nonlinearly elastic case, in which the stored elastic energy density obeys a $p$-growth condition of the form $W(F) \geq C(|F|^{p} - 1)$ which is incompatible with linear elasticity. The papers~\cite{alm, bab3} consider the antiplanar case, where the energy is in the form~\eqref{e:intro1} but the displacement~$u$ is supposed to be orthogonal to the middle surface~$\omega$, so that the dimension reduction problem becomes scalar and is described in terms of $GSBV$-functions (see, e.g.,~\cite[Section~4.5]{amb1}). In~\cite{fre} the authors considered the convergence of quasistatic evolutions in the vectorial case, under the assumption that the crack path is known a priori, is transversal to the middle surface~$\omega$, and cuts the whole of~$\Om_{\rho}$. The geometrical restriction on the fracture set was then removed in~\cite{bab1}, where the $\Gamma$-limit of~$\F_{\rho}$ in~\eqref{e:intro1} has been studied under the restriction~$u \in SBD(\Om_{\rho})$, the space of special functions of bounded deformation~\cite{amb2}. In order to %gain compactness in the $\Gamma$-convergence analysis \textcolor{blue}{and more precisely in order to 
ensure that sequences equi-bounded in energy are sequentially relatively compact, the authors had however to assume an a priori bound on the $L^{\infty}$-norm of the displacement~$u$, which is in general not guaranteed by the boundedness of functional~$\F_{\rho}$. Moreover,  the $L^\infty$-assumption simplifies the argument used to characterize the limiting thin plate model and to detect all the admissible displacement belonging to the domain of the $\Gamma$-limit.

The aim and main novelty of our work is to study the limit of~$\F_{\rho}$ in a~$GSBD$-setting, removing the unphysical a priori bound on the norm of the displacement. As in~\cite{bab1}, we prove in Theorem~\ref{t:limit} that the $\Gamma$-limit writes
\begin{displaymath}
\frac{1}{2} \int_{\Om_1} \C_{0} e(u)  {\, \cdot\,} e(u) \, \di x+ \HH^{n-1} (J_{u})
\end{displaymath} 
for $u \in GSBD^{2}(\Om_1)$ such that $e_{i, n}(u) = 0$ for $i = 1, \ldots, n$ and $(\nu_{u})_{n} = 0$ on~$J_u$. Here,~$\nu_{u}$ is the approximate unit normal to~$J_{u}$ and~$\C_{0}$ is the reduced elasticity tensor of the Kirchhoff-Love theory of elastic plates~\cite{Cia1997}, defined in terms of the Lam\'e coefficients as
\begin{displaymath}
\C_{0} \mathrm{E} \cdot \mathrm{E} :=  \frac{2 \lambda \mu}{\lambda + 2\mu}( tr(\e))^{2} + 2\mu |\e|^{2}
\end{displaymath}
for $\e$ in the space of square symmetric matrices of order~$n-1$. The most technical part of our result, which in particular influences the construction of a recovery sequence in the proof of Theorem~\ref{t:limit}, is the characterization of the admissible displacement~$u$ in the limit model. Indeed, in Theorem~\ref{t:KL} we show that~$u$ has a Kirchhoff-Love type of structure: the out-of-plane component~$u_{n}$ does not depend on the vertical variable~$x_{n}$, while the in-plane components~$u_{1},\ldots, u_{n-1}$ satisfy
\begin{equation}\label{e:intro2}
u_{\alpha} (x', x_{n}) = \overline{u}_{\alpha} (x') - x_{n} \partial_{\alpha} u_{n}(x') 
\end{equation}
for $x= (x', x_{n}) \in \Om_{1}$ and $\alpha = 1, \ldots, n-1$, where $\overline{u}_{\alpha} (x') := \int_{-1/2}^{1/2} u_{\alpha} (x', x_{n}) \, \di x_{n}$. In contrast to~\cite{bab1}, due to the lack of integrability of $u$, we cannot conclude $u_n \in GSBV(\omega)$ while we can ensure that at a.e.~$x' \in \omega$ $u_n$ is approximate differentiable. Moreover, the proof of~\eqref{e:intro2} is not based on convolution techniques combined with the study of the distributional symmetric gradient~$\e u$ of~$u$ (see~\cite[Proposition~5.2]{bab1}), as~$\e u$ is not a bounded Radon measure for~$u \in GSBD^{2}(\Om_{1})$. On the contrary, we obtain~\eqref{e:intro2} through an approximation result similar to~\cite[Theorem 1]{cha} and~\cite[Theorem~5]{iur}, which therefore allows us to work with functions that are $W^{1, \infty}$ out of the closure of their jump set. The crucial point in such an approximation is that we need to 
\begin{itemize}
\item guarantee that on large part of the domain~$\Om_{1}$ the $n$-th component of the approximating function~$u_{k}$ is still independent of~$x_{n}$;
\vspace{1mm}
\item control the $\HH^{n-1}$-measure of the projection~$\pi_{n}(\overline{J_{u_{k}}})$ of the closure of the jump set of the approximating sequence~$u_{k}$ on~$\om$ by means of~$\HH^{n-1}( \pi_{n} (J_{u}))$. 
\end{itemize}
The two properties above, together with the fact that actually $\HH^{n-1}( \pi_{n} (J_{u}))= 0$, allow us to apply the Fundamental Theorem of Calculus in the direction~$x_{n}$ to the sequence~$u_{k}$, obtain a first version of~\eqref{e:intro2} for~$u_{k}$, and then conclude by passing to the limit in~$k$ and by further exploiting that the jump set~$J_u$ is transversal to the middle surface~$\om$. This argument is made rigorous in Propositions~\ref{app},~\ref{p:KL2}, and~\ref{vj}. In a similar way to~\cite{bab1}, we show that the jump set~$J_{u}$ takes the form
% is transversal to the middle surface~$\omega$ and takes the form
\begin{displaymath}
J_{u} = (J_{\overline{u}} \cup J_{u_n} \cup J_{\nabla u_n} ) \times \bigg( -\frac12 , \frac12 \bigg)\,,
\end{displaymath}
where $\overline{u} : = ( \overline{u}_{1}, \ldots, \overline{u}_{n-1})$, concluding the description of the admissible displacements.

Here we point out that it is possible to make use of the recent results obtained in~\cite{CCS20} in the context of $GSBD^p(\Omega)$ with $p>1$, in order to deduce the Kirchoff-Love structure of any admissible displacement (see also \cite{fri} for the existence of an approximate gradient for functions in $GSBD^p(\Omega)$ with $p >1$). Nevertheless, by using Proposition \ref{app}, it is easy to verify that the same result of Theorem \ref{t:KL} can be proved for $u$ belonging to \begin{align}
\{ u \in GSBD(\Om_{1}):  \, e_{i, n} (u) = 0 \text{ in~$\Om_{1}$,}
\ \mathcal{H}^{n-1}(J_u)< \infty, \ \text{and $(\nu_{u})_{n} = 0$ on~$J_{u}$}\}\,, \nonumber
\end{align}
instead of $\mathcal{KL}(\Om_{1})$ (see \eqref{e:KL}),
with the obvious modifications $\nabla u_n \in GSBD(\omega)$ and $\overline{u} \in GSBD(\omega)$. As a matter of fact, our method highlights the fact that \emph{the nature of the Kirchoff-Love structure does not depend on a major integrability of the approximate symmetric gradient}. In addition, we consider our procedure self contained, in the sense that we do not need to deal with any kind of Korn's or Korn-Poincar\'e's inequalities.

Finally, we extend the $\Gamma$-convergence result of Theorem~\ref{t:limit} to the case of non-homogeneous Dirichlet boundary conditions in Corollary~\ref{c:bc} and further discuss the convergence of minima and minimizers in Theorem~\ref{t:compactness2} and Corollary~\ref{c:conv_minimizer}. 
%The latter results are based on the compactness Theorem~\ref{t:comfk} obtained first in~\cite[Theorem~1.1]{cris2}. In Appendix~\ref{appendix} we provide an alternative proof of such result, based on a Fr\'echet-Kolmogorov compactness rather than on a quite technical Korn-Poincar\'e inequality for functions with small jump sets.

%%%%%%%%%%%%%%%%%%%%%%%%%%%%%%%%%%%%%%

\section{Preliminaries and notation}\label{s:preliminaries}

We briefly recall here the notation used throughout the paper. For $n , k \in \mathbb{N}$, we denote by~$\mathcal{L}^{n}$ the Lebesgue measure in~$\R^{n}$ and by~$\HH^{k}$ the $k$-dimensional Hausdorff measure in~$\R^{n}$. The symbol~$\mathbb{M}^{n}$ stands for the space of square matrices of order~$n$ with real coefficients, while~$\mathbb{M}^{n}_{s}$ indicates the subspace of~$\mathbb{M}^{n}$ of squared symmetric matrices of order~$n$. For every $r>0$ and every $x \in \R^{n}$, we denote by~$B_{r}(x)$ the open ball in~$\R^{n}$ of radius~$r$ and center~$x$. We will indicate with~$\{e_{1}, \ldots, e_{n}\}$ the canonical basis of~$\R^{n}$ and with~$\mathbbm{1}_{E}$ the characteristic function of a set~$E \subseteq \R^{n}$. For every $\xi \in \mathbb{S}^{n-1}$,~$\pi_{\xi}$ stands for the projection over the subspace~$\xi^{\perp}$ orthogonal to~$\xi$. If~$\xi = e_{i}$ for $i = 1, \ldots, n$, we use the symbol~$\pi_{i}$.

For every $U\subseteq \R^{n}$ open, we denote by~$\mathcal{M}_{b}(U)$ and~$\mathcal{M}^{+}_{b}(U)$ the set of bounded Radon measures and of positive bounded Radon measures in~$U$, respectively. Let $m \in \mathbb{N}$ with $m\geq 1$. For every $\mathcal{L}^{n}$-measurable function $v \colon U \to \R^{m}$ and every~$x \in U$ such that
\begin{displaymath}
\limsup_{r \searrow 0} \, \frac{\Ll^{n}( U \cap B_{r} (x)) }{r^{n}} >0\,,
\end{displaymath}
we say that $a \in \R^{m}$ is the approximate limit of~$v$ at~$x$ if
\begin{displaymath}
\lim_{r \searrow 0} \,  \frac{\Ll^{n}( U \cap B_{r} (x) \cap \{ | v - a| > \epsilon\} ) }{r^{n}} = 0 \qquad \text{for every $\epsilon > 0$}\,.
\end{displaymath}
In this case, we write
\begin{displaymath}
\aplim_{y \to x} v(y) = a\,.
\end{displaymath}
We say that $x \in U$ is an approximate jump point of~$v$, and we write $x \in J_v$, if there exist $a, b \in \R^{m}$ with $a \neq b$ and $\nu \in \mathbb{S}^{n-1}$ such that
\begin{displaymath}
\aplim_{\substack{ y \to x \\ (y - x) \cdot \nu >0}}\, v(x) = a \qquad \text{and} \qquad \aplim_{\substack{ y \to x \\ (y - x) \cdot \nu <0}}\, v(x) = b\,.
\end{displaymath}
In particular, for every $x \in J_v$ the triple $(a, b, \nu)$ is uniquely determined up to a change of sign of~$\nu$ and a permutation of~$a$ and~$b$. We indicate such triple by~$(v^{+}(x), v^{-}(x), \nu_{v}(x))$. The jump of~$v$ at~$x \in J_v$ is defined as $[v](x) := v^{+}(x) - v^{-}(x)$. We denote by~$(\nu_{v})_{i}$ the components of~$\nu_{v}$, for $i = 1, \ldots, n$.

The space $BV(U;\R^{n})$ of functions of bounded variation is the set of $u\in L^{1}(U;\R^{n})$ whose distributional gradient~$Du$ is a bounded Radon measure on~$U$ with values in~$\mathbb{M}^{n}$. Given $u\in BV(U;\R^{n})$, we can write $Du=D^{a}u+D^{s}u$, where~$D^{a}u$ is absolutely continuous and~$D^{s}u$ is singular w.r.t.~$\Ll^{n}$. The set~$J_{u}$ is countably $(\HH^{n-1}, n-1)$-rectifiable and has approximate unit normal vector~$\nu_{u}$, while the density~$\nabla{u} \in L^{1}(U;\mathbb{M}^{n})$ of~$D^{a}u$ w.r.t.~$\Ll^{n}$ coincides a.e.~in~$U$ with the approximate gradient of~$u$, that is, for a.e.~$x \in U$ it holds
\begin{displaymath}
\aplim_{y \to x} \, \frac{u(y) - u(x) - \nabla{u}(x) \cdot (y - x) }{| x - y| } = 0\,.
\end{displaymath}
% The function~$u$ is approximatively differentiable $\Leb^{n}$-a.e.~in~$U$ and its approximate gradient~$\nabla{u}$ belongs to $L^{1}(U;\mathbb{M}^{m\times n})$ and coincides $\Leb^{n}$-a.e.~in~$U$ with the density of $D^{a}u$ with respect to~$\Leb^{n}$. Note that the jump set~$S_{u}$ agrees with the complement of the set of Lebesgue points of~$u$, up to an $\HH^{n-1}$-negligible set. For all these notions we refer to~\cite[Sections~3.6 and~3.9]{MR1857292}.

The space $SBV(U;\R^{n})$ of special functions of bounded variation is defined as the set of all $u\in BV(U;\R^n)$ such that $|D^{s}u|(U\setminus J_{u})=0$. Moreover, we denote by~$SBV_{loc}(U;\R^{n})$ the space of functions belonging to $SBV(V;\R^{n})$ for every $V\Subset U$. For $p\in [1,+\infty)$, $SBV^{p}(U;\R^{n})$ stands for the set of functions $u\in SBV(U;\R^{n})$ with approximate gradient $\nabla{u}\in L^{p}(U;\mathbb{M}^{n})$ and $\HH^{n-1}(J_{u})<+\infty$. 

We say that $u \in GSBV( U; \R^{n})$ if $\varphi(u)\in SBV_{loc}(U; \R^{n})$ for every $\varphi\in C^{1}(\R^{n};\R^{n})$ whose gradient has compact support. Also for~$u\in GSBV(U;\R^{n})$ the approximate gradient~$\nabla{u}$ exists $\Ll^{n}$-a.e.~in~$U$ and the jump set~$J_{u}$ is countably $(\HH^{n-1}, n-1)$-rectifiable, with approximate unit normal vector~$\nu_{u}$. For $p\in [1,+\infty)$, we define $GSBV^{p}(U;\R^{n})$ as the set of functions $u\in GSBV(U;\R^{n})$ such that $\nabla{u}\in L^{p}(U;\mathbb{M}^{n})$ and $\HH^{n-1}(J_{u})<+\infty$. We refer to ~\cite[Sections~3.6,~3.9, and~4.5]{amb1} for more details on the above spaces.

In a similar fashion, the space $BD(U)$ of functions of bounded deformation is defined as the set of functions~$u \in L^{1}(U;\R^{n})$ whose distributional symmetric gradient~$Eu$ is a bounded Radon measure on~$U$ with values in~$\mathbb{M}^{n}_{s}$. In particular, we can split~$Eu$ as $Eu = E^{a}u + E^{s}u$, where~$E^{a}u$ is absolutely continuous and~$E^{s}u$ is singular w.r.t.~$\Ll^{n}$. Furthermore, the density~$e(u) \in L^{1}(U; \mathbb{M}^{n}_{s})$ of~$E^{a}u$ is the approximate symmetric gradient of~$u$, meaning that for a.e.~$x \in U$ it holds
\begin{equation}\label{e:approx_grad}
\aplim_{y \to x} \, \frac{\big( u(y) - u(x) - e(u) (x) (y - x) \big) \cdot (y-x) }{|x - y|^{2}} = 0 \,.
\end{equation}

The space~$SBD(U)$ of special functions of bounded deformation is the set of $u \in BD(U)$ such that $| E^{s}u| (U \setminus J_u)=0$. For $p \in (1, +\infty)$, we further denote by~$SBD^{p}(U)$ the space of functions~$u \in SBD (U)$ such that $\HH^{n-1}(J_u)<+\infty$ and~$e(u) \in L^{p}(U; \mathbb{M}^{n}_{s})$.

We now give the definition of $GSBD(U)$, the space of generalized special functions of bounded deformation~\cite{dal}. For $u \colon U \to \R^{n}$ measurable, $\xi \in \mathbb{S}^{n-1}$, $y \in \R^{n}$, and $V \subseteq \R^{n}$, we set 
\begin{align*}
& \Pi^{\xi} := \{ z \in \R^{n}: z \cdot \xi = 0\}\,, \qquad V^{\xi}_{y} := \{t \in \R: y + t\xi \in V\}\,,\\
& \hat{u}^{\xi}_{y} := u(y + t\xi) \cdot \xi \qquad \text{for every $t \in V^{\xi}_{y}$}\,, \qquad J^{1}_{\hat{u}^{\xi}_{y}} := \{ t \in V^{\xi}_{y} : \,| [\hat{u}^{\xi}_{y}]| >1\}\,.
\end{align*}  
Then, we say that $u \in GSBD(U)$ if there exists $\lambda \in \mathcal{M}^{+}_{b}(U)$ such that for every $\xi \in \mathbb{S}^{n-1}$ one of the two equivalent conditions is satisfied \cite[Theorem~3.5]{dal}:
\begin{itemize}

\item for every $\theta \in C^{1}(\R; [-\tfrac{1}{2}; \tfrac{1}{2}])$ such that $0 \leq \theta' \leq 1$, the partial derivative $D_{\xi} (\theta (u \cdot \xi))$ belongs to~$\mathcal{M}_{b}(U)$ and $| D_{\xi} (\theta (u \cdot \xi)) | (B) \leq \lambda(B)$ for every Borel subset~$B$ of~$U$;

\item for $\HH^{n-1}$-a.e.~$y \in \Pi_{\xi}$ the function~$\hat{u}^{\xi}_{y}$ belongs to $SBV_{loc}(U^{\xi}_{y})$ and
\begin{displaymath}
\int_{\Pi^{\xi}} \big| (D \hat{u}^{\xi}_{y}) \big| \big( B^{\xi}_{y} \setminus J^{1}_{\hat{u}^{\xi}_{y}} \big) + \HH^{0} \big( B^{\xi}_{y} \cap J^{1}_{\hat{u}^{\xi}_{y}} \big) \, \di \HH^{n-1}(y) \leq \lambda(B)
\end{displaymath}
for every Borel subset~$B$ of~$U$.

\end{itemize}
For $u \in GSBD(U)$, the approximate symmetric gradient~$e(u)$ in~\eqref{e:approx_grad} exists a.e.~in~$U$ and belongs to~$L^{1}(U;\mathbb{M}_{s}^{n})$. Its components are denoted by~$e_{i,j}(u)$ for $i, j \in \{1, \ldots, n\}$. The jump set~$J_u$ is countably $(\HH^{n-1}, n-1)$-rectifiable with approximate unit normal vector~$\nu_{u}$. If~$\hat{\mu}_{u} \in \mathcal{M}^{+}_{b}(U)$ is as in~\cite[Definitions~4.8,~4.10, and~4.16]{dal} and we set
\begin{equation}\label{e:THETA}
\Theta_{u}:= \Big\{ x \in U : \, \limsup_{r \searrow 0} \, \frac{\hat{\mu}_{u} (B_{r}(x) )} { r^{n} } >0 \Big\}\,,
\end{equation}
it was proven in~\cite[Proposition~6.1 and Theorem~6.2]{dal} that~$\Theta_{u}$ is countably $(\HH^{n-1}, n-1)$-rectifiable and coincides with~$J_u$, up to a set of~$\HH^{n-1}$-measure zero. If~$U$ has a Lipschitz boundary~$\partial U$ and $v \in GSBD(U)$, there exists a function $Tr(v) \colon \partial U \to \R^{n}$ such that for $\HH^{n-1}$-a.e.~$x \in \partial U$ 
\begin{displaymath}
Tr(v) (x) = \aplim_{\substack {y \to x\\ y \in \Om}} \, v(y) \,. 
\end{displaymath}
We refer to~$Tr(v)$ as the trace of~$v$ on~$\partial U$. Finally, for $p \in (1, +\infty)$ we say that $u \in GSBD^{p}(U)$ if $e(u) \in L^{p}(U; \mathbb{M}^{n}_{s})$ and $\HH^{n-1}(J_u) <+\infty$. We further refer to~\cite{dal} for an exhaustive discussion on the fine properties of functions in~$GSBD(U)$.

%%%%%%%%%%%%%%%%%%%%%%%%%%%%%%%%%%%%%%

\section{Setting of the problem and main results}\label{s:setting}

In this section we present the setting of the problem and the main results of the paper. We start by discussing the energy functional that we consider in the non-rescaled reference configuration. Let $\om$ be an open bounded subset of~$\R^{n-1}$ with Lipschitz boundary~$\partial\om$. As we aim at deducing a model of brittle fracture on thin films moving from the variational theory of brittle fractures in
linearly elastic materials~\cite{fra}, we endow~$\om$ with a fictitious thickness~$\rho>0$ and define~$\Om_{\rho} := \om \times (- \frac{\rho}{2}, \frac{\rho}{2})$. Therefore, the starting point of our analysis is the functional
\begin{equation}\label{e:griffith}
\F_{\rho}(u) : = \frac{1}{2} \int_{\Om_{\rho}} \C e(u) {\, \cdot\,} e(u) \, \di x + \HH^{n-1}(J_{u}) \,,
\end{equation}
where the displacement~$u \colon \Om_{\rho} \to \R^{n}$ belongs~$GSBD^{2}(\Om_{\rho})$ and~$\C$ stands for the usual linear elasticity tensor. In a fracture mechanics setting~\cite{fra, grif}, the volume integral in~\eqref{e:griffith} is the stored elastic energy, while the surface term denotes the energy dissipated by the production of a fracture set~$J_u$. We assume in~\eqref{e:griffith} that the elastic body~$\Om_{\rho}$ is isotropic and homogeneous outside the crack set~$J_u$, so that~$\C$ is characterized by constant Lam\'e coefficients~$\lambda, \mu$, namely,
\begin{displaymath}
\C \mathrm{E} := \lambda tr(\mathrm{E}) \mathrm{I} + 2 \mu \mathrm{E}\qquad \text{for every $\mathrm{E} \in \mathbb{M}^{n}_{s}$}\,,
\end{displaymath} 
where~$\mathrm{I}$ is the identity matrix and $tr(\mathrm{E})$ denotes the trace of the matrix~$\mathrm{E}$. We notice that we may also consider, for instance, a continuous dependence of~$\lambda$ and~$\mu$ on the space variable, without changing the results of this work and with minor modifications of the proofs. As usual, we assume~$\C$ to be positive definite on the space~$\M^{n}_{s}$, which is equivalent to require~$\mu >0$ and $2 \mu  + n \lambda >0$.% \textcolor{blue}{(Mi stavo chiedendo se potrebbe essere utile per il lettore avere (3.1) anche scritta con $\mathbb{C}$ esplicitato con gli indici...)}

As it is customary in dimension reduction, we rescale the energy functional~$\F_{\rho}$ to the fixed domain~$\Om_{1} = \omega \times \big( -\frac12, \frac12 \big)$, the so called rescaled configuration. In order to precisely describe this step, let us simplify by assuming for a moment that the displacement~$u$ actually belongs to~$C^{1}(\Om_{\rho}\setminus M_{\rho}; \R^{n})$ for a smooth hypersurface~$M_{\rho} \subseteq \Om_{\rho}$, and that~$u$ has a jump discontinuity along~$M_{\rho}$, so that $J_{u} = M_{\rho}$. 
%Then, we rewirte~$\F_{\rho}$ as
%\begin{equation}\label{e:griffith_2}
%\F_{\rho}(u) = \frac{1}{2} \int_{\Om_{\rho}} \C e(u) {\, \cdot\,} e(u) \, \di x + \HH^{n-1}(M_{\rho})\,.
%\end{equation}
For every $x= (x', x_{n}) \in \Om_1$ we now set
\begin{equation}\label{e:rescaled_u}
\psi_{\rho}(x) := (x', \rho x_{n})\,, \qquad v (x) := \big( u_1 ( \psi_{\rho}(x) ) , \ldots , \rho u_n (\psi_{\rho}(x) ) \big)
\end{equation}
and notice that for $x \in \Om_1$ and $\alpha = 1 , \ldots, n-1$ it holds
\begin{eqnarray}
&& \displaystyle \vphantom{\frac12}  e_{\alpha, \beta} (u) (\psi_{\rho}(x)) = e_{\alpha, \beta} (v) (x) = : e^{\rho}_{\alpha, \beta}(v) (x) \,, \label{e:rescaled_strain1} \\
&& \displaystyle   \, e_{\alpha, n} (u) ( \psi_{\rho}(x)) = \frac{1}{\rho} e_{\alpha, n} (v) (x) = : e^{\rho}_{\alpha, n}(v) (x)  \,, \label{e:rescaled_strain2}\\
&&\displaystyle     e_{n,n} (u) (\psi_{\rho}(x)) = \frac{1}{\rho^{2}} e_{n,n}(v) (x) = : e^{\rho}_{n,n}(v) (x) \label{e:rescaled_strain3}\,.
\end{eqnarray}
We further define
\begin{equation}\label{e:phirho}
\phi_{\rho} (\nu) := \bigg| \bigg( \nu_{1}, \ldots, \frac{1}{\rho} \, \nu_{n} \bigg) \bigg| \qquad \text{for every $\rho>0$ and every $\nu \in \R^{n}$}\,.
\end{equation}
By a change of coordinate and using the notation~\eqref{e:rescaled_strain1}--\eqref{e:phirho}, we rewrite~\eqref{e:griffith} computed for $u \in C^{1}(\Om_{\rho}\setminus M_{\rho}; \R^{n})$ as
\begin{equation}\label{e:G_rho}
\G_{\rho} (v) := \frac{\rho}{2} \int_{\Om_1} \C e^{\rho} (v) {\, \cdot\,} e^{\rho}(v) \, \di x + \rho \int_{J_{v}} \phi_{\rho}(\nu_{v}) \, \di \HH^{n-1} \,,
\end{equation}
where $v \in C^{1}(\Om_1 \setminus J_{v}; \R^{n})$ is as in~\eqref{e:rescaled_u} and $J_{v} = \psi^{-1}_{\rho}(M_{\rho})$.
% and, for $\alpha, \beta = 1, \ldots, n-1$, we have set
%\begin{equation}\label{e:rescaled_strain}
%e^{\rho}_{\alpha,\beta}(v) := e_{\alpha,\beta} (v) \,, \quad e^{\rho}_{ \alpha , n} (v) := \frac{1}{\rho} \, e_{\alpha, n}(v) \,, \quad e^{\rho}_{n, n} (v) := \frac{1}{\rho^{2}} \, e_{n, n} (v) \,.
%\end{equation}
Finally, we extend the functional~\eqref{e:G_rho} to $v \in GSBD^{2}(\Om_1)$ and define
\begin{equation}\label{e:griffith3}
\E_{\rho}(v) := \left\{ \begin{array}{ll}
\displaystyle  \frac{1}{\rho} \, \G_{\rho}(v) & \text{for $v \in GSBD^{2}(\Om_1)$}\,,\\[2mm]
\displaystyle \vphantom{\int}+\infty & \text{otherwise in $L^{0}(\Om_1)$}\,.
\end{array}\right.
\end{equation}

We now study the limit of~$\E_{\rho}$ as the thickness parameter~$\rho$ tends to~$0$. Before giving the exact expression of the limit functional, however, we investigate the closedness of a converging sequence~$u_{\rho} \in GSBD^{2}(\Om_1)$ equi-bounded in energy.

\begin{proposition}
\label{p:compactness}
Let $u_{\rho} \in GSBD^{2}(\Om_1)$ and $u \colon \Om_1 \to \R^{n}$ measurable be such that
\begin{equation}\label{e:bound}
\sup_{\rho>0} \, \E_{\rho}(u_{\rho}) < +\infty
\end{equation}
and $u_{\rho} \to u$ in measure as $\rho\to0$. Then, $u \in GSBD^{2}(\Om_1)$, $e(u_{\rho}) \rightharpoonup e(u)$ weakly in~$L^{2}(\Om_1; \M^{n}_{s})$, $e_{i,n}(u)=0$ and $e_{i, n}(u_{\rho}) \to 0$ in $L^{2}(\Om_{1})$  for $i = 1, \ldots, n$, and~$(\nu_{u})_{n} = 0$ $\HH^{n-1}$-a.e.~on~$J_u$.
\end{proposition}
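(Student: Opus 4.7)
The plan is to peel off the information contained in the bound $\sup_\rho\E_\rho(u_\rho)<+\infty$ one layer at a time, and then to invoke a $GSBD^2$ closedness result to close up the argument.

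First, from the positive definiteness of $\C$ on $\M^n_s$ I read off the inequality $\C e^\rho(u_\rho)\cdot e^\rho(u_\rho)\geq c_0|e^\rho(u_\rho)|^2$. Combined with the scaling \eqref{e:rescaled_strain1}--\eqref{e:rescaled_strain3} of the components of $e^\rho$ and with \eqref{e:bound}, this yields
\begin{equation*}
\int_{\Om_1}|e(u_\rho)|^2\,\di x\leq C,\quad \int_{\Om_1}|e_{\alpha,n}(u_\rho)|^2\,\di x\leq C\rho^2,\quad \int_{\Om_1}|e_{n,n}(u_\rho)|^2\,\di x\leq C\rho^4
\end{equation*}
for every $\alpha=1,\ldots,n-1$, so $e_{i,n}(u_\rho)\to 0$ strongly in $L^2(\Om_1)$ for all $i=1,\ldots,n$. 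From the surface part, using the pointwise bounds $\phi_\rho(\nu)\geq 1$ and $\phi_\rho(\nu)\geq \rho^{-1}|\nu_n|$ valid on $\mathbb{S}^{n-1}$, I also get
\begin{equation*}
\HH^{n-1}(J_{u_\rho})\leq C\qquad\text{and}\qquad \int_{J_{u_\rho}}|(\nu_{u_\rho})_n|\,\di\HH^{n-1}\leq C\rho.
\end{equation*}

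Since $u_\rho\to u$ in measure and $\|e(u_\rho)\|_{L^2}+\HH^{n-1}(J_{u_\rho})\leq C$, I would then invoke the closedness theorem for $GSBD^p$ (e.g.\ \cite[Theorem~11.3]{dal} or the refinement in \cite{CCS20}) to conclude that $u\in GSBD^2(\Om_1)$ and $e(u_\rho)\rightharpoonup e(u)$ weakly in $L^2(\Om_1;\M^n_s)$. The weak limit of $e_{i,n}(u_\rho)$ must equal $e_{i,n}(u)$, while the strong $L^2$-convergence of $e_{i,n}(u_\rho)$ to $0$ forces $e_{i,n}(u)=0$. For the jump-normal constraint, for each $\epsilon>0$ I would use the continuous, convex, positively $1$-homogeneous and even density $\varphi_\epsilon(\nu):=|\nu_n|+\epsilon|\nu|$, which satisfies $\varphi_\epsilon\geq\epsilon$ on $\mathbb{S}^{n-1}$. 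The anisotropic lower-semicontinuity part of the same closedness result then gives
\begin{equation*}
\int_{J_u}\varphi_\epsilon(\nu_u)\,\di\HH^{n-1}\leq\liminf_{\rho\to 0}\int_{J_{u_\rho}}\varphi_\epsilon(\nu_{u_\rho})\,\di\HH^{n-1}\leq\liminf_{\rho\to 0}\bigl(C\rho+\epsilon\,\HH^{n-1}(J_{u_\rho})\bigr)\leq \epsilon C,
\end{equation*}
and since $|(\nu_u)_n|\leq\varphi_\epsilon(\nu_u)$, letting $\epsilon\to 0$ produces $\int_{J_u}|(\nu_u)_n|\,\di\HH^{n-1}=0$, so $(\nu_u)_n=0$ $\HH^{n-1}$-a.e.\ on $J_u$.

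The hard part is this last step: the natural surface density $\nu\mapsto|\nu_n|$ degenerates on the horizontal equator $\{\nu_n=0\}$, and general $GSBD$ anisotropic lower-semicontinuity statements are typically formulated for densities bounded below by a positive constant. The approximation by $\varphi_\epsilon$ combined with the uniform bound on $\HH^{n-1}(J_{u_\rho})$ is what turns the degeneracy into a harmless $\epsilon$. Should the lower-semicontinuity with degenerate density not be directly available, a backup route is a slicing argument along $e_n$: the coarea identity
\begin{equation*}
\int_{J_v}|(\nu_v)_n|\,\di\HH^{n-1}=\int_\omega\HH^0\bigl(J_v\cap\pi_n^{-1}(y')\bigr)\,\di\HH^{n-1}(y')
\end{equation*}
combined with the $O(\rho^2)$-bound on $e_{n,n}(u_\rho)$ and a Fatou-type passage to the limit on a.e.\ slice in direction $e_n$ yields the same conclusion.
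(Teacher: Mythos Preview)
Your proposal is correct and follows essentially the same route as the paper: bound $e(u_\rho)$ in $L^2$ and $\HH^{n-1}(J_{u_\rho})$ from the coercivity of $\C$ and $\phi_\rho\ge 1$, invoke the $GSBD$ closedness theorem \cite[Theorem~11.3]{dal} to get $u\in GSBD^2$ and weak convergence of the strains, read off $e_{i,n}(u)=0$ from the scaling, and handle the degenerate surface density by regularizing and applying anisotropic lower semicontinuity. The only cosmetic difference is that the paper regularizes $|\nu_n|$ by the functional's own density $\phi_{\tilde\rho}$ at a frozen scale $\tilde\rho$ (using the monotonicity $\phi_{\tilde\rho}\le\phi_\rho$ for $\rho\le\tilde\rho$) rather than by your $\varphi_\epsilon=|\nu_n|+\epsilon|\nu|$, and then sends $\tilde\rho\to 0$; both choices accomplish the same thing.
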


\begin{proof}
From~\eqref{e:bound} we clearly deduce that~$e(u_{\rho})$ is bounded in~$L^{2}(\Om_1; \M^{n}_{s})$ and admits, up to a subsequence, a weak limit~$f \in L^{2}(\Om_1; \M^{n}_{s})$. Since $u_{\rho} \to u$ in measure in~$\Om_1$, from~\eqref{e:bound} we also deduce that $u \in GSBD^{2}(\Om_1)$ with $e(u) = f$. Arguing by slicing as in, e.g.,~\cite[Theorem~11.3]{dal}, we deduce that $u \in GSBD^{2}(\Om_{1})$ and that $e(u_{\rho}) \rightharpoonup e(u)$ weakly in~$L^{2}(\Om_{1}; \mathbb{M}^{n}_{s})$.
% \textcolor{blue}{In tutto il pezzo che segue in blu io rimarrei sul vago. Direi che il limite $u \in GSBD^2$ e $e(u_\rho) \rightharpoonup e(u)$ $w\text{-}L^2$ ragionando per slicing come al solito e si veda Teorema 11.3 Dal Maso:  Indeed, arguing by slicing we have that $(\hat{u}_{\rho})^{\xi}_{y} \to \hat{u}^{\xi}_{y}$ in measure for every $\xi \in \mathbb{S}^{n-1}$, for $\HH^{n-1}$-a.e.~$y \in \Pi^{\xi}$. Now, for every Borel subset $B$ of~$\Om_1$ we can simply estimate
%\begin{displaymath}
%\begin{split}
%\int_{\Pi^{\xi }}  \bigg ( \int_{ B^{\xi}_{y}} & | \nabla \hat{u}^{\xi}_{y} | (t) \, \di t + \HH^{0} ( J_{\hat{u}^{\xi}_{y}} )  \bigg ) \di \HH^{n-1} (y)
%\\
%&
% \leq \int_{\Pi^{\xi} } \bigg ( \Ll^{1} ( (\Om_{1})^{\xi}_{y} ) + \int_{(\Om_{1})^{\xi}_{y} \setminus J_{\hat{u}^{\xi}_{y}}}  | \nabla \hat{u}^{\xi}_{y} |^{2} (t) \, \di t + \HH^{0} ( J_{\hat{u}^{\xi}_{y}} ) \bigg) \, \di \HH^{n-1}(y)
% \\
% &
% \leq \int_{\Pi^{\xi}} C \bigg( 1 + \liminf_{\rho\to 0 } \int_{(\Om_{1})^{\xi}_{y} }  | \nabla (\hat{u}_{\rho})^{\xi}_{y}  |^{2} (t) \, \di t + \HH^{0} ( J_{(\hat{u}_{\rho})^{\xi}_{y} } ) \bigg) \, \di \HH^{n-1}(y) \bigg) \,,
%\end{split}
%\end{displaymath}
%for some positive constant~$C$ independent of~$k$ and~$\xi$. The last term in the previous inequality can be bounded in terms of~\eqref{e:bound} and we deduce that $u \in GSBD^{2}(\Om_1)$ and $e(u_{\rho}) \rightharpoonup e(u)$ weakly in~$L^{2}(\Om_{1} ; \mathbb{M}^{n}_{s})$}.

By definition of~$\E_{\rho}$ we have that
\begin{displaymath}
\| e_{\alpha,  n} (u_{\rho}) \|_{2}^{2} = \rho^{2} \| e^{\rho}_{ \alpha,  n} (u_{\rho}) \|_{2}^{2} \leq \rho^{2} \E_{\rho}(u_{\rho})
\end{displaymath}
and similarly $\| e_{n,n} (u_{\rho}) \|_{2}^{2} \leq \rho^{4} \E_{\rho} (u_{\rho})$. Hence,~\eqref{e:bound} implies that $e_{i,n} (u_{\rho}) \to 0$ in $L^{2}(\Om_{1} ; \M^{n}_{s})$, from which we deduce that $e_{i,n} (u) = 0$ for $i = 1, \ldots, n$.

Finally, for every $\tilde{\rho}>0$ we have that
\begin{align*}
\frac{1}{\tilde{\rho}} \int_{J_{u}} | ( \nu_{u})_{n} | \, \di \HH^{n-1} & \leq \int_{J_{u}} \phi_{\tilde{\rho}}( \nu_{u}) \, \di \HH^{n-1} \leq \liminf_{\rho \to 0} \int_{J_{u_{\rho}}} \phi_{\tilde\rho}(\nu_{u_{\rho}}) \, \di \HH^{n-1} 
\\
&
\leq \liminf_{\rho \to 0}  \int_{J_{u_{\rho}}} \phi_{\rho}(\nu_{u_{\rho}}) \, \di \HH^{n-1}  \leq \liminf_{\rho \to 0} \E_{\rho} (u_{\rho})\,.
\end{align*}
Letting~$\tilde{\rho} \to 0$ in the previous inequality and using again~\eqref{e:bound} we infer that $(\nu_{u})_{n} = 0$ $\HH^{n-1}$-a.e.~on~$J_{u}$.
\end{proof}

In view of Proposition~\ref{p:compactness}, we expect the limit functional to be defined on the space \begin{align}\label{e:KL}
\mathcal{KL} (\Om_{1}) := \{ u \in GSBD^{2}(\Om_{1}): & \, e_{i, n} (u) = 0 \text{ in~$\Om_{1}$ for $i = 1, \ldots, n$,}
\\ 
&
\,\text{and $(\nu_{u})_{n} = 0$ on~$J_{u}$}\}\,. \nonumber
\end{align}
We further denote by~$\mathcal{KL}(U)$ the same space defined on a generic open subset~$U$ of~$\R^{n}$. 

The description of~$\mathcal{KL}(\Om_{1})$ is completed by the following theorem, which therefore collects the properties of the functions~$u \in GSBD^{2}(\Om_{1})$ obtained as limits of sequences~$u_{\rho}$ equi-bounded in energy. The proof of the theorem is given in Section~\ref{s:proofs} (see, in particular, Propositions~\ref{p:badcube}--\ref{p:KL2}).

\begin{theorem}
\label{t:KL}
Let $u \in \mathcal{KL}(\Om_{1})$.
%GSBD^2(\Om_{1})$ be such that $\mathcal{H}^{n-1}(J_u) < \infty$, $e_{i,n}(u) =0$ for $i = 1, \ldots, n$, and $(\nu_u)_{n} = 0$ $\HH^{n-1}$-a.e.~on~$J_u$. 
Then, the following facts hold:
\begin{itemize}
    \item[$(i)$] $u_n$ does not depend on $x_n$ and it is approximately differentiable for $\mathcal{H}^{n-1}$-a.e.~$x' \in \omega$. Moreover, denoting by~$\nabla u_n$ its approximate gradient, we have $\nabla u_n \in GSBD^2(\omega)$;
    \item[$(ii)$]  for $\mathcal{L}^n$-a.e.~$(x',x_n) \in \Om_{1}$ we have
    \begin{equation}
        \label{formuladd}
        u_\alpha(x',x_n) = \overline{u}_{\alpha}(x') - x_n \partial_\alpha u_n(x'), \ \qquad\alpha = 1, \dotsc, n-1,
    \end{equation}
    where $\overline{u}_\alpha (x'):= \int_{- 1/2}^{1/2} u_\alpha(x',x_n) \, \di x_n$ and $\overline{u} := (\overline{u}_1, \dotsc , \overline{u}_{n-1}) \in GSBD^2(\omega)$;
     \item[$(iii)$]  $J_{u} = (J_{\overline{u}} \cup J_{u_n} \cup J_{\nabla u_n} ) \times \big( -\frac12,\frac12 \big)$.
     % \textcolor{blue}{(ma \'e veramente necessario provare quest'ultimo punto? non basterebbe sapere $J_u = \Gamma' \times (0,1)$? Mi pare che l'unico punto in cui lo si utilizza è nella scrittura della parte di salto del $\Gamma$-limite.)} {\color{red} Sono d'accordo, serve solo per scrivere uno spazio al limite che dipenda dalla sola $u$. Per\`o in qualche senso ti dice che la legge~\eqref{formuladd} di Kirchhoff vale anche per il salto~$J_u$.}
\end{itemize}
\end{theorem}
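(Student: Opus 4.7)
The plan is to derive the Kirchhoff--Love structure of $u$ by approximation. Since $u \in \mathcal{KL}(\Omega_1)$ and $(\nu_u)_n = 0$ on $J_u$, standard slicing yields $\mathcal{H}^{n-1}(\pi_n(J_u)) = 0$; however, pointwise manipulation of the constraints $e_{i,n}(u) = 0$ directly on $u$ is delicate, since the approximate symmetric gradient only lives in $L^2$ and the distributional $\e u$ is not a bounded measure. The strategy is therefore to construct an approximating sequence $u_k$ which is $W^{1,\infty}$ off the closure of its jump set, whose jump set still projects to a small set under $\pi_n$, and which retains the structural constraints $e_{i,n}(u_k) \to 0$ in $L^2$.

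First, I would construct this approximation (Proposition~\ref{app}), in the spirit of \cite{cha, iur}: partition $\Omega_1$ into small cubes, discard the ``bad'' ones carrying too much jump (Proposition~\ref{p:badcube}), and on the remaining good cubes modify $u$ so that it becomes Lipschitz while agreeing with $u$ on a set of large measure. A key additional feature is that the modification must act by affine extension in the $x_n$-direction, so that on good cubes $(u_k)_n$ is independent of $x_n$ (compatible with $\partial_n (u_k)_n = 0$) and the bad set where the modification takes place is small enough to guarantee $\mathcal{H}^{n-1}(\pi_n(\overline{J_{u_k}})) \to \mathcal{H}^{n-1}(\pi_n(J_u)) = 0$.

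With $u_k$ at hand, I apply the fundamental theorem of calculus along segments parallel to $e_n$ lying over $\omega \setminus \pi_n(\overline{J_{u_k}})$: there $u_k$ is $C^1$, and $\partial_n (u_k)_n = 0$ gives $(u_k)_n$ independent of $x_n$, a property that survives passage to the limit using $u_k \to u$ in measure and Proposition~\ref{vj}. This proves (i), while the existence of $\nabla u_n$ and its $GSBD^2$-regularity on $\omega$ follows from the slicing characterization of $GSBD$ in \cite{dal} applied to the trace of $u_n$ at any fixed height. For (ii), the same FTC argument applied to $(u_k)_\alpha$, combined with $2 e_{\alpha,n}(u_k) = \partial_\alpha (u_k)_n + \partial_n (u_k)_\alpha \to 0$ in $L^2$, yields in the limit $\partial_n u_\alpha = -\partial_\alpha u_n$ along $\mathcal{H}^{n-1}$-a.e.\ vertical fiber; integrating in $x_n$ gives $u_\alpha(x', x_n) = u_\alpha(x', 0) - x_n \partial_\alpha u_n(x')$, and averaging in $x_n$ identifies the $x_n$-independent part as $\overline{u}_\alpha$, proving~\eqref{formuladd} and yielding $\overline{u} \in GSBD^2(\omega)$.

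For (iii), the explicit formula~\eqref{formuladd} forces every jump of $u$ at $(x', x_n)$ to originate from a jump of $\overline{u}$, $u_n$, or $\nabla u_n$ at $x'$, whence $J_u \subseteq (J_{\overline{u}} \cup J_{u_n} \cup J_{\nabla u_n}) \times (-\tfrac12, \tfrac12)$; the reverse inclusion is immediate since such $(n-2)$-dimensional jumps on $\omega$ propagate along every height $x_n$. The main obstacle is the first step: constructing an approximation that simultaneously preserves the $x_n$-independence of $(u_k)_n$ on the good region, controls $\pi_n(\overline{J_{u_k}})$ by $\pi_n(J_u)$ up to arbitrarily small error, and is regular enough for the classical FTC to apply. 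This is where the combinatorics of bad cubes (Proposition~\ref{p:badcube}) and the geometric transfer of Proposition~\ref{vj} become crucial, since generic $GSBD$ approximation results would lose precisely the structural constraints needed to pass from $u_k$ to $u$.
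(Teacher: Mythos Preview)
Your overall strategy matches the paper's: build an approximation $u_k \in W^{1,\infty}$ off its jump with $\mathcal{H}^{n-1}(\pi_n(\overline{J_{u_k}})) \to 0$ (Propositions~\ref{p:badcube} and~\ref{app}), apply the fundamental theorem of calculus on vertical segments, and pass to the limit. However, there is a genuine gap in how you obtain the approximate differentiability of $u_n$.

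You write that ``the existence of $\nabla u_n$ and its $GSBD^2$-regularity on $\omega$ follows from the slicing characterization of $GSBD$ in \cite{dal} applied to the trace of $u_n$ at any fixed height.'' This does not work: for $u \in GSBD^2(\Omega_1)$ only the approximate \emph{symmetric} gradient exists a.e., and the slices of $u$ in the sense of \cite{dal} give information about $\hat u^\xi_y = u \cdot \xi$ along lines in direction $\xi$, not about the single component $u_n$ in directions $e_\alpha$. In particular one cannot conclude $u_n \in GSBV(\omega)$ (the paper explicitly notes this in the introduction), so the approximate gradient of $u_n$ is not available for free. Relatedly, in your derivation of~\eqref{formuladd} you invoke $\partial_\alpha u_n$ before it is known to exist; the FTC argument on $u_k$ only produces a limit object $\psi_\alpha$ defined through difference quotients (this is the content of Proposition~\ref{p:KL2}), and the identification $\psi_\alpha = \partial_\alpha u_n$ is a separate step.

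The paper closes this gap by an algebraic trick you are missing: for $\xi = (e_\alpha + e_n)/\sqrt{2}$, the directional derivative $\partial_\xi(u \cdot \xi) = e(u)\xi \cdot \xi$ exists a.e.\ by $GSBD$ theory, and one has the identity
\[
\partial_\alpha u_n = 2\,\partial_\xi(u \cdot \xi) - \partial_\alpha u_\alpha - \partial_n u_\alpha
\]
(equation~\eqref{appgra}), where the right-hand side is already known to exist a.e.\ thanks to $e_{\alpha,\alpha}(u)$ and the affine-in-$x_n$ formula from Proposition~\ref{p:KL2}. Only then does one invoke \cite[Theorem~3.1.4]{fed} to upgrade existence of approximate partials to approximate differentiability, and finally identify $\nabla u_n = \psi \in GSBD^2(\omega)$ via Proposition~\ref{vj}. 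Your sketch of $(iii)$ is also too quick: the inclusion $J_{\overline u} \cup J_{u_n} \cup J_{\nabla u_n} \subset \Gamma'$ is \emph{not} immediate, because at points where $\overline u$ and $\nabla u_n$ both jump the two contributions can cancel at some heights $x_n$; the paper handles this with a case analysis on tangent planes (see the argument around~\eqref{fi}--\eqref{si}).
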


\begin{remark}\label{r:KL}
In view of Theorem~\ref{t:KL} we have that the space~$\mathcal{KL}(\Om_{1})$ in~\eqref{e:KL} is $(n-1)$-dimensional in nature, as the out of plane component~$u_{n}$ only depends on the planar coordinates~$x'$, while the planar components~$u_{\alpha}$, $\alpha = 1, \ldots, n-1$ depends linearly on~$x_{n}$ through~\eqref{formuladd}. However, the approximate symmetric gradient~$e(u) \in \M^{n}_{s}$ can be identified with an element of~$\M^{n-1}_{s}$, since the $n$-th column and the $n$-th row are zero. The structure highlighted in Theorem~\ref{t:KL} is typical of the so called \emph{Kirchoff-Love} plate, which appears in many dimension reduction problems in elasticity. 
\end{remark}

% We are now in a position to introduce the $\Gamma$-limit of~$\E_{\rho}$. To this purpose, we define the space
%\begin{align}
%\mathcal{KL}(\Om) := \bigg\{ u \in & \ GSBD^{2}(\Om): \, \text{$e_{i,n}(u) = 0$ $i=1, \ldots, n$, $( \nu_{u})_{n} = 0$,}\\
%& \text{$u_{n}$ is independent of~$x_{n}$, } \nonumber \\
%& u_{\alpha} (x', x_{n}) = \overline{u}_{\alpha}(x') + \bigg( \frac{1}{2} - x_n \bigg) \partial_\alpha u_n (x'), \text{ $\alpha = 1, \ldots, n-1$, } \nonumber\\
%&  \overline{u}_{\alpha} (x') = \int_{-\frac12}^{\frac12} u_{\alpha} (x', x_{n}) \, \di x_{n}, \nonumber \\
%& \nabla u_{n} \in GSBD^2( \om),\, J_u = (J_{\overline{u}} \cup J_{u_{n}} \cup J_{\nabla{u_n}}) \times \Big(-\frac12, \frac12 \Big) \bigg\}\,. \nonumber 
%\end{align}

%|\textcolor{red}{Is it true that $\partial_{\alpha} \overline{u}_{\alpha} , \partial_{\alpha} ( \partial_{\alpha} u_{n}) \in L^{2}(\om)$? I think yes because the two functions are independent of~$x_{n}$, $e_{\alpha, \alpha}( x', x_{n}) \in L^{2}(\om)$ for a.e.~$x_{n} \in (0,1)$, and $u_{\alpha} = \overline{u}_{\alpha} + (1/2 - x_{n}) \partial_{\alpha} u_{n}$.}

In view of Remark~\ref{r:KL}, it is convenient to introduce the following reduced linear elasticity tensor:
\begin{equation*}
\C_{0} \e {\, \cdot\,} \e := \min_{\xi \in \R^{n}} \C \e_{\xi} {\, \cdot\,} \e_{\xi} \qquad \text{for every $\e \in \M^{n-1}_{s}$}\,,
\end{equation*}
where for every $\xi \in \R^{n}$ we have set
\begin{displaymath}
\e_{\xi} := \left( \begin{array}{ccc|c}
e_{1,1} & \cdots & e_{1, n-1} & \xi_{1} \\
\vdots & \ddots & \vdots & \vdots \\
e_{n-1, 1} & \cdots & e_{n-1, n-1} & \xi_{n-1} \\ \hline
\xi_{1} & \cdots & \xi_{n-1} & \xi_{n}
\end{array}
\right)
% \left( \begin{array}{c c | c}
%\e_{11} & \e_{12} & \xi_{1} \\
% \e_{21}  & \e_{22} & \xi_{2} \\\hline 
%\xi_{1} & \xi_{2} & \xi_{3}
%\end{array} \right).
\end{displaymath}
By a direct computation we deduce that
\begin{displaymath}
\C_{0} \e {\, \cdot\,} \e = \frac{2 \lambda \mu}{\lambda + 2\mu}( tr(\e))^{2} + 2\mu |\e|^{2}.
\end{displaymath}

With this notation at hand, the $\Gamma$-limit of~$\E_{\rho}$ writes
\begin{equation*}
\E_{0} (u) := \left\{ \begin{array}{ll}
\displaystyle \frac{1}{2} \int_{\Om} \C_{0} e(u){\, \cdot\,} e(u) \, \di x + \HH^{n-1} ( J_{u})  & \text{if $u \in \mathcal{KL}(\Om_{1})$,}\\[2mm]
\displaystyle \vphantom{\int} + \infty & \text{otherwise in~$L^0(\Om_{1})$,}
\end{array}\right.
\end{equation*}
and we have the following convergence result.

\begin{theorem}\label{t:limit}
The sequence~$\E_{\rho}$ $\Gamma$-converges to~$\E_{0}$ w.r.t.~the topology induced by the convergence in measure.
\end{theorem}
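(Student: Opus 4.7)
The plan is to establish the two standard $\Gamma$-convergence inequalities.

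For the $\liminf$ inequality, I would take $u_{\rho}\to u$ in measure with $\liminf_{\rho}\E_{\rho}(u_{\rho})<+\infty$ and pass to a subsequence of bounded energy. Proposition~\ref{p:compactness} immediately gives $u\in\mathcal{KL}(\Om_{1})$, the weak $L^{2}$-convergence $e(u_{\rho})\rightharpoonup e(u)$, and $e_{i,n}(u_{\rho})\to 0$ in $L^{2}$. The bulk lower bound should then follow from: coercivity of $\C$ yielding an $L^{2}$-bound on the full rescaled strain $e^{\rho}(u_{\rho})$; extraction of a weak limit $\hat e$ whose planar block coincides with $e(u)$ by uniqueness of weak limits; weak $L^{2}$-lower semicontinuity of the convex functional $E\mapsto\frac{1}{2}\int_{\Om_{1}}\C E\cdot E\,\di x$; and the pointwise bound $\C\hat e\cdot\hat e\ge \C_{0}e(u)\cdot e(u)$ built into the definition of $\C_{0}$. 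For the surface part, I would exploit the monotonicity $\phi_{\rho}\ge\phi_{\tilde\rho}$ for $\rho<\tilde\rho$ together with the $GSBD$ lower semicontinuity of surface functionals with convex anisotropic density $\phi_{\tilde\rho}$; sending first $\rho\to 0$ and then $\tilde\rho\to 0$, and using $(\nu_{u})_{n}=0$ which forces $\phi_{\tilde\rho}(\nu_{u})=1$ on $J_{u}$, I recover exactly $\HH^{n-1}(J_{u})$.

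For the recovery sequence, given $u\in\mathcal{KL}(\Om_{1})$ with $\E_{0}(u)<+\infty$, my first step is a density argument: approximate $u$ by $u^{k}\in\mathcal{KL}(\Om_{1})$ whose planar average $\overline{u^{k}}$ and out-of-plane component $u^{k}_{n}$ are piecewise $W^{1,\infty}$ with closed rectifiable jump sets in $\om$, and such that $u^{k}\to u$ in measure, $e(u^{k})\to e(u)$ in $L^{2}$, and $\HH^{n-1}(J_{u^{k}})\to\HH^{n-1}(J_{u})$. This is exactly the approximation delivered by Proposition~\ref{app} combined with the Kirchhoff-Love structure of Theorem~\ref{t:KL}. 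The minimisation defining $\C_{0}$ is attained at $\xi_{\alpha}=0$ and $\xi_{n}=-\tfrac{\lambda}{\lambda+2\mu}\,tr\,e(u)$, which suggests the ansatz
\begin{displaymath}
u^{k}_{\rho}:=\bigl(u^{k}_{1},\dots,u^{k}_{n-1},\ u^{k}_{n}+\rho^{2}w^{k}\bigr),
\end{displaymath}
where $w^{k}(x',x_{n})$ is chosen smooth in $x_{n}$ with $\partial_{n}w^{k}=-\tfrac{\lambda}{\lambda+2\mu}\,tr\,e(u^{k})$ (which has an explicit polynomial-in-$x_{n}$ expression coming from the Kirchhoff-Love structure). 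A direct computation then gives $e^{\rho}_{\alpha\beta}(u^{k}_{\rho})=e_{\alpha\beta}(u^{k})$, $e^{\rho}_{\alpha n}(u^{k}_{\rho})=O(\rho)$, and $e^{\rho}_{nn}(u^{k}_{\rho})\to -\tfrac{\lambda}{\lambda+2\mu}\,tr\,e(u^{k})$, so the rescaled bulk integrand converges pointwise and in $L^{1}$ to $\C_{0}e(u^{k})\cdot e(u^{k})$. Since $w^{k}$ is smooth across the closed jump set of $u^{k}$, I expect $J_{u^{k}_{\rho}}=J_{u^{k}}$ with $\nu_{u^{k}_{\rho}}=\nu_{u^{k}}$ having vanishing $n$-th component; hence $\phi_{\rho}(\nu_{u^{k}_{\rho}})=1$ on $J_{u^{k}}$ and the surface energy equals $\HH^{n-1}(J_{u^{k}})$ for every $\rho$. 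Thus $\E_{\rho}(u^{k}_{\rho})\to\E_{0}(u^{k})\to\E_{0}(u)$, and a standard diagonal extraction $\rho_{k}\to 0$ produces $u^{k}_{\rho_{k}}\to u$ in measure realising the $\limsup$.

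The hard part will not be the two computations above but the two structural inputs underlying them: first, the $GSBD$ lower semicontinuity of the anisotropic surface integral along a sequence converging only in measure (required in the $\liminf$); and second, the density step preserving the Kirchhoff-Love product structure so that the explicit corrector is admissible and its jump set is contained in that of $u^{k}$ (required in the $\limsup$). Both rely on earlier material of the paper, notably Proposition~\ref{app} and Theorem~\ref{t:KL}; once these are in place, Theorem~\ref{t:limit} reduces to assembling them with the elementary computations sketched above.
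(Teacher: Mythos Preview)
Your $\liminf$ argument is correct and essentially identical to the paper's: Proposition~\ref{p:compactness} handles the structural part, the pointwise inequality $\C e^{\rho}(v){\,\cdot\,}e^{\rho}(v)\ge \C_{0}\overline{e}(v){\,\cdot\,}\overline{e}(v)$ (with $\overline{e}(v):=(e_{\alpha\beta}(v))$) plus convex lower semicontinuity gives the bulk bound, and the monotone family $\phi_{\tilde\rho}$ plus $(\nu_{u})_{n}=0$ gives the surface bound.

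Your $\limsup$ strategy, however, diverges from the paper and contains a genuine gap. You propose a two-step scheme: first approximate $u\in\mathcal{KL}(\Om_{1})$ by piecewise $W^{1,\infty}$ functions $u^{k}\in\mathcal{KL}(\Om_{1})$ with $\HH^{n-1}(J_{u^{k}})\to\HH^{n-1}(J_{u})$, then build the corrector $w^{k}$ from $tr\,e(u^{k})$. Two problems arise. First, Proposition~\ref{app} does \emph{not} deliver convergence of the jump measure (only a uniform bound, see~\eqref{e:jump-bounded}), nor does it obviously yield approximants in $\mathcal{KL}(\Om_{1})$; the density step you need is therefore not available in the paper. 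Second, and more seriously, your computation $e^{\rho}_{\alpha n}(u^{k}_{\rho})=\tfrac{\rho}{2}\partial_{\alpha}w^{k}$ requires $\partial_{\alpha}w^{k}\in L^{2}$, i.e.\ a horizontal derivative of $tr\,e(u^{k})$. For Iurlano/Chambolle-type interpolants, $e(u^{k})$ jumps across \emph{all} cube faces (not just the bad-cube boundary), so $w^{k}$ inherits these discontinuities, $J_{u^{k}_{\rho}}$ acquires the whole grid, and the surface energy blows up like $h_{k}^{-1}$. Piecewise $W^{1,\infty}$ regularity of $u^{k}$ is simply not enough.

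The paper bypasses the density step entirely. The observation is that, by Theorem~\ref{t:KL}, $\overline{u}\in GSBD^{2}(\omega)$ and $\nabla u_{n}\in GSBD^{2}(\omega)$, so the two scalar functions
\[
-\tfrac{\lambda}{\lambda+2\mu}\sum_{\alpha}\partial_{\alpha}\overline{u}_{\alpha}
\qquad\text{and}\qquad
-\tfrac{\lambda}{\lambda+2\mu}\sum_{\alpha}\partial_{\alpha}(\partial_{\alpha}u_{n})
\]
lie in $L^{2}(\omega)$. One approximates \emph{these} in $L^{2}(\omega)$ by $h_{\rho,1},h_{\rho,2}\in C^{\infty}_{c}(\omega)$ chosen so that additionally $\rho\,\nabla h_{\rho,i}\to 0$ in $L^{2}$, and sets
\[
u_{\rho}(x):=u(x)+\Big(0,\ldots,0,\ \rho^{2}x_{n}\big[h_{\rho,1}(x')-\tfrac{x_{n}}{2}h_{\rho,2}(x')\big]\Big).
\]
Because the correction is globally smooth in $x'$, one gets $J_{u_{\rho}}=J_{u}$ with $(\nu_{u_{\rho}})_{n}=0$ exactly, hence $\phi_{\rho}(\nu_{u_{\rho}})\equiv 1$ and the surface term is constant. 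The off-diagonal strain is $e^{\rho}_{\alpha n}(u_{\rho})=\tfrac{\rho}{2}x_{n}[\partial_{\alpha}h_{\rho,1}-\tfrac{x_{n}}{2}\partial_{\alpha}h_{\rho,2}]\to 0$ in $L^{2}$ precisely because of the side condition $\rho\,\nabla h_{\rho,i}\to 0$, and $e^{\rho}_{nn}(u_{\rho})=h_{\rho,1}-x_{n}h_{\rho,2}$ converges in $L^{2}$ to the optimal profile. A direct computation then gives $\E_{\rho}(u_{\rho})\to\E_{0}(u)$, with no diagonal argument needed. The moral is: approximate the \emph{optimal profile} (an $L^{2}$ object) by smooth functions, not the displacement $u$ itself.
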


The proof of Theorem~\ref{t:limit} is given in Section~\ref{s:proofs}.

%In the following theorem we collect the properties of the functions~$u \in GSBD^{2}(\Om)$ obtained as limits of sequences~$u_{\rho}$ equi-bounded in energy. In order to state our result, we need to define the jump set of the $n$-th component of~$u$. As one can see from the statement of Theorem~\ref{t:KL} below,~$u_{n}$ does not have bounded variation (neither generalized bounded variation). Therefore, we specify~$J_{u_{n}}$ as follows.
%
%\begin{definition}
%\label{dju3}
%Let $u \in GSBD^{2}(\Om)$. We define $J_{u_n} := \{x \in J_u \ | \ [u_n](x) \neq 0 \}$.
%\end{definition}

\section{Proofs of Theorems~\ref{t:KL} and~\ref{t:limit}}\label{s:proofs}

We start by proving Theorem~\ref{t:KL}. Its proof is articulated in the next four propositions. The first two give an approximation result in the spirit of~\cite[Section 4, Theorem 1]{cha} and~\cite[Theorem~5]{iur}. The last two, instead, provide intermediate results for the proof of items $(i)$--$(iii)$ of Theorem~\ref{t:KL}.

%In order to clarify the proof of the next Proposition let us introduce some terminology and some notation. As usual when dealing with $GSBD$ functions, given $\xi \in \mathbb{R}^n$ with $|\xi| =1$ we denote by~$\Pi^\xi$ the $(n-1)$-dimensional hyper-plane orthogonal to $\xi$. Moreover, given a set $B \subset \mathbb{R}^n$ and a function $u \colon B \to \mathbb{R}^n$, we set $B_y^\xi := \{t \in \mathbb{R} \ | \ y +t\xi \in B  \}$ and define $\hat{u}_y^\xi \colon B_y^\xi \to \mathbb{R}$ as $\hat{u}_y^\xi(t) := u(y + t\xi) \cdot \xi$. 

We now recall the definition of good/bad hyper-cubes of an $(n-1)$-dimensional grid of~$\mathbb{R}^n$ in relation with a rectifiable set with finite $(n-1)$-dimensional Hausdorff measure.

\begin{definition}
Let $h \in \mathbb{R}^+$. The $(n-1)$-dimensional $h$-grid~$\mathcal{Q}_h^0$ centered at zero and parallel to the coordinate axis is defined as
\[
\mathcal{Q}_{h}^0 := \bigcup_{i=1}^n \,  \bigcup_{z \in h \mathbb{Z}} \{ x_i = z \}  \,.
\]
A generic $(n-1)$-dimensional $h$-grid $\mathcal{Q}_h$ parallel to the coordinate axis is obtained simply by translating of a generic vector $y \in [0,1)^n$, i.e., $\mathcal{Q}_h = \mathcal{Q}_{h}^0 +hy$. 

We say that~$Q$ is a hyper-cube of $\mathcal{Q}_h = \mathcal{Q}_h^0 +hy$ if there exists $z \in h\mathbb{Z}^n$ such that 
\[
Q = z + hy + (0,h)^n.
\]

\end{definition}

\begin{definition}\label{d:badcube}
Let $\Gamma \subset \mathbb{R}^n$ be a countably $(\mathcal{H}^{n-1},n-1)$-rectifiable set with $\mathcal{H}^{n-1}(\Gamma) <\infty$. For every $y \in \mathbb{R}^n$ we introduce the {\em directional half-neighborhood}~$J^y$ of~$\Gamma$
\[
J^y := \bigcup_{x \in \Gamma} [x, x-y] \,.
\]
Set $D := \{e_i, \ e_i \pm e_j, \  i,j =1, \dotsc,n, \ i\neq j \}$.  By using the terminology of \cite{iur}, given an $(n-1)$-dimensional $h$-grid~$\mathcal{Q}_h$, we say that a hyper-cube $Q =  z +hy +(0,h)^n$ of~$\mathcal{Q}_h$ is a \emph{bad hyper-cube} relative to~$\Gamma$ if there exist $e \in D$ and $\eta \in \{0,1 \}^n$ such that
\[
\begin{cases}
 z + hy+ h\eta \in J^{he}, \text{ with } \eta_i=0, &\text{ if } e=e_i \,, \\
 z + hy + h\eta \in J^{he}, \text{ with } \eta_i=\eta_j=0, &\text{ if } e=e_i + e_j \,,\\
 z + hy + h\eta + he_j \in J^{he}, \text{ with } \eta_i=\eta_j=0, &\text{ if } e=e_i - e_j \,.
\end{cases}
\]
Otherwise, we say that a hyper-cube of~$\mathcal{Q}_h$ is a \emph{good hyper-cube} relative to~$\Gamma$.
\end{definition}

The following proposition provides an estimate of the $\HH^{n-1}$-measure of the boundaries of the bad hyper-cubes, which will be useful in view of the approximating result of Proposition~\ref{app}.

\begin{proposition}
\label{p:badcube}
Let $\Gamma \subset \mathbb{R}^n$ be a countably $(\mathcal{H}^{n-1},n-1)$-rectifiable set with $\mathcal{H}^{n-1}(\Gamma) <\infty$, and let~$\Gamma_{j}$ be a sequence of measurable sets such that
\[
\Gamma_j \subset \Gamma \text{ for every }j \in \mathbb{N} \ \ \ \text{and} \ \ \ \sum_{j=1}^\infty \mathcal{H}^{n-1}(\Gamma_j) = L < \infty \,.
\]
Moreover, for every $j \in \mathbb{N}$, every $h>0$, and every $y \in [0,1)^{n}$, let ~$\mathcal{B}_{h,j,y}$ be the family of bad hyper-cubes of~$\mathcal{Q}_{h}^0 + hy$ relative to~$\Gamma_j$ and define
\begin{equation}\label{e:A}
A_{h,j} := \bigcup_{Q \in \mathcal{B}_{h,j,y}} Q \,.
\end{equation}
Then, for every $\delta >0$ there exists a subset $H \subset (0,1)^n$ with $\mathcal{L}^{n}((0,1)^n \setminus H) \leq \delta$ for which for every $y \in H$ there exist a sequence $h_k \searrow 0$ and a sequence $j_m \nearrow  \infty$ such that
\begin{align}   \label{e:badcube2}
&\limsup_{k \to \infty} \, \mathcal{H}^{n-1}(\partial A_{h_k,j_m}) <+\infty \qquad \text{for every $m$}\,,
\\
&
 \lim_{m \to \infty} \, \limsup_{k \to \infty} \, \mathcal{H}^{n-1}(\partial A_{h_k,j_m}) = 0 \,.    \label{e:badcube1}
\end{align}
\end{proposition}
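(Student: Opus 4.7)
The strategy is to bound the boundary $\mathcal{H}^{n-1}$-measure of the union of bad hyper-cubes in terms of tube measures, average these tube measures over the grid shift $y\in[0,1)^n$, and then extract sequences by a combination of Markov's inequality, Fatou's lemma and a diagonal argument.

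\medskip

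\textbf{Step 1: Reduction to a tube estimate.} Since each hyper-cube of $\mathcal{Q}_h^0+hy$ has $2n$ faces of $(n-1)$-area $h^{n-1}$, we have the trivial bound
\[
\mathcal{H}^{n-1}(\partial A_{h,j,y}) \le 2n\, h^{n-1}\,\#\mathcal{B}_{h,j,y}.
\]
By Definition~\ref{d:badcube}, every bad hyper-cube is charged by at least one pair $(e,\eta)\in D\times\{0,1\}^n$, so $\#\mathcal{B}_{h,j,y}$ is dominated by the sum over $(e,\eta)$ of $\#\{z\in h\mathbb{Z}^n: z+hy+h\eta\in J^{he}(\Gamma_j)\}$ (up to a harmless shift when $e=e_i-e_j$), where $J^{he}(\Gamma_j):=\bigcup_{x\in\Gamma_j}[x,x-he]$.

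\medskip

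\textbf{Step 2: Averaging in $y$.} By Fubini and the change of variable $y\mapsto z+hy+h\eta$, for each fixed $(e,\eta)$,
\[
\int_{[0,1)^n}\#\{z\in h\mathbb{Z}^n: z+hy+h\eta\in J^{he}(\Gamma_j)\}\,dy \;=\; \frac{1}{h^n}\,\mathcal{L}^n\!\bigl(J^{he}(\Gamma_j)\bigr),
\]
because the translates $z+h\eta+h[0,1)^n$ tile $\mathbb{R}^n$ as $z$ ranges over $h\mathbb{Z}^n$. The set $J^{he}(\Gamma_j)$ is the image of $\Gamma_j\times[0,1]$ under the Lipschitz map $(x,t)\mapsto x-the$, and a direct application of the area formula using the rectifiability of $\Gamma_j$ yields
\[
\mathcal{L}^n\!\bigl(J^{he}(\Gamma_j)\bigr)\le h\,|e|\,\mathcal{H}^{n-1}(\Gamma_j).
\]
Combining these estimates with Step~1 and summing over the finite set $D\times\{0,1\}^n$ gives a constant $C=C(n)$ such that, uniformly in $h>0$,
\[
\int_{[0,1)^n}\mathcal{H}^{n-1}(\partial A_{h,j,y})\,dy \;\le\; C\,\mathcal{H}^{n-1}(\Gamma_j).
\]
Summing over $j$ and using $\sum_j\mathcal{H}^{n-1}(\Gamma_j)=L$ yields $\int_{[0,1)^n}\sum_j\mathcal{H}^{n-1}(\partial A_{h,j,y})\,dy\le CL$ uniformly in $h$.

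\medskip

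\textbf{Step 3: Extracting $H$, $h_k$ and $j_m$.} Applying Fatou's lemma to any sequence $h\searrow 0$ yields
\[
\int_{[0,1)^n}\liminf_{h\to 0}\sum_{j=1}^\infty \mathcal{H}^{n-1}(\partial A_{h,j,y})\,dy \;\le\; CL.
\]
Given $\delta>0$, set $M:=CL/\delta$ and
\[
H:=\Bigl\{y\in(0,1)^n:\ \liminf_{h\to 0}\sum_j \mathcal{H}^{n-1}(\partial A_{h,j,y})\le M\Bigr\};
\]
by Markov's inequality $\mathcal{L}^n((0,1)^n\setminus H)\le\delta$. For each $y\in H$ choose $h_k\searrow 0$ (thinning to guarantee strict monotonicity) with $\sum_j \mathcal{H}^{n-1}(\partial A_{h_k,j,y})\le 2M$ for every $k$. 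A standard diagonal extraction (further thinning $h_k$) gives $\mathcal{H}^{n-1}(\partial A_{h_k,j,y})\to\phi_j(y)\in[0,2M]$ for every $j$. Fatou in the variable $j$ yields
\[
\sum_{j=1}^\infty \phi_j(y) \;\le\; \liminf_{k\to\infty}\sum_j \mathcal{H}^{n-1}(\partial A_{h_k,j,y}) \;\le\; 2M<\infty,
\]
so $\phi_j(y)\to 0$ and one may pick $j_m\nearrow\infty$ with $\phi_{j_m}(y)<1/m$. Then $\limsup_k \mathcal{H}^{n-1}(\partial A_{h_k,j_m,y})=\phi_{j_m}(y)$, which gives \eqref{e:badcube2} and \eqref{e:badcube1}.

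\medskip

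\textbf{Main obstacle.} The technical heart of the argument is Step~2: one must produce a constant $C$ depending only on $n$ in the averaged estimate. The clean form of this bound hinges on both the tiling property that turns the Fubini integral into $h^{-n}\mathcal{L}^n(J^{he})$ and on the rectifiability of $\Gamma_j$ to apply the area formula to the tube $J^{he}$; once this uniform-in-$h$ bound is in place, Steps~1 and~3 are essentially measure-theoretic bookkeeping.
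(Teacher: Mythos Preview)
Your proof is correct and follows essentially the same route as the paper: both bound the number of bad cubes by a discrete tube-type energy, average over the grid shift $y$ via Fubini to get a uniform-in-$h$ estimate by $C\,\mathcal{H}^{n-1}(\Gamma_j)$, then use Fatou and Chebyshev to produce~$H$ and a diagonal argument to extract~$h_k$ and~$j_m$. The only cosmetic difference is that the paper organizes Step~3 by iteratively choosing $j_m$ and then extracting a subsequence of~$h_k$ at each stage, whereas you first diagonalize so that every $\mathcal{H}^{n-1}(\partial A_{h_k,j,y})$ converges and then pick~$j_m$; both schemes are standard and yield the same conclusion.
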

\begin{proof}
Let~$D$ be as in Definition~\ref{d:badcube}. For every $j \in \mathbb{N}$ let us denote by~$J^e_j$ the directional half-neighborhood of~$\Gamma_j$ and define the discrete jump energy
\[
E^{y,h}(\Gamma_j) := h^n \sum_{e \in D} \sum_{z \in h\mathbb{Z}^n} \frac{\mathbbm{1}_{J_j^{he}}(z + hy)}{h|e|} \,.
\]
Notice that, by definition of bad hyper-cubes, we have
\begin{equation}
\label{e:badcube}
E^{y,h}(\Gamma_j) \geq C \, \# \mathcal{B}_{h,j,y} h^{n-1}\,, % \, h^{n-1}\,  \frac{\# \mathcal{B}_{h,j,y} \, h^{n-1}}{|e|} \,.
\end{equation}
for a positive constant~$C$ independent of~$h$ and~$j$. Moreover for every~$h$ we can give the following estimate
\[
\begin{split}
\int_{[0,1)^n} \sum_{j=1}^\infty E^{y,h}(\Gamma_j) \, \di y &= \sum_{j=1}^\infty \sum_{e \in D} \sum_{z \in h\mathbb{Z}^n} h^n\int_{[0,1)^n} \frac{\mathbbm{1}_{J^{he}_j}(z + hy)}{h|e|} \, \di y
\\
&
= \sum_{j=1}^\infty \sum_{e \in D}  \int_{\mathbb{R}^n} \frac{\mathbbm{1}_{J^{he}_j}(y)}{h|e|} \, \di y =\sum_{j=1}^\infty \sum_{e\in D} \int_{\Pi^e} \bigg(\int_{\mathbb{R}} \frac{\mathbbm{1}_{J_j^{he}}(\overline{y} + se)}{h|e|} \, \di s \bigg) \di \overline{y} 
\\
&
\leq \sum_{j=1}^\infty \sum_{e\in D}\int_{\Pi^e} \mathcal{H}^0((\Gamma_j)_{\overline{y}}^e) \, \di \overline{y} \leq c \sum_{j=1}^\infty \mathcal{H}^{n-1}(\Gamma_j) =c L \,,
\end{split}
\]
where $c = \max_{| \nu | =1} (\sum_{e \in D}|\nu \cdot e|/|e|)$. Therefore, if we set $g(y) := \liminf_{h \to 0^+} \sum_{j=1}^\infty E^{y,h}(\Gamma_j)$ and define 
\[
H := \{ y \in [0,1)^n \ | \ g(y) \leq cL/\delta \} \,,
\]
by Fatou lemma and Chebyshev inequality we get that 
\[
\mathcal{L}^n([0,1)^n \setminus H) \leq \delta \,.
\]
Moreover, if $y \in H$, we have, up to passing to a subsequence depending on~$y$, that
\begin{equation}
\label{e:fatou-2}
g(y) = \lim_{h\to 0^{+}} \sum_{j=1}^{\infty} E^{y, h}(\Gamma_{j}) \leq \frac{cL}{\delta}\,.
\end{equation}
Again by Fatou lemma we have, along the same subsequence, that
\begin{equation}
\label{e:fatou}
\sum_{j=1}^\infty \liminf_{h \to 0^+} \, E^{y,h}(\Gamma_j) \leq g(y) \leq \frac{cL}{\delta} \,.
\end{equation}
 Therefore, for every $\epsilon_1>0$ there exists $j_1 \in \mathbb{N}$ such that
 \[
 \liminf_{h \to 0^+} \, E^{y,h}(\Gamma_{j_1}) \leq \epsilon_1 \,.
 \] 
 In particular, we can find a subsequence $h^1_k \searrow 0$ such that 
\[
\lim_{k \to \infty} \, E^{y,h^1_k}(\Gamma_{j_1}) = \liminf_{h \to 0^+} \,  E^{y,h}(\Gamma_{j_1}) \leq \epsilon_1 \,.
\]
Since the bounds~\eqref{e:fatou-2}--\eqref{e:fatou} are still valid along the subsequence~$(h^1_k)_k$, given~$\epsilon_2 >0$ we can find a sufficiently large $j_2 \in \mathbb{N}$ for which
\[
\liminf_{k \to \infty} \, E^{y,h^1_k}(\Gamma_{j_2}) \leq \epsilon_2 \,.
\]
As before, we can find a subsequence $(h^2_k)_k \subset (h^1_k)_k$ such that $h^2_k \searrow 0$ and
\[
\lim_{k \to \infty} \, E^{y,h^2_k}(\Gamma_{j_2}) =  \liminf_{k \to \infty} \, E^{y,h^1_k}(\Gamma_{j_2}) \leq \epsilon_2 \,.
\]
By induction, given a sequence $\epsilon_m \searrow 0$, we can construct a sequence $j_m \nearrow \infty$ and, for every $m \in \mathbb{N}$, the subsequences $(h_k^m)_k \subset (h_k^{m-1})_k$ satisfying 
\[
\lim_{k \to \infty} \, E^{y,h^m_k}(\Gamma_{j_m}) = \liminf_{k \to \infty} \, E^{y,h^{m-1}_k}(\Gamma_{j_m}) \leq \epsilon_m \,.
\]
Setting $h_k := h_k^k$ for every~$k$, we infer that $h_k \searrow 0$ and 
\[
\lim_{k \to \infty} \, E^{y,h_k}(\Gamma_{j_m}) \leq \epsilon_m \qquad  \text{ for every }m \,.
\]

Finally, by~\eqref{e:badcube} and by definition~\eqref{e:A} of~$A_{h_{k}, j_{m}}$ we estimate, for a suitable constants~$c_{1}(n), c_{2}(n) >0$ depending only on the dimension~$n$, 
\begin{displaymath}
\begin{split}
\limsup_{k \to \infty} \, \mathcal{H}^{n-1}(\partial A_{h_k,j_m}) & \leq c_1(n) \,  \limsup_{k \to \infty}\, \# \mathcal{B}_{h_k,j_m,y}\, h_k^{n-1}  
\\
&
\leq c_{2}(n) \limsup_{k \to \infty} E^{y,h_k}(\Gamma_{j_m}) \leq c_{2}(n) \epsilon_m \,,
\end{split}
\end{displaymath}
so that~\eqref{e:badcube2} holds. By letting $m \to \infty$ in the previous inequality we infer~\eqref{e:badcube1}.
\end{proof}

We now provide an approximation result for a function~$v \in GSBD(\Om_{1})$ in terms of more regular functions~$v_{k}$ whose jump~$J_{v_{k}}$ is contained in an $(n-1)$-dimensional $h$-grid~$\mathcal{Q}_h$ and that are~$W^{1, \infty}$ out of~$\overline{J_{v_{k}}}$, as in~\cite[Theorem~5]{iur}. The main difference is that here, in order to later prove Theorem~\ref{t:KL}, we have to carefully estimate the measure~$\HH^{n-1}(\pi_{n}(\overline{J_{v_{k}}}))$, where~$\pi_n$ denotes the orthogonal projection of~$\mathbb{R}^n$ onto~$e_n^\bot$. We further point out that our approximation is local in space, i.e., for $\Omega' \Subset \Om_{1}$, and that we do not need to approximate~$v$ in energy, as done in~\cite{cris}. For these reasons, a construction similar to those in~\cite[Theorem~5]{iur} can be performed in our setting without the additional assumptions~$v \in L^{2}(\Om_1;\R^{n})$ and~$e(v) \in L^{2}(\Om_1;\M^{n}_s)$, which were instead crucial in~\cite{cha, iur} to guarantee the convergence in energy and to construct a recovery sequence. 

%\textcolor{red}{Nelle Proposizioni qui sotto abbiamo scritto sempre $\om \times (0,1)$. Se passiamo a $\om \times (-\frac12, \frac 12)$ (pi\`u tradizionale) dobbiamo dire che usiamo l'approssimazione della Proposizione \ref{app} in $\om \times (0,\frac12)$ e $\om \times (-\frac12, 0)$ e poi unirle, sapendo che $(\nu_{u})_{n} = 0$?} \textcolor{blue}{In realtà la farei ancora più semplice: $\omega \times (-1/2,1/2)$ è il traslato verticale di $\omega \times (0,1)$ quindi $GSBD(\omega \times (0,1))$ è isomorfo a $GSBD(\omega \times (-1/2,1/2))$ tramite tale traslazione quindi $\mathcal{KL}(\omega \times (-1/2,1/2))$ è isomorfo a $\mathcal{KL}(\omega \times (0,1))$ tramite tale traslazione.}

\begin{proposition}
\label{app}
Let $U \subset \mathbb{R}^n$ be open, $v \in GSBD(U)$ with $\mathcal{H}^{n-1}(J_v) < \infty$, and  $V \Subset U$ a Lipschitz regular domain with $\mathcal{H}^{n-1}(\partial V \cap J_v)=0$. Then, there exists $(v_k)_{k=1}^\infty \subset GSBD(V) \cap W^{1,\infty}(V \setminus \overline{J_{v_k}}; \mathbb{R}^n)$ such that
\begin{enumerate}[(i)]
\item
$v_k \to v$ in measure in~$V  $ as $k \to \infty$;
\item
$e(v_k) \rightharpoonup e(v)$ weakly in $L^1(V; \M^{n}_{s})$ as $k \to \infty$; 
\item for every $\xi \in \mathbb{S}^{n-1}$
\begin{displaymath}
\lim_{k \to \infty} \mathcal{H}^{n-1}(\pi_\xi (\overline{J_{v_k}}) \setminus \pi_\xi (J_v \cap V) ) = 0\,;
\end{displaymath}

\item
$Tr(v_k) \to Tr(v)$ in $\mathcal{H}^{n-1}$-measure on $\partial V$ as $k \to \infty$;

\item If $v \cdot e_{j}$ is independent of~$x_{i}$, then for $\HH^{n-1}$-a.e.~$x' \notin \pi_{e_{i}} (\overline{J_{v}})$ the function $t \mapsto v_{k}(x' + t e_{i})\cdot e_{j}$ is constant.
\end{enumerate}

\end{proposition}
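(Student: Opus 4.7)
The plan is to adapt the grid-based approximation of~\cite[Theorem~5]{iur} to our $GSBD$ setting, with two key modifications: the grid orientation is chosen so that the boundaries of the bad cubes have small $(n-1)$-dimensional Hausdorff measure in \emph{every} projection direction (needed for~(iii)), and the interpolation on good cubes is coordinate-wise averaging, inherited by any one-variable symmetry of~$v$ (needed for~(v)).

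First, using the countable $(\mathcal{H}^{n-1}, n-1)$-rectifiability of~$J_v$, I would write $J_v \cap V$ up to a null set as an essentially disjoint union of compact subsets $K_m$ of $C^1$ graphs. Setting $\Gamma_j := (J_v \cap V) \setminus \bigcup_{m \leq j} K_m$, one has $\Gamma_j \subset J_v$ and $\mathcal{H}^{n-1}(\Gamma_j) \searrow 0$, and after thinning the sequence we may arrange $\sum_j \mathcal{H}^{n-1}(\Gamma_j) < \infty$. Proposition~\ref{p:badcube} then furnishes a shift $y \in (0,1)^n$, a subsequence $h_k \searrow 0$, and indices $j_m \nearrow \infty$ along which $A_{h_k, j_m}$ satisfies~\eqref{e:badcube2}--\eqref{e:badcube1}; by restricting $y$ further to a full-measure subset one may additionally require that the grid vertices lie outside $J_v$ and that $\partial V$ meets no grid face in a set of positive $\mathcal{H}^{n-1}$-measure.

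Next, for each $k$ I would choose $m(k) \nearrow \infty$ slowly, set $B_k := A_{h_k, j_{m(k)}}$, and define $v_k$ as follows. On each bad cube $Q \subset B_k$ set $v_k \equiv 0$, creating jumps only on $\partial Q \subset \partial B_k$. On each good cube $Q$ relative to $\Gamma_{j_{m(k)}}$, the goodness condition of Definition~\ref{d:badcube} guarantees that for every $e \in D$ the segments joining the selected corners of $Q$ avoid $\Gamma_{j_{m(k)}}$, so the corner values of $v$ can be coherently defined by approximate limits; I would then subdivide $Q$ using those nice pieces $K_m$ that cross it, take $v_k$ as a multilinear interpolant of the corner values on each sub-polyhedron, and finally apply a mollification of scale $\eta_k \ll h_k$ with a tensor-product kernel supported in the good region. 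Both the multilinear interpolant and the tensor-product mollifier are coordinate-wise averaging, so the $x_i$-independence of $v \cdot e_j$ is inherited by $v_k \cdot e_j$, giving~(v).

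Items~(i),~(ii),~(iv) then follow by standard $GSBD$-slicing arguments on the good cubes, together with $\mathcal{L}^n(B_k) \to 0$ and $\mathcal{H}^{n-1}(\partial V \cap J_v) = 0$, which permit one to discard the contribution from bad cubes and from the boundary. Property~(iii) is where the refined form of Proposition~\ref{p:badcube} is essential: the jump set of $v_k$ is contained in $\partial B_k \cup \bigcup_{m \leq j_{m(k)}} (K_m \cap V)$ up to an $\mathcal{H}^{n-1}$-null set, the projection $\pi_\xi(\partial B_k)$ has measure at most $\mathcal{H}^{n-1}(\partial B_k) \to 0$ (projections being $1$-Lipschitz, using~\eqref{e:badcube1}), and the remaining $K_m$-part projects into $\pi_\xi(J_v \cap V)$ by construction. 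The main obstacle is the simultaneous control of~(ii) and~(iii) under the weak $GSBD$ hypothesis, where no $L^p$ bound with $p > 1$ on $v$ or $e(v)$ is available: this forces careful slicing and approximate-differentiability arguments, while the coherent definition of corner values across adjacent cubes is secured by the genericity of the shift~$y$.
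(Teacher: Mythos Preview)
Your construction has a genuine gap in the definition of~$v_k$ on the ``good'' cubes. You declare a cube bad only relative to the small remainder $\Gamma_{j_{m(k)}}$, so a good cube $Q$ may still be crossed by large pieces of the jump set, namely $\bigcup_{m\le j_{m(k)}} K_m$. On such a cube the difference quotients between corner values see the jump of~$v$ across the~$K_m$ and are therefore uncontrolled; the multilinear interpolant of the corner values of~$Q$ would have huge strains there, destroying~(ii). Your fix---``subdivide $Q$ using the $K_m$ and interpolate on each sub-polyhedron''---is not a workable construction: $K_m\cap Q$ is a compact piece of a $C^1$ graph, not a hyperplane, so the resulting sub-regions are not polyhedra; their ``corners'' include points on~$K_m$ where~$v$ is two-valued, so no coherent corner data are available; and the subsequent mollification at scale $\eta_k$ would smear the jumps on~$K_m$, so the claimed inclusion $J_{v_k}\subset \partial B_k\cup\bigcup_{m\le j_{m(k)}}(K_m\cap V)$ fails.

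The paper resolves this by the opposite choice: cubes are declared bad relative to the \emph{full} jump set~$J_v$, so that on every good cube no edge of~$D$ meets~$J_v$ and the plain multilinear interpolant~$w_{h_k}$ has strains pointwise dominated by the piecewise-constant strain~$E_e^{h_k}$. This, together with the $L^1$-convergence $E_e^{h_k}\to e(v)e\cdot e$ and Dunford--Pettis, gives the equi-integrability needed for~(ii). The decomposition $J_v=\bigcup K_j\cup\Gamma_{j_m}$ and Proposition~\ref{p:badcube} enter \emph{only in the proof of~(iii)}: the bad set $A_k$ for~$J_v$ is split into bad cubes hitting some compact $K_j$ with $j\le j_m$ (their closures lie in an $h_k$-neighbourhood of~$K_j$, so their projections converge to $\pi_\xi(K_j)\subset\pi_\xi(J_v\cap V)$) and bad cubes for~$\Gamma_{j_m}$ (whose boundary measure is small by Proposition~\ref{p:badcube}). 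In short, you have inverted the roles of~$J_v$ and~$\Gamma_{j_m}$: the former defines the construction, the latter is an accounting device for the projection estimate.
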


\begin{proof}
%\textcolor{ao}{(Dovremmo uniformare la notazione dell'energia con quella dell'energia nella proposizione precedente?)} \textcolor{red}{Ho aggiunto la definizione di $E^{y,h}_{2}$. Vorresti unificare questa notazione a quella di $E^{y,h}$ nella dimostrazione precedente?} 
First we prove that there exists a sequence $(v_h)_{h >0} \subset GSBD(V) \cap W^{1,\infty}(V \setminus \overline{J_{v_h}}; \mathbb{R}^n)$ satisfying~$(i)$, $(ii)$, $(iv)$,  and $(v)$ as $h \to 0^+$, plus the fact $J_{v_h} \subset \mathcal{Q}_h^0 + hy$ for some $y \in [0,1)^n$. In order to prove this, we proceed similary to \cite[Theorem 5]{iur}: consider for a.e. $y \in [0,1)^n$ the $(n-1)$-dimensional $h$-grid $\mathcal{Q}_h^0 +hy$ and consider the discretized function of~$v$  
\begin{displaymath}
  %  \label{e:discu}
    v_h^y(\xi) := v(\xi +hy), \ \ \xi \in h\mathbb{Z}^{n} \cap (U -hy) \,,
\end{displaymath}
and define the continuous interpolation of $v_h^y$ 
\begin{displaymath}
%\label{e:cinter}
w_h^y(x) := \sum_{\xi \in h\mathbb{Z}^n \cap U} v_h^y(\xi) \Delta \bigg(\frac{x - (\xi +hy)}{h} \bigg) \qquad \text{for $x \in V$},
\end{displaymath}
where
\[
\Delta(x) := \prod_{i=1}^n(1-|x_i|)^+.
\]
Let us fix $V \Subset V' \Subset U$ and let us define the \emph{piecewise constant strain} in the direction $e \in D$ as
\begin{displaymath}
    %\label{e:discs}
    E^{y,h}_e(x) := \sum_{\xi \in V'}  \frac{[(v_h^y(\xi +he) - v_h^y(\xi))\cdot e]}{h} c^y_{e,h}(\xi) \mathbbm{1}_{\xi +hy + [0,h)^n}(x), \ \ x \in V\,, 
\end{displaymath}
where $c^y_{e,h}(\xi) := 1-\mathbbm{1}_{J^{eh}}(\xi +hy)$. Notice that, since $V \Subset V' \Subset U$, then~$E^{y,h}_e$ is well defined in~$V$ for every sufficiently small $h >0$. We claim that
\begin{equation}
    \label{e:convapp}
   \lim_{h \to 0^+}\int_{[0,1)^n}\bigg(\int_{V} |E_e^{y,h}(x) - f_e(x)| \, \di x \bigg)\di y =0 \qquad \text{for every $e \in D$}, 
\end{equation}
where $f_e := e(v)e \cdot e$. In order to simplify the next computation let us set
\[
Q_h^y(\xi) := [\xi +hy+[0,h)^n] \cap V\,, \qquad  U_h^y := U -hy\,.
\]
Now we write
\begin{align}\label{e:claim-convapp}
    &\int_{[0,1)^n}\bigg(\int_{V} |E_e^{y,h}(x) - f_e(x)| \, \di x \bigg) \di y\\ 
    &= \int_{[0,1)^n} \bigg( \!\sum_{\xi \in V'}   \int_{Q^y_h(\xi)} \! \bigg| \frac{(v (\xi+hy+he) - v(\xi +hy)) \cdot e}{h}c_{e,h}^y(\xi) - f_e(x)  \bigg|  \, \di x   \bigg) \di y \nonumber
    \\
    &= \int_{V} \bigg( \!\sum_{\xi \in V'}  \int_{[0,1)^n} \!  \bigg| \frac{(v(\xi+hy+he) - v(\xi +hy)) \cdot e}{h}c_{e,h}^y(\xi)  - f_e(x)  \bigg| \mathbbm{1}_{Q^y_h(\xi)}(x)  \, \di y   \bigg) \di x \nonumber
    \\
    &\leq \int_{V} \bigg( \!\sum_{\xi \in V' }   \mint_{\xi +[0,h)^n} \! \bigg| \frac{(v(z+he) - v(z)) \cdot e}{h}\mathbbm{1}_{U \setminus J^{eh}}(z)   - f_e(x)  \bigg| \mathbbm{1}_{[0,1)^n}\bigg(\frac{x-z}{h}\bigg)  \, \di z   \bigg)\di x\,, \nonumber
\end{align}
where in the last inequality we have performed the change of variable $z = \xi + hy$ and we have used the trivial inclusion $[0,1)^n \cap (\frac{V-\xi}{h} - y) \subset [0,1)^n$. We can continue the estimate~\eqref{e:claim-convapp} by noticing that the cubes $\xi + [0,h)^n$ are pairwise disjoints, so that 
\begin{align}\label{e:claim-convapp2}
    &\int_{[0,1)^n}\bigg(\int_{V} |E_e^{y,h}(x) - f_e(x)| \, \di x \bigg)\di y
    \\ 
    &
    \qquad \leq \int_{V} \bigg( \frac{1}{h^n}  \int_{U} \bigg| \frac{(v(z+he) - v(z)) \cdot e}{h}\mathbbm{1}_{U \setminus J^{eh}}(z)   - f_e(x)  \bigg| \mathbbm{1}_{[0,1)^n}\bigg(\frac{x-z}{h}\bigg)  \, \di z   \bigg)\di x \nonumber
    \\
    &
    \qquad \leq \int_{V} \bigg( \frac{1}{h^n}  \int_{U \setminus J^{eh}} \bigg| \frac{(v(z+he) - v(z)) \cdot e}{h}   - f_e(x)  \bigg| \mathbbm{1}_{[0,1)^n}\bigg(\frac{x-z}{h}\bigg)  \, \di z   \bigg)\di x \nonumber
     \\
    &
    \qquad \qquad+ \int_V \bigg( \frac{1}{h^n} \int_{J^{eh}} |f_e(x)| \mathbbm{1}_{[0,1)^n}\bigg(\frac{x-z}{h}\bigg)\di z\bigg)\di x\,. \nonumber
\end{align}
We treat the last two integrals in~\eqref{e:claim-convapp2} separately. For the first we have that
\[
\begin{split}
    &\int_{V} \bigg( \frac{1}{h^n}  \int_{U \setminus J^{eh}} \bigg| \frac{(v(z+he) - v(z)) \cdot e}{h}   - f_e(x)  \bigg| \mathbbm{1}_{[0,1)^n}\bigg(\frac{x-z}{h} \bigg) \, \di z\bigg)\di x 
    \\
    &
    =\int_{V} \bigg( \frac{1}{h^n}  \int_{\Pi^e } \bigg( \int_{(U \setminus J^{eh})_y^e} \bigg| \frac{(v(y+te+he) - v(y+te)) \cdot e}{h}   - f_e(x)  \bigg| \mathbbm{1}_{[0,1)^n}\bigg(\frac{x-y -te}{h} \bigg) \, \di t\bigg)\di y\bigg)\di x 
    \\ 
    &
    \leq \int_{V} \bigg( \frac{1}{h^n}  \int_{\Pi^e } \bigg( \int_{(U \setminus J^{eh})_y^e}  \bigg(\mint_0^h |D_s v(y+te+se) \cdot e    - f_e(x)|\, ds\bigg)   \mathbbm{1}_{[0,1)^n}\bigg(\frac{x-y -te}{h} \bigg) \, \di t\bigg)\di y\bigg)\di x 
    \\
    &
    = \int_{V} \bigg( \frac{1}{h^n}  \int_{\Pi^e } \bigg( \int_{(U \setminus J^{eh})_y^e}  \bigg(\mint_0^h |f_e(y+te+se)   - f_e(x)|\, \di s\bigg)   \mathbbm{1}_{[0,1)^n}\bigg(\frac{x-y -te}{h} \bigg) \, \di t\bigg)\di y\bigg)\di x, 
    \end{split}
\]
where in the last equality we have used the fact that $t \notin (U \setminus J^{eh})_y^e$ implies $\{t+s \ | \ s \in  [0,h)\} \cap (J_u)_y^e = \emptyset$. We can continue the previous estimate with
\begin{align*}
\int_{V} &  \bigg( \frac{1}{h^n}  \int_{U \setminus J^{eh}} \bigg| \frac{(v(z+he) - v(z)) \cdot e}{h}   - f_e(x)  \bigg| \mathbbm{1}_{[0,1)^n}\bigg(\frac{x-z}{h} \bigg) \, \di z\bigg) \di x
\\
&
\leq \int_{V}\bigg(\mint_0^h  \bigg( \frac{1}{h^n}  \int_{U}  |f_e(z+se)    - f_e(x)|   \mathbbm{1}_{[0,1)^n}\bigg(\frac{x-z}{h}\bigg)\di z\bigg)\di s\bigg)\di x \nonumber
\\
&
= \mint_0^h\bigg( \int_{V}\bigg(   \mint_{x - [0,h)^n}  |f_e(z+se) - f_e(x)| \,  \di z\bigg)\di x\bigg)\di s \nonumber
\\
&
=   \mint_0^h\bigg( \int_{V}\bigg(   \mint_{[0,h)^n}  |f_e(x -z + se) - f_e(x)| \,  \di z\bigg)\di x \bigg)\di s \nonumber
\\
&
=    \int_0^1\bigg(\int_{[0,1)^n}\bigg( \int_{V}|f_e(x+h( se - z )) - f_e(x)| \,  \di x\bigg)\di z\bigg)\di s \,. \nonumber
\end{align*}
The continuity property of the translations in $L^1(U)$ plus the Dominated Convergence Theorem allow us to deduce that
\begin{align}\label{e:claim-convapp3}
\lim_{h \to 0^{+}} & \int_{V}   \bigg( \frac{1}{h^n}  \int_{U \setminus J^{eh}} \bigg| \frac{(v(z+he) - v(z)) \cdot e}{h}   - f_e(x)  \bigg| \mathbbm{1}_{[0,1)^n}\bigg(\frac{x-z}{h} \bigg) \, \di z\bigg) \di x
\\
&
\leq \lim_{h \to 0^+} \int_0^1\bigg(\int_{[0,1)^n}\bigg( \int_{V}|f_e(x+h(se - z)) - f_e(x)| \,  \di x\bigg)\di z\bigg)\di s=0\,. \nonumber
\end{align}

The second term on the right-hand side of~\eqref{e:claim-convapp2} can be estimated as follows
\[
\begin{split}
\int_V \bigg( \frac{1}{h^n} \int_{J^{eh}} |f_e(x)| \mathbbm{1}_{[0,1)^n}\bigg(\frac{x-z}{h} \bigg)\di z\bigg)\di x  &\leq  \int_{J^{eh}}\bigg( \mint_{[0,h)^n} |f_e(z+x)| \, \di x  \bigg)\di z \\
&= \int_{[0,1)^n}\bigg(\int_{J^{eh} +hx}  |f_e(z)| \, \di z  \bigg)\di x \,.
\end{split}
\]
Being $\mathcal{L}^n(J^{eh})$ infinitesimal as $h \to 0^+$ (see the proof of Proposition \ref{p:badcube}), we easily deduce that
\begin{align}\label{e:claim-convapp4}
\lim_{h \to 0^{+}} \int_V &  \bigg( \frac{1}{h^n} \int_{J^{eh}} |f_e(x)| \mathbbm{1}_{[0,1)^n}\bigg(\frac{x-z}{h} \bigg)\di z\bigg)\di x 
\\
&
\leq \lim_{h \to 0^+} \int_{[0,1)^n}\bigg(\int_{J^{eh} +hx}  |f_e(z)| \, \di z  \bigg)\di x =0\,. \nonumber
\end{align}
As a consequence of~\eqref{e:claim-convapp2}--\eqref{e:claim-convapp4} we obtain the claim~\eqref{e:convapp}. Moreover, by looking at the proof of \cite[Theorem 5]{iur}, thanks to the fact that $V \Subset U$ and $\mathcal{H}^{n-1}(\partial V \cap J_v)=0$, we deduce that
\begin{align}
\label{e:app3}
    & \lim_{h \to 0^+}\int_{[0,1)^n} \bigg(\int_V|w^y_h(x) - v(x)| \wedge 1 \, \di x \bigg)\di y=0\,, \\
    \label{e:app4}
    &\lim_{h \to 0^+}\int_{[0,1)^n} \bigg(\int_{\partial V} | Tr( w^y_h) (x) - Tr( v) (x) | \wedge 1\, \di\mathcal{H}^{n-1}(x) \bigg) \di y =0\,, \\
    \label{e:app5}
    &\lim_{h \to 0^+}\int_{[0,1)^n} E^{y,h}_2((\partial V)_{nh}) \, \di y =0\,,
\end{align}
where $(\partial V)_{nh} := \{ x \in \R^{n} : \, d (x, \partial V) <nh\}$ and $ E^{y,h}_2 ( (\partial V)_{nh})$ is defined as in~\cite[formula (32)]{iur} as
\begin{displaymath}
E^{y, h}_{2} ((\partial V)_{nh}) := h^{n} \sum_{ e \in D} \sum_{\substack{\xi \in (\partial V)_{nh} - hy \\ \xi \in (\partial V)_{nh} - hy - he}} \frac{\mathbbm{1}_{J^{he} ( \xi + hy)}}{h |e|}\,.
\end{displaymath}

 We recall that since $J_v$ is countably $(\mathcal{H}^{n-1},n-1)$-rectifiable and has finite measure, arguing similarly to~\cite[Lemma 3.2.18]{fed} we find a sequence~$K_j$ of compact subsets of~$\mathbb{R}^{n-1}$ with associated Lipschitz maps $\psi_j \colon K_j \to \mathbb{R}^n$ such that $\psi_{j_1}(K_{j_1}) \cap \psi_{j_2}(K_{j_2}) = \emptyset$ for $j_1 \neq j_2$ and
\begin{equation}\label{jump0}
\mathcal{H}^{n-1}\bigg(J_v  \setminus \bigcup_{j=1}^\infty \psi_j (K_j)\bigg) = 0 \ \ \text{and} \ \ \mathcal{H}^{n-1}(\psi_j (K_j) \setminus J_v) = 0 \ \ \text{for $j \in \mathbb{N}$}\,.
\end{equation}
In addition, being $\mathcal{H}^{n-1}(\partial V \cap J_v)=0$ we may also suppose that 
\begin{equation}
\label{e:app13}
\psi_{j}(K_j) \Subset V \ \ \text{or} \ \ \psi_{j}(K_j) \Subset U \setminus \overline{V}\ \ \text{for $j \in \mathbb{N}$}\,.
\end{equation}
For every $m \in \mathbb{N}\setminus\{0\}$, let~$j_m$ be such that 
\begin{equation}
    \label{dec1}
    \sum_{j > j_m} \mathcal{H}^{n-1}(\psi_{j}(K_j)) \leq \frac{1}{m^2} \,.
\end{equation}

Let us set $\Gamma_{J_{0}} := J_{v}$ and $\Gamma_{j_{m}} := \cup_{j > j_m }\psi_j(K_j)$ for $m \geq 1$. In view of~\eqref{jump0}--\eqref{dec1} we can apply Proposition~\ref{p:badcube} from which we deduce, in combination with~\eqref{e:convapp} and~\eqref{e:app3}--\eqref{e:app5}, that there exists $y \in [0,1)^n$, a subsequence of~$(j_m)_m$, which with abuse of notation we still denote by~$(j_m)_m$, and a subsequence~$(h_k)_k$ for which we have
\begin{align}
  \label{e:app6}
  &\lim_{k \to \infty}\int_{V} |E_e^{y,h_k}(x) - f_e(x)| \, \di x =0 \ \ \text{for $e \in D$}, \\
  \label{e:app7}
  & \lim_{k \to \infty}\int_V|w^y_{h_k}(x) - v(x)| \wedge 1 \, \di x =0\,, \\
    \label{e:app8}
    &\lim_{k \to \infty}\int_{\partial V} |Tr( w^y_{h_k}) (x) - Tr(  v) (x) | \wedge 1\, \di \mathcal{H}^{n-1}(x)  =0 \,, \\
    \label{e:app9}
    &\lim_{k \to \infty} \, E^{y,h_k}_2((\partial V)_{nh_k}) =0 \,, \\
    \label{e:app10}
    &\lim_{m \to \infty} \, \limsup_{k \to \infty} \, \mathcal{H}^{n-1}(\partial A_{h_k,j_m}) = 0 \,,\\
    \label{e:app102}
    & \limsup_{k \to \infty} \, \HH^{n-1}( \partial A_{h_{k}, j_{m}} ) <+\infty \qquad \text{for every $m$}\,,
\end{align}
where~$A_{h_k,j_m}$ is the union of bad hyper-cubes of $\mathcal{Q}_{h_k}^0+h_ky$ relative to~$\Gamma_{j_m}$. We further notice that, following the proof of Proposition~\ref{p:badcube}, we may assume that the first term of the subsequence~$\Gamma_{j_{0}} = J_{v}$. Since~$y$ is fixed, in what follows we omit the dependence on~$y$.  

Now we proceed with the construction of $(v_k)_{k=1}^\infty$. Arguing similarly to \cite[Theorem~5]{iur} we define the function~$v_k$ equals to~$0$ on each bad hyper-cube of $\mathcal{Q}^0_{h_k}$ relative to~$J_v$ and $v_k := w_{h_k}$ otherwise in~$V$. In this way \eqref{e:app7}--\eqref{e:app9} imply~$(i)$ and~$(iv)$ by arguing in a very same way as in \cite[Theorem~5]{iur}, while~$(v)$ comes by construction. To prove~$(ii)$ we first notice that~\eqref{e:app6} implies in particular that
\[
E^{h_k}_e \rightharpoonup f_e, \qquad \text{weakly in }L^1(V) \text{ as } k \to \infty.
\]
By using Dunford-Pettis Theorem, we deduce the existence of a positive and increasing map $\varphi \colon \mathbb{R}^+ \to \mathbb{R}^+$ with $\lim_{t \to +\infty} \varphi(t)/t=+\infty$, for which  
\[
\sup_{k \in \mathbb{N}} \int_{V} \varphi(|E^{h_k}_e|) \, \di x < + \infty \,.
\]
On the other hand it is possible to verify that for a.e. $x$ belonging to a good hyper-cube of~$\mathcal{Q}^0_{h_k}$ relative to~$J_v$ the continuous interpolation~$w_{h_k}$ satisfies
\begin{equation}
\label{e:app12}
|e(w_{h_k})(x)e\cdot e| \leq C |E^{h_k}_e(x)| \,,  
\end{equation}
for a dimensional constant $C >0$. For instance, if $e = e_{1}$ we have that
\begin{align*}
e& (w_{h_k})(x)e_{1} \cdot e_{1}  = \sum_{\xi \in h_{k}\mathbb{Z}^{n} \cap U} e \bigg(v_{h_{k}}^{y}(\xi) \Delta \bigg(\frac{x - (\xi +h_{k} y)}{h_{k}} \bigg) \bigg) e_{1} \cdot e_{1} 
\\
&
= - \frac{1}{h_{k}} \sum_{\substack{\xi \in h_{k}\mathbb{Z}^{n} \cap U\\ x_{1} - \xi_{1} - h_{k} y_{1} \in (0,h)}} \big( v_{h_{k}}^{y}(\xi) \cdot e_{1} \big) \bigg(\prod_{j\neq 1}^n(1-|x_j - \xi_{j} - h_{k} y_{j} |)^+ \bigg) 
\\
&
\qquad + \frac{1}{h_{k}} \sum_{\substack{\xi \in h_{k}\mathbb{Z}^{n} \cap U\\ x_{1} - \xi_{1} - h_{k} y_{1} \in (-h,0)}}  \big( v_{h_{k}}^{y}(\xi) \cdot e_{1} \big) \bigg(\prod_{j\neq 1}^n(1-|x_j - \xi_{j} - h_{k} y_{j} |)^+ \bigg)
\\
&
=   \sum_{\substack{\xi \in h_{k} \mathbb{Z}^{n} \cap U\\ x_{1} -\xi_{1} - h_{k} y_{1} \in (0,h)}} \!\!\!\!\!\! \!\!\!\! \frac{\big( ( v_{h_{k}}^{y} (\xi + h_{k} e_{1}) - v_{h_{k}}^{y} (\xi) ) \cdot e_{1} \big)}{h_{k}}
 \bigg(\prod_{j\neq 1}^n(1-|x_j - \xi_{j} - h_{k} y_{j} |)^+ \bigg)  c^y_{e_{1},h_{k}}(\xi) \,,
\end{align*}
where, in the last step, we have used the fact that $x$ belongs to a good hyper-cube. Hence, we deduce~\eqref{e:app12} for $e=e_{1}$. In a similar way, we can conclude \eqref{e:app12} for every $e \in D$. 

%By definition of the interpolation~$\Delta$, for every~$\xi$ giving a non-trivial contribution to the above sum and such that $x \in \xi + h_{k} y + (0,h)^{n}$, also $\xi + h_{k} e_{1}$ contributes to the sum and satisfies $x_{1} - \xi_{1} - h_{k} e_{1} \in (-h, 0)$. For such $\xi$, we may write
%\begin{align*}
%\big( v_{h_{k}}^{y}(\xi) \cdot e_{1} \big) \bigg(\prod_{j\neq 1}^n(1-|x_j - \xi_{j} - h_{k} y_{j} |)^+ \bigg) - \big( v_{h_{k}}^{y}(\xi + he_{1}) \cdot e_{1} \big) \bigg(\prod_{j\neq 1}^n(1-|x_j - \xi_{j} - h_{k} y_{j} |)^+ \bigg) = 
%\end{align*}}

As a consequence
\[
|e(v_k)(x)e\cdot e| \leq C |E^{h_k}_e(x)|  \qquad \text{for a.e.~$x \in V$, for $e \in D$}.
\]
For this reason, if we define the positive, increasing, and superlinear map $\psi_C \colon \mathbb{R}^+ \to \mathbb{R}^+$ as $\varphi_C(t) := \varphi(t/C)$, then we deduce
\begin{equation}
\label{e:app11}
\sup_{k \in \mathbb{N}} \int_{V} \varphi_C(|e(v_k)(x)e\cdot e|) \, \di x < + \infty \,.
\end{equation}
Since $\Gamma_{j_{0}} = J_{v}$ and, by construction, $J_{v_k} \subset \partial A_{h_k,j_0}$, by~\eqref{e:app102} we have the additional information
%In addition, without loss of generality we may suppose that the choice of~$\Gamma_{j_m}$ has been done such that $\Gamma_{j_0} = J_v$. The proof of  tells us that we may also assume that for every~$m$
%\[
%\limsup_{k \to \infty} \mathcal{H}^{n-1}(\partial A_{h_k,j_m})< \infty \,.
%\]
%Hence, being , we have the additional information
\begin{equation}\label{e:jump-bounded}
\sup_{k \in \mathbb{N}} \, \mathcal{H}^{n-1}(J_{v_k}) < + \infty \,.
\end{equation}
Combining~\eqref{e:app11} with~\eqref{e:jump-bounded} and~$(i)$, we can make use for example of the technique in \cite[Theorem 11.3]{dal} to deduce the validity of~$(ii)$.

To prove~$(iii)$ we fix $\xi \in \mathbb{S}^{n-1}$. To simplify the notation we denote by~$A_k$ and~$A'_k$ the union of bad hyper-cubes of $\mathcal{Q}^0_{h_k}$ relative to $J_v$ and $J_v \cap V$, respectively. By construction,~$J_{v_k}$ is contained in $\partial A_k \cap V$. We proceed as follows: first we estimate the measure of the projection of $A'_k$ onto $\xi^\bot$, then we show that the measure of the projection of $(A_k \setminus A'_k) \cap V$ onto $\xi^\bot$ is infinitesimal as $k \to \infty$, and finally we deduce~$(iii)$.

In what follows we consider only those indices $j$ for which $\psi_{j} (K_j) \Subset V$ (see \eqref{e:app13}). Let us denote by~$\mathcal{B}_{h_k,j}$ the set of bad hyper-cubes relative to~$\psi_j (K_j)$ and let~$\mathcal{B}_{h_k,j}'$ be the set of hyper-cubes for which one of their edges is contained in the set~$\{x \in V \ | \  \mathrm{dist} (x,\psi_j (K_j)) \leq h_k \}$. Then, $\mathcal{B}_{h_k,j} \subseteq \mathcal{B}'_{h_k, j}$. Now fix a direction~$\xi \in \mathbb{S}^{n-1}$. If we set $B''_{k,j} := \pi_\xi(\cup_{Q \in \mathcal{B}'_{h_k,j}} \overline{Q})$, we have that
\begin{equation}
\label{dec}
\mathcal{H}^{n-1}(B''_{k,j}\setminus \pi_\xi(\psi_{j}(K_j)) ) =  O(1/k) \,.
\end{equation}
Indeed, equality~\eqref{dec} follows from the fact that $B''_{k,j} \subset \{y \in \Pi^\xi \ | \ \text{dist}(y,\pi_\xi(\psi_j(K_j))) \leq (1 + \sqrt{n})h_k \}$ and clearly, since $\pi_\xi(\psi_j(K_j))$ is compact, it holds true
\[
\lim_{k \to \infty} \, \mathcal{H}^{n-1}\big(\{y \in \Pi^\xi \ | \ \text{dist}(y,\pi_\xi(\psi_j(K_j))) \leq (1 + \sqrt{n})h_k \} \setminus \pi_\xi(\psi_j(K_j))\big) = 0\,.
\]

In view of~\eqref{dec} given~$m$ we can find $k_m$ such that for every $j \leq j_m$ and for every $k \geq k_m$
\begin{equation}
\label{dec2}
    \mathcal{H}^{n-1}(B''_{k,j} \setminus \pi_\xi(\psi_{j}(K_j)) ) \leq  \frac{\epsilon}{j_m} \,.
\end{equation}
Let us define~$B_{k,1} := B''_{k,1} $ and, by induction,~$B_{k,j} := B''_{k,j} \setminus \cup_{l=1}^{j-1} B_{k,l}$ for every $1 < j \leq j_m$ and for every $k \geq k_m$. Notice that~\eqref{dec2} implies
\begin{align}
    \label{dec3}
\mathcal{H}^{n-1}(B_{k,j} \setminus \pi_\xi(\psi_{j}(K_j)) ) \leq  \frac{\epsilon}{j_m}  \qquad \text{for $1 \leq j \leq j_{m}$ and $k \geq k_{m}$}\,.
\end{align}

Now for every $k \geq k_m$, by construction we have that if $Q \in \mathcal{B}'_{h_{k},j}$ for some $1 \leq j \leq j_m$, then $\pi_\xi(\overline{Q}) \subset \bigcup_{j=1}^{j_m} B_{k,j}$. %Moreover, the sets $\pi_\xi\big (\psi_j (K_j ) \setminus [B''_{k,j} \setminus B_{k,j}] \times \mathbb{R})$ are pairwise disjoint for $j = 1, \ldots, j_m$, and their union is equal to $\bigcup_{j=1}^{j_m} \pi_\xi (\psi_j(K_j))$. 
Therefore, we can use~\eqref{jump0} and~\eqref{dec3} to estimate for every $k \geq k_m$
\begin{align}\label{jump1}
& \HH^{n-1} \Bigg(  \bigg( \bigcup_{j=1}^{j_{m}}    \bigcup_{ Q \in \mathcal{B}_{h_k,j} } \pi_\xi ( \overline{Q} ) \bigg) \setminus \pi_{\xi}( J_{v} \cap V ) \Bigg) 
\\
&
\qquad \leq \HH^{n-1}   \Bigg ( \bigg( \bigcup_{j=1}^{j_{m} }  \bigcup_{ Q \in \mathcal{B}_{h_k,j} } \pi_\xi ( \overline{Q} ) \bigg)  \setminus \bigg(\bigcup_{j=1}^{\infty} \pi_{\xi}(\psi_{j}(K_{j})) \bigg)  \Bigg )  \nonumber
 \\
 &
 \qquad \leq  \HH^{n-1}  \Bigg ( \bigg(\bigcup_{j=1}^{j_{m}} \bigcup_{Q \in \mathcal{B}'_{h_k,j}} \pi_\xi(\overline{Q})  \bigg) \setminus \bigg(\bigcup_{j=1}^{\infty} \pi_{\xi}(\psi_{j}(K_{j})) \bigg)  \Bigg )  \nonumber
 \\
 &
\qquad  \leq  \HH^{n-1} \Bigg ( \bigg( \bigcup_{j=1}^{j_{m}} B_{k, j} \bigg)\setminus \bigg(\bigcup_{j=1}^{\infty} \pi_{\xi}(\psi_{j}(K_{j})) \bigg)  \Bigg )  \nonumber
 \\
 &
\qquad  \leq \sum_{j = 1}^{j_m} \mathcal{H}^{n-1} \big(B_{k, j} \setminus \pi_{\xi}( \psi_{j}(K_{j})) \big) \leq \epsilon \,. \nonumber
\end{align}

To estimate the $\mathcal{H}^{n-1}$-measure of the projection of the bad hyper-cubes relative to $J_v \cap V$ which do not belong to~$\mathcal{B}_{h_k,j}$ for some $1 \leq j \leq j_m$, we can notice that such hyper-cubes are contained in the family of bad hyper-cubes relative to~$\Gamma_{j_{m}} = \cup_{j > j_m} \psi_j(K_j)$. If we denote by~$A'_{h_{k}, j_{m}}$ the union of such bad hyper-cubes, we can use relation~\eqref{e:app10} to write
\begin{align}
\label{jump2}
\limsup_{k \to \infty} \mathcal{H}^{n-1}(\pi_\xi(\partial A'_{h_{k}, j_{m}})) &\leq \limsup_{k \to \infty} \mathcal{H}^{n-1}(\pi_\xi(\partial A_{h_{k}, j_{m}})) \\
&\leq \limsup_{k \to \infty} \mathcal{H}^{n-1}(\partial A_{h_{k}, j_{m}}) = O(1/m) \,, \nonumber
\end{align}
where in the first inequality we have used the following general fact 
\[
A' \subset A \Rightarrow \pi_\xi(\partial A') \subset \pi_\xi(\partial A),
\]
for every couple of sets $A',A \subset \mathbb{R}^n$ with $A' \subset A$ and $A$ bounded.
% Since by construction $J_{v_k} \subset \partial A_{h_k,j_0}$, and since for every couple of sets $A,B$ the following identity holds true
% \[
% \pi_\xi(\partial A) \subset \pi_\xi(\partial B) \cup \pi_\xi(\partial (A\setminus B)),
% \]
Now we define 
\begin{align*}
\mathcal{A}^1_k &:= \{Q \in \mathcal{Q}_{h_k}^0 : Q \text{ is a bad hyper-cubes for } J_v, \, Q \cap V \neq \emptyset,\, \overline{Q} \cap (V \setminus (\partial V)_{nh_k}) \neq \emptyset     \} \\
\mathcal{A}^2_k &:= \{Q \in \mathcal{Q}_{h_k}^0  :  Q \text{ is a bad hyper-cubes for } J_v, \, Q \cap V \neq \emptyset, \, \overline{Q} \cap (V \setminus (\partial V)_{nh_k}) = \emptyset\}.
\end{align*}
Notice that if $Q \in \mathcal{A}^1_k$ then $Q$ is a bad hyper-cube relative to $J_v$ such that $Q \subset V$. In particular, the inclusion $Q \subset V$ implies that actually $Q$ is a bad hyper-cube relative to $J_v \cap V$. Namely, the following implication holds true
\begin{equation}
    \label{e:app14}
    Q \in \mathcal{A}_k^1  \Rightarrow Q \subset A'_k.
\end{equation}
On the other hand, if $Q \in \mathcal{A}^2_k$ then $Q$ is a bad hyper-cube relative to $J_v$ such that $Q \subset (\partial V)_{nh_k}$ which means that each of its edges is contained in $(\partial V)_{nh_k}$. A similar argument to the proof of (3'') \cite[Theorem 3.5]{iur} shows that there exists a dimensional constant $c>0$ for which
\[
(\# \mathcal{A}^2_k) h^{n-1} \leq c E^{h_k}_2((\partial V)_{nh_k}).
\]
In particular we can infer
\[
\mathcal{H}^{n-1} \Big(\partial \Big(\bigcup_{Q \in \mathcal{A}^2_k} Q \Big) \Big) \leq (\# \mathcal{A}^2_k) h^{n-1} \leq c E^{h_k}_2((\partial V)_{nh_k}) \,.
\]
Condition~\eqref{e:app9} ensures that 
\begin{equation}
\label{e:app15}
\lim_{k \to \infty} \, \mathcal{H}^{n-1} \Big(\partial \Big(\bigcup_{Q \in \mathcal{A}^2_k} Q \Big) \Big) =0 \,.
\end{equation}
Every bad hyper-cube relative to~$J_v$ which has non-empty intersection with $V$ is contained in $\mathcal{A}^1_k \cup \mathcal{A}^2_k$. Therefore, if we set
\[
A^1_k := \bigcup_{Q \in \mathcal{A}_k^1} Q \ \ \text{and} \ \ A^2_k:= \bigcup_{Q \in \mathcal{A}_k^2} Q
\]
we can give the following estimate
\begin{align}\label{e:app15.1}
\mathcal{H}^{n-1} & (\pi_\xi(\partial A_k \cap V) \setminus \pi_\xi(J_v \cap V)) 
\\
&
\leq  \mathcal{H}^{n-1}(\pi_\xi(\partial A^1_k) \setminus \pi_\xi(J_v \cap V)) + \mathcal{H}^{n-1}(\pi_\xi(\partial A^2_k )) \nonumber
 \\
&\leq \mathcal{H}^{n-1}(\pi_\xi(\partial A'_k)\setminus \pi_\xi(J_v \cap V)) + \mathcal{H}^{n-1}(\pi_\xi(\partial A^2_k))\,, \nonumber
\end{align}
where for the last inequality we have used~\eqref{e:app14} to deduce that $\pi_\xi(\partial A^1_k) \subset \pi_\xi(\partial A'_k)$. We estimate separately the limsup of the last two terms of~\eqref{e:app15.1}. Concerning the first term we can use implication~\eqref{e:app14} to write 
\[
\begin{split}
\mathcal{H}^{n-1}(\pi_\xi(\partial A'_k)\setminus \pi_\xi(J_v \cap V)) \leq &\ \mathcal{H}^{n-1} (\pi_\xi ( \partial A_{h_{k}, j_{m}} ) ) \\
&+  \HH^{n-1} \Bigg(  \bigg( \bigcup_{j=1}^{j_{m} }   \bigcup_{ Q \in \mathcal{B}_{h_k,j} } \pi_\xi ( \overline{Q} ) \bigg) \setminus \pi_{\xi}( J_{v} \cap V ) \Bigg) ,
\end{split}
\]
for every~$m$, where we have used that
\[
\pi_\xi(\partial (A'_k \setminus A_{h_k,j_m})) \subset \bigcup_{j=1}^{j_{m} }   \bigcup_{ Q \in \mathcal{B}_{h_k,j} } \pi_\xi ( \overline{Q} ) \,. 
\]
Hence, we can make use of~\eqref{jump1} and~\eqref{jump2} to write
\begin{equation}
\label{e:app16}
\limsup_{k \to \infty}  \, \mathcal{H}^{n-1} (\pi_\xi (\partial A'_k) \setminus \pi_{\xi}(J_{v} \cap V) ) 
\leq O(1/m) + \epsilon  \,.
\end{equation}
The second term on the right-hand side of~\eqref{e:app15.1} can be estimated by using~\eqref{e:app15}, i.e.
\begin{equation}
    \label{e:app17}
    \limsup_{k \to \infty} \mathcal{H}^{n-1}(\pi_\xi(\partial A^2_k)) \leq \limsup_{k \to \infty} \mathcal{H}^{n-1} \Big(\partial \Big(\bigcup_{Q \in \mathcal{A}^2_k} Q\Big) \Big) =0 \,.
\end{equation}
Thanks to \eqref{e:app16}--\eqref{e:app17} and the arbitrariness of $m \in \mathbb{N}$ and $\epsilon>0$ we obtain from~\eqref{e:app15.1}
\begin{displaymath}
    \lim_{k \to \infty}\mathcal{H}^{n-1}(\pi_\xi(\partial A_k \cap V) \setminus \pi_\xi(J_v \cap V)) =0 \,.
\end{displaymath}
Finally,~$(iii)$ is proved since $\overline{J_{v_k}} \subset \partial A_k \cap V$.
\end{proof}

\begin{remark}\label{rmk:ch}
The same argument used in \cite{cha,iur} shows that whenever $v \in GSBD^2(U)$ then (ii) of Proposition \ref{app} becomes $\|e(v_k)-e(v)\|_{L^2(V)} \to 0$ as $k \to \infty$.
\end{remark}

 \begin{remark}\label{rmk:ae}
 Here we limit ourselves to observe that, in point (iii) of the previous theorem, also $\mathcal{H}^{n-1}(\pi_\xi(J_v \cap V) \setminus \pi_\xi(\overline{J_{v_k}}))$ goes to zero as $k \to \infty$ but possibly only for a.e. $\xi \in \mathbb{S}^{n-1}$.
 \end{remark}

%For simplicity of notation, in Propositions~\ref{p:KL2}--\ref{vj} and in the proof of Theorem~\ref{t:KL}, we work on the open set~$\omega \times (0,1)$ rather that on~$\Om_{1} = \omega \times \big(-\frac12, \frac12 \big)$. Clearly, since the spaces $GSBD^{2}(\Om_{1})$ and $GSBD^{2}(\omega\times (0,1))$ are isomorphic by translation, all the results obtained for $\omega\times (0,1)$ are also valid for~$\Om_{1}$.

  In the next proposition we show~$(i)$ of Theorem~\ref{t:KL} and do a first step towards the proof of formula~\eqref{formuladd}.
  
    \begin{proposition}
  \label{p:KL2}
Let $u \in \mathcal{KL}(\Omega_{1})$. 
%with $\mathcal{H}^{n-1}(J_u) < \infty$ be such that $e_{i,n}(u) =0$ for $i=1, \dotsc, n$ and~$(\nu_{u})_{n} =0$ on~$J_u$.
Then,~$u_n$ does not depend on~$x_n$. Moreover,
% if we write the generic point $x \in \omega \times ( 0, 1)$ as $x = (x',x_n)$, 
for every $\alpha = 1, \dotsc, n-1$ there exists an $\mathcal{H}^{n-1}$-measurable function $\psi_{\alpha} \colon \omega \to \mathbb{R}$ such that
\begin{equation}
\label{formula}
 \!\!\!\  u_{\alpha}(x',x_n) = Tr ( u_\alpha) \Big(x', -\frac12 \Big) - \Big( x_n + \frac12 \Big) \psi_{\alpha}(x') \quad \text{for $\mathcal{L}^{n}$-a.e.~$(x',x_n) \in \Omega_{1}  $}\,.
\end{equation}
%where~$u^+_{\alpha}$ denotes the trace of~$u_\alpha$ on $\omega \times \{0\}$.
\end{proposition}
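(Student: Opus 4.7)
The proposition consists of two claims, which I address in order.

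\emph{Step 1 (Independence of $u_n$ from $x_n$).} I rely on the slicing characterization of $GSBD$ in the direction $\xi = e_n$. For $\HH^{n-1}$-a.e.~$y \in \Pi^{e_n}$, the one-dimensional slice $t \mapsto \hat{u}^{e_n}_y(t) = u_n(y + te_n)$ lies in $SBV_{loc}((\Om_1)^{e_n}_y)$; its approximate derivative equals $e_{n,n}(u)(y + te_n) = 0$, and its jump set corresponds to the points of $J_u \cap (y + \R e_n)$ at which $(\nu_u)_n \neq 0$, which is empty by the $\mathcal{KL}$-hypothesis. Hence $\hat{u}^{e_n}_y$ is $\Ll^1$-a.e.\ constant, which via Fubini gives the $x_n$-independence of $u_n$.

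\emph{Step 2 (Affine dependence of $u_\alpha$).} The strategy is to apply Proposition~\ref{app} in order to approximate $u$ by $v_k \in GSBD(V) \cap W^{1,\infty}(V \setminus \overline{J_{v_k}}; \R^n)$ on a suitable $V \Subset \Om_1$, integrate the identity $\partial_n (v_k)_\alpha = 2 e_{\alpha,n}(v_k) - \partial_\alpha (v_k)_n$ along vertical fibres, and pass to the limit. Fix $V = \om' \times (-1/2,1/2)$ with $\om' \Subset \om$ and $\HH^{n-1}(\partial V \cap J_u) = 0$. Two ingredients from Proposition~\ref{app} are decisive. First, by the area formula applied to the Lipschitz pieces of $J_u$, the assumption $(\nu_u)_n = 0$ yields $\HH^{n-1}(\pi_n(J_u \cap V)) = 0$; combined with property~$(iii)$ this gives $\HH^{n-1}(\pi_n(\overline{J_{v_k}})) \to 0$, so that for $\HH^{n-1}$-a.e.~$x' \in \om'$ (outside a set of vanishing measure) the fibre $\{x'\} \times (-1/2,1/2)$ avoids $\overline{J_{v_k}}$ and $v_k(x', \cdot) \in W^{1,\infty}((-1/2,1/2);\R^n)$. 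Second, since $u_n$ is $x_n$-independent by Step~1, property~$(v)$ with $i = j = n$ implies that for $\HH^{n-1}$-a.e.\ $x' \notin \pi_n(\overline{J_u})$ the function $t \mapsto (v_k)_n(x', t)$ is constant, whence $\partial_\alpha (v_k)_n(x', \cdot) =: \tilde\psi_k(x')$ is also independent of $t$. On such good fibres the Fundamental Theorem of Calculus yields
\begin{equation*}
(v_k)_\alpha(x', x_n) - (v_k)_\alpha(x', -1/2) = 2 \int_{-1/2}^{x_n} e_{\alpha,n}(v_k)(x', s)\, \di s - (x_n + 1/2)\, \tilde\psi_k(x').
\end{equation*}
Letting $k \to \infty$, properties~$(i)$ and~$(iv)$ give convergence of the left-hand side to $u_\alpha(x', x_n) - Tr(u_\alpha)(x', -1/2)$, while the weak $L^1$-convergence $e_{\alpha,n}(v_k) \rightharpoonup 0$ forces the integral term to vanish (for a.e.~$x'$, along a subsequence, via Fubini and equi-integrability). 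The affine structure in $x_n$ then forces $\tilde\psi_k(x')$ to converge a.e.\ to an $\HH^{n-1}$-measurable function $\psi_\alpha(x')$, yielding~\eqref{formula} on $V$; exhausting $\Om_1$ by such subdomains completes the proof.

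\emph{Main obstacle.} The crux is the a.e.~slicewise vanishing of the error integrals $\int_{-1/2}^{x_n} e_{\alpha,n}(v_k)(x',s)\,\di s$: the mere weak $L^1$-convergence $e(v_k) \rightharpoonup e(u)$ does not a priori imply this, and one must exploit equi-integrability of $e(v_k)$ together with a diagonal extraction in $x'$. A second delicate point is to ensure that $\pi_n(\overline{J_u})$ is $\HH^{n-1}$-negligible, since property~$(v)$ of Proposition~\ref{app} provides no information on this set; this is natural in view of $(\nu_u)_n = 0$ but requires a careful choice of rectifiable representative for $J_u$ whose closure does not enlarge the projection.
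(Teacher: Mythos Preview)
Your Step~1 is correct and essentially identical to the paper's argument.

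In Step~2 there are two genuine gaps.

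\emph{First}, the set $V=\om'\times(-\tfrac12,\tfrac12)$ is \emph{not} compactly contained in $\Om_1=\om\times(-\tfrac12,\tfrac12)$, so Proposition~\ref{app} does not apply. The paper cures this by working on $\om'\times(-\delta,\delta)$ with $0<\delta<\tfrac12$ chosen so that $\HH^{n-1}(J_u\cap\partial(\om'\times(-\delta,\delta)))=0$, proving the affine formula there with trace taken at $x_n=-\delta$, and only at the end letting $\delta\to\tfrac12$.

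\emph{Second}, the obstacle you single out is real and your proposed fix is too weak: weak $L^1$-convergence of $e_{\alpha,n}(v_k)$ to $0$ together with equi-integrability does \emph{not} yield, for $\HH^{n-1}$-a.e.\ $x'$, that $\int_{-\delta}^{t} e_{\alpha,n}(v_k)(x',s)\,\di s\to 0$; nor does it give any control on the pointwise limit of $\tilde\psi_k(x')=\partial_\alpha(v_k)_n(x')$. The paper bypasses both issues simultaneously. It fixes two levels $t_1,t_2$ (outside a null set $I$), uses Egoroff to upgrade the convergences in~$(i)$ and~$(iv)$ to $L^\infty$ convergence on $\om'\setminus E$ with $\HH^{n-1}(E)\le\epsilon$, and then \emph{integrates the FTC identity over an arbitrary measurable set $B\subset\om'\setminus(A\cup E)$ in the $x'$ variable}. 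This turns the troublesome term into $\int_{B\times(-\delta,t_i)}e_{\alpha,n}(v_k)\,\di x$, which goes to $0$ by weak $L^1$-convergence, and leaves on the right-hand side the quantity $\lim_k\int_B \tfrac12 D_\alpha(v_k)_n(x')\,\di\HH^{n-1}$, which is \emph{the same for $i=1$ and $i=2$} because $(v_k)_n$ is $x_n$-independent on those fibres. Since the two left-hand sides are then equal for every test set $B$, one concludes
\[
\frac{u_\alpha(x',t_1)-Tr(u_\alpha)(x',-\delta)}{t_1+\delta}=\frac{u_\alpha(x',t_2)-Tr(u_\alpha)(x',-\delta)}{t_2+\delta}\quad\text{for }\HH^{n-1}\text{-a.e.\ }x',
\]
and $\psi_\alpha$ is \emph{defined} as this common value (for one fixed $t$), with no need to pass to the limit in $\tilde\psi_k$ pointwise.

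On your second delicate point: you do not need $\HH^{n-1}(\pi_n(\overline{J_u}))=0$, which is indeed not guaranteed. The relevant null set is $A:=\bigcap_h\bigcup_{k\ge h}\pi_n(\overline{J_{v_k}})$, which has $\HH^{n-1}(A)=0$ because $\HH^{n-1}(\pi_n(\overline{J_{v_k}}))\to 0$ (this you argued correctly via~$(iii)$ and the area formula). For $x'\notin A$, eventually the fibre $\{x'\}\times(-\delta,\delta)$ avoids $\overline{J_{v_k}}$; on such fibres $(v_k)_n$ is $x_n$-independent directly from the construction in Proposition~\ref{app} (the multilinear interpolant $(w_h)_n$ of an $x_n$-independent function is itself $x_n$-independent, and $v_k=w_{h_k}$ on good cubes), so no appeal to the exceptional set $\pi_n(\overline{J_u})$ in~$(v)$ is needed.
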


\begin{proof}
Combining the fact that $e_{n,n}(u) = 0$ with $(\nu_{u})_n =0$ we easily deduce that $D_n u_n =0$, so that~$u_n$ does not depend on~$x_n$.

To show formula \eqref{formula} we consider  a Lipschitz-regular open set $\omega' \Subset \omega$ such that $\mathcal{H}^{n-1}\big ( ( \partial \omega' \times (-\frac12, \frac12)) \cap J_u \big) = 0$. For $0 < \delta < \frac12$, we apply Proposition~\ref{app} to the function~$u$ on the open sets~$\omega \times (-\frac12, \frac12)$ and~$\om' \times (- \delta,  \delta)$, taking care to have chosen $\delta>0$ such that $\mathcal{H}^{n-1}(\partial (\om' \times (- \delta,  \delta)) \cap J_u)=0$ (a.e. choice of $\delta$ does the job). We denote by $(u_h)_h \subset GSBD^2(\omega' \times(- \delta,  \delta)) \cap W^{1,\infty}(\omega' \times(- \delta,  \delta) \setminus \overline{J_{u_h}}; \mathbb{R}^n)$ the approximating sequence given by Proposition~\ref{app}.

First of all notice that since~$(\nu_u)_n=0$, by property~$(iii)$ of Proposition~\ref{app} we know that $\mathcal{H}^{n-1}(\pi_n(\overline{J_{u_h}})) \to 0$ as $h \to \infty$. By passing eventually through a subsequence we may suppose $\sum_h \mathcal{H}^{n-1}(\pi_n(\overline{J_{u_h}})) < \infty$. Hence, if we define 
\begin{displaymath}
A_h := \bigcup_{k \geq h} \pi_n (\overline{J_{u_h}}) \qquad \text{and} \qquad  A := \bigcap_{h=1}^\infty A_h \,,
\end{displaymath}
then $\mathcal{H}^{n-1}(A) =0$. Moreover, from~$(i)$ and~$(iv)$ of Proposition~\ref{app} we deduce that there exists a set $I \subset (-\frac12,\frac12)$ with $\mathcal{H}^1(I)=0$ such that for $\alpha = 1, \dotsc, n-1$ the following holds true:
\begin{enumerate}
 \item $\displaystyle \lim_{h \to \infty} \int_{ \omega'}  |u_h(x',x_n) - u(x',x_n)| \wedge 1 \, \di \mathcal{H}^{n-1}(x') =0$, \ \ $x_n \in (-\frac12, \frac12) \setminus I$;
% \item $\displaystyle \lim_{h \to \infty} \int_\delta^{1-\delta} |e(u_h)(x',x_n) - e(u)(x',x_n)| \, \di x_n =0$;
\item $\displaystyle \vphantom{\int} \lim_{h \to \infty} \int_{ \omega'}  |Tr \big( (u_h)_\alpha \big ) (x', - \delta) - Tr (u_\alpha) (x',- \delta)| \wedge 1 \, \di \mathcal{H}^{n-1}(x') =0$.
\end{enumerate}
We claim that for every $t_1,t_2 \in (-\frac12, \frac12 ) \setminus I$ we have
\begin{equation}
\label{formulad1}
 \frac{u_{\alpha}(x',t_1) - Tr (u_\alpha)  (x', - \delta)}{(t_1 + \delta )} =\frac{u_{\alpha}(x',t_2) - Tr (u_\alpha)  (x', - \delta)}{(t_2 + \delta )} \quad \mathcal{H}^{n-1}\text{-a.e. in }\omega' . 
\end{equation}

% for every $x' \in \omega' \setminus (A \cup B)$ and for $\mathcal{L}^1$-a.e.~$x_n \in (\delta,1-\delta)$, it holds true
% \begin{equation}
% \label{formulad}
%  u_{\alpha}(x',x_n) = Tr (u_\alpha)  (x',\delta) - (x_n - \delta) \psi_{\alpha}^\delta(x') \,, 
% \end{equation}
% for a suitable $\mathcal{H}^{n-1}$-measurable function $\psi_{\alpha}^\delta \colon \omega' \to \mathbb{R}$.

To show~\eqref{formulad1} fix $\epsilon >0$. We use conditions (1) and (2) together with Egoroff's Theorem to deduce that, up to subsequences, there exists a measurable set $E \subset \omega'$ with $\mathcal{H}^{n-1}(\omega' \setminus E) \leq \epsilon$ such that
\begin{align}
\label{e:stru1}
&\lim_{h \to \infty} \|u_h(\cdot,t_1) -u(\cdot,t_1)\|_{L^\infty(\omega'\setminus E)} =0\,, \\
\label{e:stru2}
&\lim_{h \to \infty} \|u_h(\cdot,t_2) -u(\cdot,t_2)\|_{L^\infty(\omega'\setminus E)} =0\,, \\
\label{e:stru3}
&\lim_{h \to \infty} \|Tr((u_h)_\alpha) (\cdot, - \delta ) -Tr(u_\alpha) (\cdot, - \delta )\|_{L^\infty(\omega'\setminus E)} =0\,.
\end{align}
Now let~$x' \in \omega' \setminus (A \cup E)$. Then, there exists~$h$ for which $x' \notin A_h$ it holds that $x' \in \bigcap_{k \geq h} [ \omega' \setminus \pi_n(\overline{J_{u_k}})] $. Therefore, being $\pi_n(\overline{J_{u_k}})$ closed sets, for every $k \geq h$ there exists $r >0$ (depending on $k$) for which 
\begin{equation}
\label{open}
B^{n-1}_{r}(x') \times ( - \delta, \delta) \cap \overline{J_{u_k}} = \emptyset \,,
\end{equation}
where $B^{n-1}_{r}(x') \subseteq \omega'$ denotes here the $(n-1)$-dimensional ball of radius $r$ and center~$x'$.
In particular, being~$u_n$ independent of~$x_n$, by~\eqref{open} and by~$(v)$ of Proposition~\ref{app} we have that the approximating functions~$u_k$ is such that~$(u_k)_n$ does not depend on~$x_n$ in the set $B^{n-1}_r(x') \times ( - \delta, \delta)$. Moreover, since~$u_k$ is Lipschitz continuous on $B_{r}^{n-1}(x') \times ( - \delta, \delta)$, we can apply the Fundamental Theorem of Calculus on the segment $\{x'\} \times ( - \delta,t_1)$ $(x_n < \delta)$ to deduce that, for $\alpha = 1 , \ldots, n-1$, 
\[
(u_k)_\alpha(x',t_1) - Tr \big ( (u_k)_\alpha \big)  (x', - \delta) = \int_{-\delta}^{t_1} e_{\alpha,n}(u_k)(x',t) \, \di t - \frac{(t_1 + \delta)}{2} D_{\alpha}(u_k)_n(x') \,.
\]

Hence, by using \eqref{e:stru1}, \eqref{e:stru3}, the weak convergence (ii) of Proposition \ref{app}, and the fact that $e_{\alpha,n}(u) =0$, we can take the integral on an arbitrary measurable set $B \subset \omega' \setminus (A \cup E)$ on both side of the previous inequality and let $k \to \infty$ to deduce that 
\begin{equation}
    \label{e:stru4}
    \int_{B} \frac{u_\alpha(x',t_1) - Tr \big ( u_\alpha \big)  (x', - \delta)}{t_1 + \delta} \, \di \mathcal{H}^{n-1}(x') = \lim_{k \to \infty} \int_{B}\frac{D_\alpha (u_k)_n(x')}{2} \, \di \mathcal{H}^{n-1}(x') \,.
\end{equation}
Notice that the uniform convergence \eqref{e:stru1}--\eqref{e:stru3} together with the fact that $u_k \in W^{1,\infty}([\omega' \times ( - \delta, \delta)] \setminus \overline{J_{u_h}};\mathbb{R}^n)$ guarantee that the integrand in the left hand side of~\eqref{e:stru4} belongs to $L^1(\omega' \setminus (A \cup E))$. The same argument shows that for every measurable set $B \subset \omega' \setminus (A \cup E)$ it holds true
\begin{equation}
    \label{e:stru5}
    \int_{B} \frac{u_\alpha(x',t_2) - Tr \big ( u_\alpha \big)  (x', - \delta)}{t_2+\delta} \, \di \mathcal{H}^{n-1}(x') = \lim_{k \to \infty} \int_{B}\frac{D_\alpha (u_k)_n(x')}{2} \, \di \mathcal{H}^{n-1}(x') \,.
\end{equation}
Finally, putting together \eqref{e:stru4} with \eqref{e:stru5} we deduce that 
\[
\frac{u_\alpha(x',t_1) - Tr \big ( u_\alpha \big)  (x', - \delta)}{t_1 + \delta} = \frac{u_\alpha(x',t_2) - Tr \big ( u_\alpha \big)  (x', - \delta)}{t_2 + \delta}, \qquad \mathcal{H}^{n-1}\text{-a.e. in } \omega' \setminus (A \cup E).
\]
Letting $\epsilon  \searrow 0$ in the construction of $E$, we deduce \eqref{formulad1} since $\HH^{n-1}(A) = 0$.

Now fix $t \in (-\frac12, \frac12) \setminus I$ and define the measurable set
\[
H := \bigg\{x \in \omega' \times (-\delta, \delta) \ | \ \frac{u_{\alpha}(x',x_n) - Tr (u_\alpha)  (x', - \delta)}{(x_n + \delta)} =\frac{u_{\alpha}(x',t) - Tr (u_\alpha)  (x', - \delta)}{(t + \delta)}   \bigg\}.
\]
We claim that $H$ has full measure in $\omega' \times ( - \delta, \delta)$. Indeed by using Fubini's Theorem we can write
\[
\mathcal{L}^n(H) = \int_{- \delta}^{\delta} \mathcal{H}^{n-1}(\{x' \in \omega \ | \ (x',x_n) \in H \}) \, \di x_n,
\]
which immediately implies our claim thanks to \eqref{formulad1}. By applying again Fubini's Theorem we infer that
\[
\mathcal{H}^1(\{x_n \in (-\delta, \delta) \ | \ (x',x_n) \in H  \}) = 2\delta \qquad \mathcal{H}^{n-1}\text{-a.e. }x' \in \omega'.
\]
Thus, defining
\[
\psi_{\alpha}^\delta(x') := \frac{ Tr (u_\alpha ) (x', - \delta)  - u_\alpha(x',t) }{(t + \delta)} \qquad \text{for $\mathcal{H}^{n-1}$-a.e.~$x' \in \omega'$} \,,
\]
we obtain exactly that for $\mathcal{L}^n\text{-a.e. }x=(x',x_n) \in \omega' \times (- \delta, \delta)$
\begin{equation}
    \label{formulad}
    u_\alpha(x',x_n) = Tr(u_\alpha)(x', - \delta) -(x_n + \delta) \psi_\alpha^\delta(x') \,,
\end{equation}
for every $\alpha = 1, \dotsc, n-1$. Moreover, since $Tr (u_\alpha ) (x', - \delta) \to Tr ( u_\alpha) (x',  -\frac12)$ as $\delta \to \frac{1}{2}^{+}$, defining
\[
\psi_{\alpha}(x') := \frac{ Tr ( u_\alpha)  (x', - \frac12 ) - u_\alpha(x',t) }{t + \frac12} \qquad \text{for $\mathcal{H}^{n-1}$-a.e.~$x' \in \omega'$} 
\]
and passing to the limit as $\delta \to \frac12^+$ in \eqref{formulad} (this can be done since a.e. $\delta >0$ is admissible) we obtain \eqref{formula} for $\mathcal{L}^{n}$-a.e.~$(x',x_n) \in \omega' \times (-\frac12,\frac12)$. Finally, \eqref{formula} is achieved by letting $\omega' \nearrow \omega$.
\end{proof}

\begin{proposition}
\label{vj}
Let $u \in \mathcal{KL}(\Omega_{1} )$. Then, there exists $\Gamma' \subset \omega$ such that
\[
J_u = \Gamma' \times \Big( -\frac12, \frac12 \Big) \,.
\]

Moreover, if~$\psi_{\alpha}$ are as in Proposition~\ref{p:KL2}, then the functions
 \begin{align*}
& v(x') := \Big(Tr (u_1) \Big(x',-\frac12 \Big), \dotsc, Tr ( u_{n-1} ) \Big(x',-\frac12\Big) \Big) \,, \\ 
&  \psi(x') := (\psi_1(x'), \dotsc, \psi_{n-1}(x'))
\end{align*}
belong to~$GSBD^2(\omega)$.
\end{proposition}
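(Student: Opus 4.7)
The plan is to first establish $v, \psi \in GSBD^2(\omega)$ via the one-dimensional slicing characterization of $GSBD^p$-functions (\cite[Theorem 3.5]{dal}), and then deduce the product structure of $J_u$ from the explicit formula~(\ref{formula}) of Proposition~\ref{p:KL2}. Fix $\xi \in \mathbb{S}^{n-2} \subset \mathbb{R}^{n-1}$ and set $\xi' := (\xi,0) \in \mathbb{S}^{n-1}$, so that $\Pi^{\xi'} = \xi^\perp \times \mathbb{R}$. Since $u \in GSBD^2(\Om_1)$, for $\mathcal{H}^{n-1}$-a.e.\ $y = (\bar y, y_n) \in \Pi^{\xi'}$ the slice $\hat u_y^{\xi'}$ belongs to $SBV_{loc}((\Om_1)_y^{\xi'})$, with derivative in $L^2$ and discrete jump set. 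Using formula~(\ref{formula}),
\begin{equation*}
\hat u_y^{\xi'}(t) = (v \cdot \xi)(\bar y + t\xi) - \Big(y_n + \tfrac{1}{2}\Big)(\psi \cdot \xi)(\bar y + t\xi).
\end{equation*}
By a Fubini argument in $y_n$, I can pick two distinct levels $y_n^1, y_n^2 \in (-\tfrac12,\tfrac12)$ such that for $\mathcal{H}^{n-2}$-a.e.\ $\bar y \in \xi^\perp$ both slices $\hat u_{(\bar y, y_n^1)}^{\xi'}$ and $\hat u_{(\bar y, y_n^2)}^{\xi'}$ lie in $SBV_{loc}$ with the integrability above. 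Inverting the resulting $2 \times 2$ linear system pointwise in $t$ expresses the slices of $v\cdot \xi$ and $\psi\cdot \xi$ in direction $\xi$ as explicit linear combinations of those of $u \cdot \xi'$; they therefore belong to $SBV_{loc}$, and the required $L^2$-derivative bounds as well as the jump-counting bounds transfer by linearity through another Fubini in $y_n$ together with $\dot{\hat u}^{\xi'}_y = e(u)\xi'\cdot\xi'$ and the finiteness of $\mathcal{H}^{n-1}(J_u)$. The slicing characterization \cite[Theorem~3.5]{dal} then yields $v, \psi \in GSBD^2(\omega)$.

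For the product structure of $J_u$, set $\Gamma' := J_v \cup J_\psi \cup J_{u_n} \subset \omega$, the approximate jump sets of the measurable functions on $\omega$ (each well-defined as the set of points with distinct one-sided approximate limits). From~(\ref{formula}) and $u_n(x',x_n) = u_n(x')$, computing approximate limits on both sides of any candidate jump shows that $(x_0', x_n) \in J_u$ forces $x_0' \in \Gamma'$; this identification uses $(\nu_u)_n = 0$ so that one-sided limits in the $x'$-direction can be taken at every height $x_n$ simultaneously. Conversely, at each $x_0' \in \Gamma'$ the jump vector $[u](x_0', x_n)$ is affine in $x_n$: its $\alpha$-th component equals $[v_\alpha](x_0') - (x_n + \tfrac12)[\psi_\alpha](x_0')$ for $\alpha = 1, \ldots, n-1$, and its $n$-th component equals $[u_n](x_0')$. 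This vector is nonzero for every $x_n \in (-\tfrac12, \tfrac12)$ except possibly a single exceptional value (the common zero of all affine components, possible only if $[u_n](x_0') = 0$); summed over $x_0' \in \Gamma'$ these exceptional points form a graph of $\mathcal{H}^{n-1}$-measure zero. Hence $J_u = \Gamma' \times (-\tfrac12, \tfrac12)$ up to $\mathcal{H}^{n-1}$-negligible sets.

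The main technical difficulty lies in the slicing argument: ensuring the auxiliary pair $(y_n^1, y_n^2)$ can be chosen so that the slicing and integrability conditions for $u$ hold simultaneously for $\mathcal{H}^{n-2}$-a.e.\ $\bar y$, and that the linear-combination procedure transmits both the $L^2$-integrability of the derivative and the jump-counting finiteness uniformly over $\xi \in \mathbb{S}^{n-2}$. Here property (iii) of Proposition~\ref{app} combined with $(\nu_u)_n = 0$ is instrumental: it yields the control $\mathcal{H}^{n-2}(\pi_n(\overline{J_u})) < +\infty$, hence $\mathcal{H}^{n-2}(\Gamma') < +\infty$, which is the remaining ingredient for the $GSBD^2$-property of $v$ and $\psi$.
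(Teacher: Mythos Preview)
Your argument for $v,\psi\in GSBD^2(\omega)$ via one-dimensional slicing is essentially sound but needlessly laborious: the paper simply invokes \cite[Theorem~4.19]{dal}, which gives that for $\mathcal{L}^1$-a.e.\ $x_n$ the restriction $w(\cdot,x_n):=(u_1(\cdot,x_n),\dots,u_{n-1}(\cdot,x_n))$ lies in $GSBD^2(\omega)$. Choosing two such levels $y_n\neq z_n$ and solving the linear system \emph{at the level of $GSBD^2(\omega)$-functions} immediately yields $\psi\in GSBD^2(\omega)$, and then $v\in GSBD^2(\omega)$ by~\eqref{formula}. This sidesteps the Fubini gymnastics and the question of uniformity in~$\xi$.

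Your treatment of the product structure of~$J_u$, however, has a genuine gap. When you write ``at each $x_0'\in\Gamma'$ the jump vector $[u](x_0',x_n)$ is affine in~$x_n$'' with components $[v_\alpha](x_0')-(x_n+\tfrac12)[\psi_\alpha](x_0')$ and $[u_n](x_0')$, you tacitly assume that $v$, $\psi$, and~$u_n$ all admit one-sided approximate limits at~$x_0'$ along a \emph{common} direction. This fails when, for instance, $x_0'\in J_v\cap J_\psi$ with $\nu_v(x_0')\neq\pm\nu_\psi(x_0')$: the blow-up of $u(\cdot,x_n)$ at~$x_0'$ is then four-valued rather than two-valued, so $(x_0',x_n)\notin J_u$ and the quantities $[v](x_0')$, $[\psi](x_0')$ are not simultaneously defined. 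The paper confronts this case explicitly (see the argument around~\eqref{conv}--\eqref{e:106}): at such points one has $\Theta^{*(n-2)}(\hat\mu_\psi,x_0')>0$, whence they lie in an $\mathcal{H}^{n-2}$-negligible subset of~$\Theta_\psi$ by \cite[Proposition~6.1 and Theorem~6.2]{dal}. A further subtlety you overlook is that $u_n$ is not known to belong to any $BV$-type space (cf.\ the discussion in the introduction), so rectifiability of~$J_{u_n}$---and with it any normal-alignment argument on $J_{u_n}\cap J_v$ or $J_{u_n}\cap J_\psi$---is unavailable a priori. The paper circumvents all of this by arguing in the other direction: it shows that for $\mathcal{H}^{n-1}$-a.e.\ $x\in J_u$ the full segment $\{x'\}\times(-\tfrac12,\tfrac12)$ lies in~$J_u$, via a case analysis on whether the vertical fiber through~$x$ meets~$J_u$ in more than one point, together with an Area-Formula argument using $(\nu_u)_n=0$. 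Finally, your closing appeal to Proposition~\ref{app}(iii) is off target: that statement controls projections of the jump sets of the \emph{approximating} sequence~$v_k$, not of~$J_u$ itself, and plays no role at this stage.
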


\begin{remark}
\label{pju3}
Notice that being the jump of~$u$ of the form $J_u = \Gamma' \times (-\frac12, \frac12)$ and being~$u_n$ independent from~$x_n$, then also~$J_{u_n}$ is of the form $\Gamma'' \times (-\frac12,\frac12)$ for some $\Gamma'' \subset \Gamma'$.
\end{remark}

\begin{proof}[Proof of Proposition~\ref{vj}]
By \cite[Theorem 4.19]{dal} we know that for $\mathcal{L}^1$-a.e. $x_n \in (-\frac12,\frac12)$ it holds true 
\[
(u_1(\cdot,x_n), \dotsc, u_{n-1}(\cdot,x_n)) \in GSBD^2(\omega) \,.
\]
In order to simplify the notation, set $w(x',x_n) := (u_1(\cdot,x_n), \dotsc, u_{n-1}(\cdot,x_n))$. Thus, by~\eqref{formula} there exist $y_n \neq z_n$ such that
\[
\frac{w(x',y_n) - w(x',z_n)}{(z_n - y_n)}= \psi(x') \in GSBD^2(\omega) \, ,
\]
which in turn, by using again formula \eqref{formula}, also implies  $v \in GSBD^2(\omega)$. This gives the second part of of the proposition.

In order to prove $J_u = \Gamma' \times (-\frac12,\frac12)$ for some~$\Gamma' \subseteq \omega$, it is enough to prove that for $\mathcal{H}^{n-1}$-a.e.~$x \in J_u$ we have 
\begin{equation}
\label{thesis}
\mathcal{H}^{n-1} \Big( \Big(\{x'\} \times \Big(- \frac12, \frac12 \Big)\Big) \cap J_u \Big) =1 \,.
\end{equation}
Suppose $x= (x',x_n) \in J_u$. Then, there are two possibilities:
\begin{enumerate}
\item
there exists $y_n \in (-\frac12, \frac12)$ such that $(x',y_n) \in J_u$;
\vspace{1mm} 
\item
$(x',t) \notin J_u$ for every $t \neq x_n$.
\end{enumerate}
In case (1), we further distinguish two subcases: either $\nu_u((x',y_n)) = \pm \nu_u((x',x_n))$ or $\nu_u((x',y_n)) \neq \pm \nu_u((x',x_n))$. In the first case, by using formula~\eqref{formula} together with the fact that~$u_n$ does not depend on~$x_n$ we have
\begin{equation}
    \label{dq}
\frac{u(x',t) - u(x',s)}{s-t} = (\psi(x'),0) \qquad \text{for }  (x',t,s) \in \omega \times \Big(-\frac12,\frac12 \Big) \times \Big(-\frac12,\frac12 \Big)\,.
\end{equation}
This implies that~$x'$ is a point of approximate continuity for~$\psi$ or a jump point for~$\psi$ with $\nu_{\psi}(x') = \pm \nu_u(x',x_n)$ (remember that $(\nu_u)_n = 0$). Suppose that~$x'$ is a jump point of~$\psi$ (in the case of a point of approximate continuity one can argue in the very same way). Then, there exist $a \neq b \in \mathbb{R}^{n}$ and $a' \neq b' \in \mathbb{R}^{n-1}$ such that
\[
u(x + ry) \to a\mathbbm{1}_{\{\nu_u(x) \cdot z >0\}}(y) + b\mathbbm{1}_{\{-\nu_u(x) \cdot z >0\}}(y) \,,
\]
locally in $\mathcal{L}^n$-measure as $r \to 0^+$, and 
\[
\psi(x' + ry') \to a'\mathbbm{1}_{\{\nu_u(x) \cdot z' >0\}}(y') + b'\mathbbm{1}_{\{-\nu_u(x) \cdot z' >0\}}(y') \,,
\]
locally in $\mathcal{H}^{n-1}$-measure as $r \to 0^+$. These two convergences imply that if we set $x_0 := (x',t)$ with $t \neq x_n$, by using
\[
\frac{u(x',t) - u(x',x_n)}{x_n - t} = (\psi(x'),0) \,,
\]
we deduce
\[
u(x_0 + ry) \to [a + (x_n - t)a']\mathbbm{1}_{\{\nu_u(x) \cdot z >0\}}(y) + [b + (x_n - t)b']\mathbbm{1}_{\{-\nu_u(x) \cdot z >0\}}(y) \,,
\]
locally in $\mathcal{H}^{n-1}$-measure as $r \to 0^+$. This means that $(x',t) \in J_u$ for every $t \in (-\frac12, \frac12) \setminus \{x_n\}$ such that
\[
a +(x_n -t)a' \neq b + (x_n-t)b' \,.
\]
Hence, \eqref{thesis} is satisfied if~$(1)$ holds and~$\nu_u(x',y_n) = \pm \nu_u(x',x_n)$.

We claim that the set of $x'$ satisfying~$(1)$ and $\nu_u( x',y_n) \neq \pm \nu_u(x',x_n)$ is $\mathcal{H}^{n-2}$-negligible. Indeed if we set $H^\pm_{x_n} := \{ y' \in \mathbb{R}^{n-1} \ | \ \pm \nu_u(x',x_n) \cdot y' > 0  \}$ and $H^\pm_{y_n} := \{ y' \in \mathbb{R}^{n-1} \ | \ \pm \nu_u(x',y_n) \cdot y' > 0  \}$, then by using again \eqref{dq} we notice that
\begin{align}
\label{conv}
\psi(x' + ry') \to  &\ \xi_1\mathbbm{1}_{H_{x_n}^+ \cap H_{y_n}^+}(y')  + \xi_2 \mathbbm{1}_{H_{x_n}^+ \cap H_{y_n}^-}(y')
\\
&
 + \xi_3 \mathbbm{1}_{H_{x_n}^- \cap H_{y_n}^+}(y') + \xi_4 \mathbbm{1}_{H_{x_n}^- \cap H_{y_n}^-}(y') \,, \nonumber
\end{align}
locally in $\mathcal{H}^{n-1}$-measure as $r \to 0^+$, where $\xi_{i}$ are vectors in $\mathbb{R}^{n-1}$ such that (as one can easily verify) the function on the right-hand side of~\eqref{conv} has a jump which is not an $(n-1)$-dimensional hyper-plane. The convergence in \eqref{conv} implies that by introducing the measure~$\hat{\mu}_{\psi}$ (see \cite[Definition 4.16]{dal}) and by exploiting the lower semi-continuity property given by \cite[Lemma 4.18]{dal}, we have $\Theta^{*(n-2)}(\hat{\mu}_{\psi},x') >0$, where $\Theta^{*(n-2)}$ denotes the upper $(n-2)$-dimensional density. This means that~$x'$ belongs to the countably $(\mathcal{H}^{n-2},n-2)$-rectifiable set~$\Theta_\psi$ (see \cite[Proposition 6.1]{dal}), which in turn, by \cite[Theorem 5.2]{dal} together with the relation among $\xi_i$, means that 
\begin{equation}\label{e:106}
\mathcal{H}^{n-2}(\{x' \in \omega \ | \ \text{(1) holds and } \pm\nu_{u}(x',x_n)\neq  \nu_{u}(x',y_n)\})=0 \,.
\end{equation}
This proves our claim. Therefore, $\mathcal{H}^{n-1}$-a.e.~$x$ satisfying case (1) also fulfills~\eqref{thesis}.

Finally, suppose~(2) holds. Such points are a subset of~$J_u$, denoted here by~$A$, satisfying $\mathcal{H}^0((A)_{x'}^{e_n}) =1$ for every $x' \in \pi_{n} (A)$. Since~$(\nu_u)_{n} = 0$, an easy application of the Area Formula implies that $\mathcal{H}^{n-1}(A)=0$, and we conclude~\eqref{thesis}.
%$(x',y_n)$ is a Lebesgue point or it is not. If it is not, then since
%\[
%u(x',x_n) - u(x',y_n) = (x_n - y_n) \psi(x'),
%\]
%we deduce that $(x',y_n)$ cannot be a jump point for the function $(x',y) \mapsto (x_n - y) \psi(x')$ having the same tangent plane as $J_u$ at $x=(x',x_n)$; indeed, otherwise we immediately deduce that $x'$ is a jump point for $\psi$ with tangent plane equal to $\text{Tan}(J_u,x) \cap \{x_n =0\}$, which in turn, by writing 
%\[
%u(x',y) = u(x',x) - (x - y) \psi(x'),
%\]
%implies that $(x',y)$ is either a Lebesgue point or a jump point for $u$. But this is a contradiction. For this reason, also $x'$ can not be a jump point for $\psi$ having tangent plane equal to $\text{Tan}(J_u,x) \cap \{x_n =0\}$. Therefore, by using formula \eqref{formula} again, for every $y \neq x_n$ we write
%\[
%u(x',y) = u(x',x_n) - (x_n -y)\psi(x'),
%\]
%to deduce that $(x',y)$ is not a jump point for $u$.
\end{proof}

We are now in a position to conclude the proof of Theorem~\ref{t:KL}.

\begin{proof}[Proof of Theorem~\ref{t:KL}]
First we prove that $u_n$ is approximately differentiable $\mathcal{L}^{n}$-a.e. in $\Omega_{1}$. In view of~\cite[Theorem 3.1.4]{fed} it is enough to prove that the approximate partial derivatives~$\partial_i u_n$ exist $\mathcal{L}^n$-a.e.~in $\Omega_{1}$ for every $i =1,\dotsc,n$. Since we already know that~$u_n$ does not depend on~$x_n$, we need only to prove ~$\partial_\alpha u_n$ exist $\mathcal{L}^{n}$-a.e.~in $\Omega_{1}$ for every $\alpha = 1, \dotsc , n-1$.

Given~$\alpha$, we notice that since $u \in GSBD^2(\Omega_{1})$, setting $\xi := (e_n + e_\alpha)/\sqrt{2}$ we have that~$\partial_\xi (u\cdot \xi)$ and~$\partial_\alpha u_\alpha$ exist $\mathcal{L}^n$-a.e.~in $\Omega_{1}$ and by formula~\eqref{formula} also $\partial_n u_\alpha$ exists $\mathcal{L}^n$-a.e.~in $\Omega_{1}$. 

We now claim that 
\begin{equation}
\label{appgra}
\partial_\alpha u_n = 2\partial_\xi (u\cdot \xi) - \partial_\alpha u_\alpha -\partial_n u_\alpha \qquad \mathcal{L}^n\text{-a.e.~in }\Omega_{1} \,.
\end{equation}
Indeed, up to a set of $\mathcal{L}^n$-measure zero we have that for every $x \in \Omega_{1}$ the following holds true:
\begin{align}
& \aplim_{h \to 0} \, \frac{u(x + h\xi)\cdot \xi - u(x) \cdot \xi }{h} = \partial_\xi (u \cdot \xi)(x) \,,\label{(i)} \\[1mm]
& \aplim_{h \to 0} \, \frac{ u_\alpha(x + he_n) - u_\alpha(x)}{h} = \partial_n u_\alpha(x) \,, \label{(ii)} \\[1mm]
& \aplim_{h \to 0} \, \frac{ u_\alpha(x + he_\alpha) - u_\alpha(x)  }{h} = \partial_\alpha u_\alpha(x) \,, \label{(iii)} \\[1mm]
& \aplim_{h \to 0 } \, \psi (x' + h e_{\alpha}) = \psi(x')\,. \label{(iv)}
\end{align}
By a simple algebraic computation we can write
\begin{align}\label{e:100}
u_n & (x + he_\alpha) - u_n(x) 
\\
&
= u_n(x + he_\alpha) - u_n(x + he_\alpha + h e_n) + u_n(x + he_\alpha + h e_n) - u_n(x)  \nonumber
\\
&
= u_n(x + he_\alpha) -u_n(x + he_\alpha + h e_n) + \sqrt{2}u(x + h\sqrt{2} \xi) \cdot \xi - \sqrt{2}u(x) \cdot \xi \nonumber
\\
&
\qquad  - ( u_\alpha(x + h\sqrt{2}\xi) - u_\alpha(x) ) \,. \nonumber
\end{align}
By Proposition~\ref{p:KL2},~$u_{n}$ does not depend on~$x_{n}$. Thus,
\begin{equation}\label{e:101}
u_n(x + he_\alpha) -u_n(x + he_\alpha + h e_n) =0 \,.
\end{equation}
By~\eqref{(i)} we have that for $\mathcal{L}^n$-a.e.~$x \in \omega \times (0,1)$
\begin{equation}\label{e:102}
\aplim_{h \to 0}  \frac{\sqrt{2} u ( x + h \sqrt{2} \xi ) \cdot \xi - \sqrt{2} u(x) \cdot \xi  }{h} = 2  \partial_\xi (u \cdot \xi)(x)\,. 
\end{equation}
We re-write the last term on the right-hand side of~\eqref{e:100} as 
\[
u_\alpha(x + h\sqrt{2}\xi) - u_\alpha(x) = u_\alpha(x + h (e_n + e_\alpha) ) - u_\alpha(x + h e_\alpha) + u_\alpha(x + h e_\alpha) - u_\alpha(x) \,.
\]
Using formula \eqref{formula} we have that
\[
u_\alpha(x + h (e_n + e_\alpha) ) - u_\alpha(x + h e_\alpha) =- h \psi_\alpha(x'+ h e_\alpha) \,,   
\]
which implies, together with~\eqref{(iv)}, that for $\mathcal{L}^{n}$-a.e.~$x \in \Omega_{1}$
\begin{equation}\label{e:103}
\aplim_{h\to 0} \, \frac{u_\alpha(x + h (e_\alpha + e_n) ) - u_\alpha(x + h e_\alpha)}{h} = - \psi_{\alpha} (x') = \partial_{n} u_{\alpha} (x) \,,
\end{equation}
where~$\psi_{\alpha}$, $\alpha = 1, \ldots, n-1$ are the functions determined in~\eqref{formula}.
Therefore, combining~\eqref{(ii)},~\eqref{(iii)}, and~\eqref{e:103} we deduce that for $\mathcal{L}^n$-a.e.~$x \in \Omega_{1}$
\begin{equation}\label{e:104}
\aplim_{h \to 0} \, \frac{ u_\alpha ( x + h\sqrt{2} \xi ) - u_\alpha(x) }{h} = \partial_n u_\alpha(x) + \partial_\alpha u_\alpha(x) \,.
\end{equation}
Inserting~\eqref{e:101}--\eqref{e:104} in~\eqref{e:100} we obtain~\eqref{appgra}.
%\[
%\aplim_{h \to 0}  [u_n(x + he_\alpha) - u_n(x) ]/h = \partial_\alpha u_n(x), \ \ \mathcal{L}^n\text{-a.e. }x \in \omega \times (0,1).
%\]

 Since $\alpha \in \{1, \dotsc , n-1\}$ was arbitrary, we deduce that~$u_n$ is approximately differentiable $\mathcal{L}^n$-a.e.~in $\Omega_{1}$. Furthermore, since~$u_n$ does not depend on~$x_n$,~$u_n$ is approximately differentiable $\mathcal{H}^{n-1}$-a.e.~on~$\omega$. If we denote (with abuse of notation) $\nabla u_n = (\partial_1 u_n, \dotsc, \partial_{n-1} u_n)$, then~$\nabla u_n$ is the approximate gradient of~$u_{n}$. 

In order to prove that $\nabla u_n \in GSBD^2(\omega)$, we claim that 
 \begin{equation}\label{e:105}
\nabla u_n(x') = \big(\psi_1(x'), \dotsc, \psi_{n-1}(x') \big) \qquad \text{for $\mathcal{H}^{n-1}$-a.e.~$x'\in \omega$}\,.
\end{equation}
%where~$(\psi_\alpha)_{\alpha=1}^{n-1}$ are the functions given by~\eqref{formula}.
Once we show~\eqref{e:105}, the fact that $\nabla u_n \in GSBD^2(\omega)$ will follow from Proposition~\ref{vj}. The equality~\eqref{e:105} is a consequence of the hypothesis $e_{i,n}(u) = 0$ and of~\eqref{formula}. The latter yields that $\partial_n u_\alpha = -\psi_\alpha$ $\mathcal{L}^n$-a.e.. Hence, being~$e_{\alpha, n}(u)= 0$, we infer exactly $\partial_{\alpha} u_n = \psi_\alpha$ $\mathcal{L}^{n}$-a.e., which is~\eqref{e:105}.

In order to prove \eqref{formuladd} notice that formula \eqref{formula} becomes now
\begin{equation}
\label{e: - }
 u_{\alpha}(x',x_n) = Tr ( u_\alpha)  \Big(x', - \frac12 \Big) - \Big( x_n + \frac12 \Big) \partial_{\alpha} u_{n} (x') \qquad \text{for $\mathcal{L}^{n}$-a.e.~$(x',x_n) \in \Omega_{1}$}\,.
\end{equation}
Recalling that $\Om_{1} = \omega \times ( -\frac12, \frac12) $, by integrating both sides of~\eqref{e: - } with respect to $x_n \in (-\frac12,\frac12)$ we obtain
\[
 \overline{u}_{\alpha}(x') = Tr ( u_\alpha) \Big(x', - \frac12 \Big) - \frac{1}{2}\partial_{\alpha} u_{n} (x' ) \qquad \text{for $\mathcal{L}^{n}$-a.e.~$(x',x_n) \in \Omega_{1}$}\,.
\]
Combining the last two equalities we deduce exactly~\eqref{formuladd}. The fact that $\overline{u} \in GSBD^2(\omega)$ simply follows now by~\eqref{formuladd}.

We are finally left to prove that $J_u = (J_{\overline{u}} \cup J_{u_n} \cup J_{\nabla u_n}) \times (-\frac12,\frac12)$, for which we follow the lines of~\cite[Proposition~5.2, Step 4]{bab1}. By Proposition~\ref{vj} we already know that $J_u = \Gamma' \times (-\frac12,\frac12)$ for some $\Gamma' \subset \omega$. Thus, we only need to show that $\Gamma' = J_{\overline{u}} \cup J_{u_n} \cup J_{\nabla u_n}$ up to a set of $\mathcal{H}^{n-2}$-measure zero. First, we prove $\Gamma' \subset J_{\overline{u}} \cup J_{u_n} \cup J_{\nabla u_n}$. By looking at the proof of Proposition~\ref{vj} (see equality~\eqref{dq} and the related comments), we know that for $\mathcal{H}^{n-2}$-a.e.~$x' \in \Gamma'$, either $x' \in J_{\nabla u_n}$ or $x'$ is an approximate continuity point for~$\nabla u_n$. In the first case, we clearly have $x' \in J_{\overline{u}} \cup J_{u_n} \cup J_{\nabla u_n}$. 

Let us suppose, instead, that $x'$ is an approximate continuity point of~$\nabla u_{n}$. By rewriting formula~\eqref{formuladd} in the vectorial form as
\[
u = (\overline{u}_1, \dotsc, \overline{u}_{n-1}, u_n)  - x_n   (\partial_1 u_n, \dotsc, \partial_{n-1} u_n, 0)\,,
\]
then, it is easy to see that, being~$x'$ a point of approximate continuity for~$\nabla u_n$, the fact that $x \in J_u$ forces $x' \in J_{\overline{u}} \cup J_{u_n}$. This gives the first inclusion $\Gamma' \subset J_{\overline{u}} \cup J_{u_n} \cup J_{\nabla u_n}$. 

To prove $ J_{\overline{u}} \cup J_{u_n} \cup J_{\nabla u_n} \subset \Gamma'$ we argue as follows: if $x' \in J_{u_n}$, then, by definition of~$J_{u_{n}}$,
%(see Definition \ref{dju3})
 we have 
\begin{displaymath}
\mathcal{H}^{n-1} \Big( \Big(\{x'\} \times \Big(-\frac12,\frac12 \Big) \Big) \cap J_u \Big) = 1 \,.
\end{displaymath}
Hence, we can reduce ourselves to prove the inclusion in the case $x' \in J_{\overline{u}} \cup J_{\nabla u_n}$. Since $J_{u} = \Gamma' \times (-\frac12, \frac12)$, we can choose $\tilde{x}_n \in (-\frac12, \frac12)$ such that $v(\cdot) := u(\cdot,\tilde{x}_n) \in GSBD^2(\omega)$ and $J_v = \Gamma'$ up to a set of $\mathcal{H}^{n-2}$-measure zero in~$\omega$. Then, formula~\eqref{formuladd} says that 
\[
\Theta_{\nabla u_n} \subset \Theta_{\overline{u}} \cup \Theta_{v} \qquad  \text{and} \qquad   \Theta_{\overline{u}} \subset \Theta_{\nabla u_n} \cup \Theta_{v} \,.
\]
Since by \cite[Theorem~6.2]{dal} we know that $\mathcal{H}^{n-2}(J_f \triangle \Theta_f)=0$ for any $f \in GSBD^2(\omega)$, we deduce that, up to an $\mathcal{H}^{n-2}$-negligible set in~$\omega$,
\begin{equation}
\label{fi}
J_{\nabla u_n} \setminus J_{\overline{u}} \subset J_v = \Gamma' \qquad \text{and} \qquad J_{\overline{u}} \setminus J_{\nabla u_n} \subset J_v = \Gamma' \,.
\end{equation}
It remains to prove that
\begin{equation}
\label{si}
J_{\nabla u_n} \cap J_{\overline{u}} \subset \Gamma' \,.
\end{equation}
If $x' \in J_{\nabla u_n} \cap J_{\overline{u}}$ and $J_{\nabla u_n}, J_{\overline{u}}$ have the same tangent plane at~$x'$, by arguing as in Proposition~\ref{vj}, formula \eqref{formuladd} implies that $\mathcal{H}^{n-1} ((\{x'\} \times (-\frac12,\frac12)) \cap J_u)=1$, so that $x' \in \Gamma'$. If, instead, $x' \in J_{\nabla u_n} \cap J_{\overline{u}}$ and $J_{\nabla u_n}, J_{\overline{u}}$ have different tangent planes at~$x'$, arguing again as in the proof of~\eqref{e:106} in Proposition~\ref{vj} we obtain that the set of such points has negligible $\mathcal{H}^{n-2}$-measure. This gives~\eqref{si} and the conclusion of the Theorem.
\end{proof}

We are now in a position to prove the $\Gamma$-convergence result of Theorem~\ref{t:limit}.

\begin{proof}[Proof of Theorem~\ref{t:limit}]
We follow here the steps of~\cite[Theorem~5.1]{bab1}. Since the convergence in measure is metrizable, we can show the $\Gamma$-convergence in terms of converging sequences. As for the $\Gamma$-liminf, for every infinitesimal sequence~$\rho_{k}$, every $u\colon \Om_{1} \to \R^{n}$, and every $u_{k} \in GSBD^{2}(\Om_{1})$ such that $u_{k} \to u$ in measure and
\begin{displaymath}
\liminf_{k\to\infty}\, \E_{\rho_{k}}(u_{k}) <+\infty\,,
\end{displaymath} 
we have, in view of Proposition~\ref{p:compactness}, that $u \in \mathcal{KL}(\Om_{1})$.
%GSBD^{2}(\Om_{1})$ with $e_{i,n}(u) = 0$ for $i= 1, \ldots, n$ and~$(\nu_{u})_{n} =0$. Thus, thanks to Theorem~\ref{t:KL} we infer that $u \in \mathcal{KL}(\Om_{1})$. 
Furthermore, arguing as in the proof of Proposition~\ref{p:compactness} we get that
\begin{equation}\label{e:liminf1}
\HH^{n-1}(J_u)  \leq \liminf_{k\to\infty} \int_{J_{u_{k}}} \phi_{\rho_{k}} ( \nu_{u_{\rho_{k}}}) \, \di \HH^{n-1}.
\end{equation}

For every $v \in GSBD^{2}(\Om_{1})$ let us set~$\overline{e}(v): = (e_{\alpha\beta}(v))_{\alpha, \beta=1}^{n-1}$. Then, by definition~\eqref{e:G_rho}--\eqref{e:griffith3} of~$\E_{\rho}$ we have
\begin{equation}\label{e:liminf2}
\begin{split}
\int_{\Om_1} \C_{0} e(u) {\, \cdot\,} e(u) \, \di x & \leq \liminf_{k \to \infty} \int_{\Om_1} \C_{0} \overline{e}(u_{k}) {\, \cdot \,} \overline{e}(u_{k}) \, \di x 
\\
&
\leq \liminf_{k\to\infty} \int_{\Om_1} \C e^{\rho_{k}}(u_{k}) {\, \cdot\,} e^{\rho_{k}}(u_{k}) \, \di x \,.
\end{split}
\end{equation}
Hence, combining~\eqref{e:liminf1} and~\eqref{e:liminf2} we infer that
\begin{displaymath}
\E_{0}(u) \leq \liminf_{k\to \infty}\, \E_{\rho_{k}} (u_{k})\,,
\end{displaymath}
which in turn implies that $\E_{0} \leq \Gamma$-$\liminf_{\rho \to 0} \E_{\rho}$.

We conclude with the $\Gamma$-limsup inequality. Let $u \in GSBD^{2}( \Om_1 )$. If $u \notin \mathcal{KL}(\Om_1)$, then $\E_{0}(u) = +\infty$ and there is nothing to show. If $u \in \mathcal{KL}(\Om_1)$, let us fix two sequences $h_{\rho, 1}, h_{\rho, 2} \in C^{\infty}_{c}(\om)$ such that
\begin{align}
& h_{\rho, 1} \to - \frac{\lambda}{\lambda + 2\mu} \sum_{\alpha=1}^{n-1} \partial_{\alpha} \overline{u}_{\alpha}\quad \text{ in $L^{2}(\om)$}\,, \label{e:recovery} \\[1mm]
& h_{\rho, 2} \to - \frac{\lambda}{\lambda + 2\mu} \sum_{\alpha=1}^{n-1} \partial_{\alpha} (\partial_{\alpha} u_{n}) \quad\text{in $L^{2}(\om)$}\,, \label{e:recovery2-1}\\[1mm]
& \vphantom{\int} \rho h_{\rho, 1},\, \rho h_{\rho, 2}, \, \rho \nabla h_{\rho, 1}, \, \rho \nabla h_{\rho, 2} \to 0 \qquad \text{in $L^{2}(\om)$}\,.\label{e:recovery3}
\end{align}
In particular,~\eqref{e:recovery}--\eqref{e:recovery3} and the hypothesis $u \in \mathcal{KL}(\Om_1)$ imply that
\begin{displaymath}
h_{\rho , 1}  - x_{n} h_{\rho , 2} \to -\frac{\lambda}{\lambda + 2 \mu} \sum_{\alpha=1}^{n-1} e_{\alpha, \alpha} (u) \qquad \text{in $L^{2}(\omega)$} \,.
\end{displaymath}

For every $x = ( x', x_{n}) \in \Om_1$ we define
\begin{equation}\label{e:recovery2}
u_{\rho} (x) := u (x) + \left( 0, \ldots, 0, \rho^{2} x_{n} \left[ h_{\rho, 1} (x') - \frac{x_{n}}{2}  h_{\rho, 2} (x') \right]  \right) \,.
\end{equation}
Then, $u_{\rho} \in GSBD^{2}(\Om_1)$, $J_{u_{\rho}} = J_{u}$ for every $\rho>0$, and $(\nu_{u_{\rho}})_{n} = 0$ on~$J_{u_{\rho}}$. Moreover, $u_{\rho} \to u$ in measure on~$\Om_1$, since $\rho^{2} x_{n} h_{\rho,1} \to 0 $ and $\rho^{2} x^{2}_{n}  h_{\rho, 2} \to 0$ in $L^{2}(\Om_1)$.

We now write the components of $e^{\rho}(u_{\rho})$. Since $u \in \mathcal{KL}(\Om_1)$, for every $\alpha, \beta = 1, \ldots, n-1$ we have
\begin{eqnarray*}
&& \displaystyle e_{\alpha, \beta}^{\rho} (u_{\rho}) = e_{\alpha, \beta} (u) \,, \\
&& \displaystyle  e^{\rho}_{\alpha, n} (u_{\rho}) = \frac{\rho}{2} \, x_{n} \left[ \partial_{\alpha} h_{\rho , 1}  - \frac{x_{n}}{2} \partial_{\alpha} h_{\rho , 2} \right]\,, \\
&& \displaystyle  e^{\rho}_{n,n} (u_{\rho}) = h_{\rho , 1}  - x_{n}  h_{\rho , 2}\,.
\end{eqnarray*}

Therefore, we have
\begin{align}\label{e:limsup1}
 \E_{\rho}(u_{\rho})  & =   \frac{1}{2} \int_{\Om_1} \C e^{\rho} (u_{\rho}) {\, \cdot\,} e^{\rho}(u_{\rho}) \, \di x + \int_{J_{u_{\rho}}}  \phi_{\rho} (\nu_{u_{\rho}} )  \, \di \HH^{n-1}
\\
&
= \frac{1}{2} \int_{\Om_1} \bigg[\lambda \bigg( \sum_{\alpha=1}^{n-1} e_{\alpha,\alpha}(u) \bigg)^{2} + 2 \lambda \bigg( \sum_{\alpha=1}^{n-1} e_{\alpha,\alpha}(u) \bigg) ( h_{\rho , 1} - x_{n}  h_{\rho , 2} ) \nonumber
\\
&
\qquad \vphantom{\int} + \lambda ( h_{\rho , 1} 
 - x_{n}  h_{\rho , 2})^{2} 
+ 2\mu e_{\alpha\beta}^{2} (u)  \nonumber
\\
&
\qquad \vphantom{\int} + \mu \rho^{2} x^{2}_{n} \left( \partial_{\alpha} h_{\rho, 1}  - \frac{x_{n}}{2} \partial_{\alpha} h_{\rho , 2} \right)^{2} + 2 \mu \left(h_{\rho, 1}  - x_{n}  h_{\rho,2}\right) ^{2} \bigg] \di x \nonumber
\\
&
\qquad + \HH^{n-1}( J_{u}) \,. \nonumber
\end{align}
Passing to the limit as~$\rho \to 0$ in~\eqref{e:limsup1} we get that
\begin{align*}
\lim_{\rho \to 0} \, \E_{\rho} (u_{\rho}) & = \frac{1}{2} \int_{\Om_1}\bigg[ \Big( \lambda - \frac{2\lambda^{2}}{\lambda + 2\mu} + \frac{\lambda^{3}}{( \lambda + 2\mu)^{2} } + \frac{ 2 \mu \lambda^{2}}{(\lambda + 2\mu)^{2}}\Big) (tr (e (u)))^{2} + 2\mu | e(u)| ^{2} \bigg] \, \di x 
\\
&
\qquad + \HH^{n-1}(J_{u})
= \frac{1}{2} \int_{\Om_1} \C_{0} e(u) {\, \cdot\,} e(u) \, \di x + \HH^{n-1}(J_{u}) = \E_{0}(u) \,,
\end{align*}
and the proof is thus complete.
\end{proof}

In the following corollary we show that we can naturally handle the presence of boundary conditions satisfying the properties of~\eqref{e:KL}. Despite the result follows directly from Theorem~\ref{t:limit}, it justifies the study of convergence of minima and minimizers, considered in Theorem~\ref{t:compactness2} and Corollary~\ref{c:conv_minimizer} below.

\begin{corollary}
\label{c:bc}
Let $g \in \mathcal{KL} (\R^{n} ) \cap H^{1}(\R^{n}; \R^{n})$, and let us define, for $u \in GSBD^{2} ( \Om_1 )$,
\begin{eqnarray}\label{e:Eg1}
&& \E_{\rho}^{g} (u) :=  \E_{\rho} (u) + \mathcal{H}^{n-1} \bigg( \big \{ Tr (u) \neq Tr (g) \big \} \cap \bigg( \partial\om \times \bigg(-\frac12, \frac12 \bigg) \bigg) \bigg) \,, \\[1mm]
&& \E_{0}^{g} (u) :=  \E_{0} (u) + \mathcal{H}^{n-1} \bigg( \big \{ Tr (u) \neq Tr (g) \big \} \cap \bigg( \partial\om \times \bigg( -\frac12 , \frac12 \bigg) \bigg) \bigg) \,. \label{e:Eg2}
\end{eqnarray}
%where~$Tr (u)$ denotes the trace of~$u$ on~$\partial\Om_1$. 
Then,~$\E_{\rho}^{g}$ $\Gamma$-converges to~$\E^{g}_{0}$ w.r.t.~the topology induced by the convergence in measure in~$\Om_1$.
\end{corollary}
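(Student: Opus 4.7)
The strategy is to reduce the statement to Theorem~\ref{t:limit} by extending admissible displacements across $\partial\omega \times (-\tfrac12, \tfrac12)$ with the boundary datum $g$, so that the mismatch set $\{Tr(u)\neq Tr(g)\}$ becomes a genuine jump of the extension in a larger reference configuration. Concretely, I fix a bounded Lipschitz domain $\omega' \Supset \omega$ and let $\Omega'_1 := \omega' \times (-\tfrac12, \tfrac12)$. For $u \in GSBD^2(\Omega_1)$ with finite boundary mismatch, define $\tilde u \in GSBD^2(\Omega'_1)$ by $\tilde u = u$ on $\Omega_1$ and $\tilde u = g$ on $\Omega'_1 \setminus \overline{\Omega_1}$. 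Since $g \in H^1(\R^n;\R^n)$ has no jumps, the jump set of $\tilde u$ decomposes, up to $\HH^{n-1}$-null sets, as $J_u \cup \big( \{Tr(u) \neq Tr(g)\} \cap \partial\omega \times (-\tfrac12, \tfrac12) \big)$, with normal tangent to $\partial\omega$ on the boundary piece.

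The central bookkeeping observation is that $g \in \mathcal{KL}(\R^n)$ implies $e^{\rho}(g) = e(g)$, so the elastic contribution of $\tilde u$ on $\Omega'_1 \setminus \Omega_1$ equals $C_g := \frac{1}{2} \int_{\Omega'_1 \setminus \Omega_1} \C e(g) \cdot e(g) \, dx$ independently of $\rho$. Moreover, along $\partial\omega \times (-\tfrac12,\tfrac12)$ the normal has the form $(\nu',0)$, so $\phi_\rho(\nu) = 1$. Together these give the identities
\begin{equation*}
\E_\rho(\tilde u) = \E_\rho^g(u) + C_g, \qquad \E_0(\tilde u) = \E_0^g(u) + C_g,
\end{equation*}
and analogously for any sequence extended by $g$. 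Hence the $\Gamma$-convergence of $\E_\rho^g$ to $\E_0^g$ reduces to that of $\E_\rho$ to $\E_0$ on $\Omega'_1$ restricted to functions that coincide with $g$ outside $\Omega_1$.

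For the $\Gamma$-liminf, given $u_\rho \to u$ in measure on $\Omega_1$ with $\liminf \E_\rho^g(u_\rho) < \infty$, I pass to a subsequence realizing the liminf, so that $\sup_\rho \HH^{n-1}(\{Tr(u_\rho) \neq Tr(g)\}) < \infty$, and extend each $u_\rho$ by $g$ to obtain $\tilde u_\rho \to \tilde u$ in measure on $\Omega'_1$ with $\sup_\rho \E_\rho(\tilde u_\rho) < \infty$. Applying the $\Gamma$-liminf of Theorem~\ref{t:limit} on $\Omega'_1$ and subtracting $C_g$ yields $\E_0^g(u) \leq \liminf \E_\rho^g(u_\rho)$.

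For the $\Gamma$-limsup, if $u \notin \mathcal{KL}(\Omega_1)$ there is nothing to prove; otherwise, I use the explicit recovery sequence from the proof of Theorem~\ref{t:limit} given by formula~\eqref{e:recovery2}. The crucial point is that the approximating profiles $h_{\rho,1}, h_{\rho,2}$ can be taken in $C_c^\infty(\omega)$, so the perturbation $u_\rho - u$ vanishes identically on $\partial\omega \times (-\tfrac12, \tfrac12)$. Hence $Tr(u_\rho) = Tr(u)$ on the lateral boundary and $\{Tr(u_\rho) \neq Tr(g)\} = \{Tr(u) \neq Tr(g)\}$ for every $\rho$, so the surface boundary term is constant along the sequence. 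Combining with $\E_\rho(u_\rho) \to \E_0(u)$ from Theorem~\ref{t:limit} gives $\E_\rho^g(u_\rho) \to \E_0^g(u)$. The only delicate point is justifying that $\tilde u$ belongs to $GSBD^2(\Omega'_1)$ with the asserted jump structure, which follows from the trace theory recalled in Section~\ref{s:preliminaries} together with the fact that $g$ has no singular part.
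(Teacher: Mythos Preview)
Your extension-by-$g$ strategy is exactly the one the paper uses, and your treatment of the $\Gamma$-limsup is correct and coincides with the paper's: since $h_{\rho,1}, h_{\rho,2}\in C_c^\infty(\omega)$, the recovery sequence leaves the lateral trace unchanged and the boundary penalty is carried along as a constant.

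However, your $\Gamma$-liminf argument contains a genuine bookkeeping error. You claim the pair of identities
\[
\E_\rho(\tilde u) = \E_\rho^g(u) + C_g,\qquad \E_0(\tilde u) = \E_0^g(u) + C_g,
\]
with the \emph{same} constant $C_g=\tfrac12\int_{\Omega'_1\setminus\Omega_1}\C e(g)\cdot e(g)\,\di x$. The first identity is correct (since $e^\rho(g)=e(g)$ and $\phi_\rho((\nu',0))=1$), but the second is not: the bulk part of~$\E_0$ is built with the reduced tensor~$\C_0$, so the outer contribution is $\tfrac12\int_{\Omega'_1\setminus\Omega_1}\C_0 e(g)\cdot e(g)\,\di x$, which is strictly smaller than~$C_g$ whenever $\lambda\neq 0$ and $\mathrm{tr}\,e(g)\not\equiv 0$. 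Consequently, applying the $\Gamma$-liminf of Theorem~\ref{t:limit} on~$\Omega'_1$ and subtracting only yields
\[
\E_0^g(u)\;\le\;\liminf_{\rho\to 0}\,\E_\rho^g(u_\rho)\;+\;\Big(C_g-\tfrac12\!\int_{\Omega'_1\setminus\Omega_1}\!\C_0 e(g)\cdot e(g)\,\di x\Big),
\]
with a nonnegative remainder that you cannot absorb.

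The fix is to avoid treating Theorem~\ref{t:limit} as a black box on~$\Omega'_1$ and instead split bulk and surface, as the paper does: use the extension~$\tilde u_\rho$ only for the surface term, obtaining $\HH^{n-1}(J_{\tilde u}\cap\Omega'_1)\le\liminf\int_{J_{\tilde u_\rho}\cap\Omega'_1}\phi_\rho(\nu_{\tilde u_\rho})\,\di\HH^{n-1}$ (which encodes the boundary penalty since $(\nu_{\tilde u})_n=0$ on $\partial\omega\times(-\tfrac12,\tfrac12)$), and run the bulk lower bound~\eqref{e:liminf2} on~$\Omega_1$ alone. This is precisely how the paper's proof proceeds.
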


\begin{proof}
We consider~$\widetilde{\om} \subseteq \R^{n-1}$ smooth, bounded, and such that $\om \Subset \widetilde{\om}$, and define $\widetilde{\Om} := \widetilde{\om} \times \big ( -\frac12, \frac12 \big)$. For every $u \in GSBD^{2}(\Om_1)$, we consider the extension
\begin{equation}\label{e:extension}
\widetilde{u} := \left\{ \begin{array}{ll}
u & \text{in $\Om_1$}\,,\\
g & \text{in $\widetilde{\Om} \setminus\Om_1$}\,.
\end{array}\right.
\end{equation}
Then, we can rewrite~$\E^{g}_{\rho} (u) $ as %and~$\E^{g}_{0} (u) $ as
\begin{eqnarray*}
\displaystyle && \E^{g}_{\rho}(u) := \frac12 \int_{\Om_{1}} \C  e^{\rho} (\tilde{u}) {\, \cdot\,} e^{\rho} (\tilde{u}) \, \di x + \int_{J_{\widetilde{u}}\cap \widetilde{\Om} } \phi_{\rho}(\nu_{\widetilde{u}}) \, \di \HH^{n-1}\,. %\\[1mm]
%\displaystyle &&  \vphantom{\int}\E^{g}_{0}(u) := \frac12 \int_{\Om_{1}} \C_{0}  e (\tilde{u}) {\, \cdot\,} e (\tilde{u}) \, \di x + \HH^{n-1} ( J_{\widetilde{u}} \cap \widetilde{\Om} ) \,.
\end{eqnarray*}

With this notation at hand, we can show the $\Gamma$-liminf inequality by following step by step the proof of Theorem~\ref{t:limit}. Given~$u_{\rho} \in GSBD^{2}(\Om_1)$ such that~$u_{\rho} $ converges in measure to~$u \in GSBD^{2}(\Om_1)$, we consider their extensions~$\widetilde{u}_{\rho}, \widetilde{u} \in GSBD^{2}(\widetilde{\Om})$. If
\begin{displaymath}
\sup_{\rho>0} \, \E^{g}_{\rho}(u_{\rho}) <+\infty\,,
\end{displaymath} 
we deduce that $e (u_{\rho}) \rightharpoonup e(u)$ weakly in~$L^{2}( \Om_1 ; \mathbb{M}^{n}_{s})$ and $u \in \mathcal{KL}(\Om_1)$, so that also $\widetilde{u} \in \mathcal{KL}(\widetilde{\Om})$. Furthermore, the bulk energy satisfies
\begin{displaymath}
\int_{\Om_1} \C_{0} e (u) {\, \cdot\,} e (u) \, \di x \leq \liminf_{\rho \to 0} \int_{\Om_1} \C e^{\rho}(\tilde{u}_{\rho}) {\, \cdot\,} e^{\rho} (\tilde{u}_{\rho}) \, \di x\,.
\end{displaymath}
As in~\eqref{e:liminf1} we have that
\begin{displaymath}
\HH^{n-1} (J_{\widetilde{u}} \cap \widetilde{\Om} ) \leq \liminf_{\rho \to 0} \int_{J_{\widetilde{u}_{\rho}} \cap \widetilde{\Om}} \phi_{\rho} (\nu_{\widetilde{u}_{\rho}}) \, \di \HH^{n-1}.
\end{displaymath}
Noticing that $\HH^{n-1} (J_{\tilde{u}} \cap (\widetilde{\Om} \setminus \Om_{1} ) ) = 0 $ and 
\begin{displaymath}
J_{\tilde{u}} \cap \partial \om \times \bigg( -\frac12 , \frac12 \bigg) =  \big \{ Tr (u) \neq Tr (g) \big \} \cap \bigg(\partial\om \times \bigg( -\frac12 , \frac12 \bigg) \bigg) \,,
\end{displaymath}
we deduce that $\E^{g}_{0}(u) \leq \liminf_{\rho\to 0} \E^{g}_{\rho}(u_{\rho})$.

A recovery sequence can be constructed as in~\eqref{e:recovery2}, where we modify a function~$u \in \mathcal{KL}(\Om_{1})$ within~$\Om_{1}$ by considering~$h_{\rho, 1}, h_{\rho,2} \in C_{c} (\omega)$ as in~\eqref{e:recovery}, so that~$u$ remains unchanged on~$\partial\om \times \big (-\frac12 , \frac12 \big )$.
\end{proof}

We now discuss the convergence of minimizers of the functionals~$\E^{g}_{\rho}$. To do this, we recall here the GSBD-compactness result obtained in~\cite[Theorem~1.1]{cris2} (see also~\cite{new}).
% In Appendix~\ref{appendix} we present a more self-contained proof of such a result.

\begin{theorem}
\label{t:comfk}
Let $U \subseteq \R^{n}$ be an open bounded subset of~$\R^{n}$, let $\phi \colon \mathbb{R}^+ \to \mathbb{R}^+$ be an increasing function such that
\[
\lim_{t \to +\infty} \frac{\phi(t)}{t} = +\infty,
\]
and let $u_\rho \in GSBD^2(U)$ be such that 
\[
\sup_{\rho>0} \int_{U} \phi(|e(u_\rho) |) \, \di x + \mathcal{H}^{n-1}(J_{u_\rho}) < \infty \,.
\]
Then, there exists a subsequence, still denoted by~$u_\rho$, such that the set 
\[
A := \{x \in U \ | \ |u_\rho(x)| \to +\infty \text{ as } \rho \to 0^+  \}
\]
has finite perimeter, $u_\rho \to u$  a.e.~in~$U \setminus A$ and $e(u_{\rho}) \rightharpoonup e(u)$ weakly in~$L^{1}(U\setminus A; \M^{n}_{s})$  for some function $u \in GSBD^2(U)$ with $u=0$ in~$A$. Furthermore,
\begin{displaymath}
\HH^{n-1}(J_{u} \cup \partial^{*}A) \leq \liminf_{\rho \to 0} \, \HH^{n-1}(J_{u_{\rho}}) \,.
\end{displaymath} 
\end{theorem}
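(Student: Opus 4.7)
\medskip

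\textbf{Proof proposal for Theorem~\ref{t:comfk}.}

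The plan is to follow the strategy developed for $GSBD$-compactness via a localized Korn--Poincar\'e inequality combined with a covering argument, isolating the set where the sequence escapes to infinity. First, I would invoke the superlinearity of~$\phi$ and the Dunford--Pettis theorem to deduce that~$(e(u_\rho))_\rho$ is weakly relatively compact in $L^1(U;\M^n_s)$; denote by~$f$ its (putative) weak limit. Second, I would fix a small parameter~$\eta>0$ and, for each dyadic ball $B_r(x) \Subset U$, distinguish between \emph{good balls}, where $\HH^{n-1}(J_{u_\rho} \cap B_r(x)) \leq \eta\, r^{n-1}$, and \emph{bad balls}, where the opposite inequality holds. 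The $GSBD$ Korn--Poincar\'e inequality (in the form of Chambolle--Conti--Iurlano or Friedrich) guarantees that, on a good ball, there exist an infinitesimal rigid motion~$a_{\rho,B}$ and an exceptional set~$E_{\rho,B} \subset B$ with $\Ll^n(E_{\rho,B}) \leq C\eta\, r^n$ such that $u_\rho - a_{\rho,B}$ is controlled in $L^{n/(n-1)}(B\setminus E_{\rho,B};\R^n)$ by the energy bound.

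Next, I would define the exceptional set~$A$ as the collection of points $x \in U$ for which, along the subsequence, no uniform bound on $u_\rho$ modulo infinitesimal rigid motions can be achieved at any scale: more concretely, $x \in A$ iff every sufficiently small ball around~$x$ is bad for infinitely many~$\rho$. A Vitali covering argument applied to the family of bad balls, together with the uniform bound $\sup_\rho \HH^{n-1}(J_{u_\rho}) < \infty$, yields $\HH^{n-1}(\partial^* A) < \infty$, hence~$A$ has finite perimeter. Outside~$A$, I would patch the local rigid motions~$a_{\rho,B}$ by a Chambolle--Crismale-type \emph{matching} procedure: on overlapping good balls the difference of two candidate rigid motions is controlled by the $L^1$-bound on $e(u_\rho)$ and on the fact that the exceptional sets have small measure, so one can inductively choose the~$a_{\rho,B}$ compatible across a fixed dyadic decomposition of $U \setminus A$. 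This produces a modification~$\tilde u_\rho := u_\rho - a_\rho$ (with~$a_\rho$ a locally piecewise-infinitesimal-rigid field) that is bounded in $L^{n/(n-1)}_{\mathrm{loc}}(U \setminus A)$, so that a $GSBD^2$ compactness argument analogous to~\cite[Theorem~11.3]{dal} gives the existence of $u \in GSBD^2(U)$ with $\tilde u_\rho \to u$ a.e.\ and $e(\tilde u_\rho)=e(u_\rho) \rightharpoonup e(u)$ weakly in~$L^1(U\setminus A;\M^n_s)$. Since on $A$ by definition $|u_\rho|\to +\infty$, one rescales or sets $u = 0$ there, in accord with the statement.

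Finally, for the lower semicontinuity $\HH^{n-1}(J_u \cup \partial^* A) \leq \liminf_\rho \HH^{n-1}(J_{u_\rho})$, I would combine the standard $GSBD$ lower semicontinuity of the jump measure on~$U\setminus A$ (via slicing and the Ambrosio--Braides--Chambolle lemma) with the fact that~$\partial^* A$ is, by construction, accumulated by~$J_{u_\rho}$: at every~$x \in \partial^* A$ the sequence exhibits asymptotic jumps of order $|u_\rho| \to \infty$, forcing local concentration of~$\HH^{n-1}\restr J_{u_\rho}$. A disjoint comparison between the two contributions then gives the desired estimate.

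The main obstacle I anticipate is the \emph{matching} step: globally reconciling the locally chosen infinitesimal rigid motions~$a_{\rho,B}$ outside~$A$ requires careful control of the overlap sets and of the dependence on the scale, since the exceptional sets~$E_{\rho,B}$ must be handled so as not to accumulate under the covering. A second delicate point is proving the perimeter bound for~$A$: one needs to extract a finite-overlap subfamily of bad balls whose boundaries approximate~$\partial^* A$ in a measure-theoretic sense, and to ensure that the perimeter contributed by these boundaries is genuinely charged by~$\HH^{n-1}(J_{u_\rho})$ rather than being double-counted. These two issues are what make the result substantially deeper than the usual $SBV$ or $SBD$ compactness statements.
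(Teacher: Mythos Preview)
The paper does not prove Theorem~\ref{t:comfk}: it is stated only as a tool, with the sentence ``we recall here the GSBD-compactness result obtained in~\cite[Theorem~1.1]{cris2} (see also~\cite{new})'' immediately preceding it, and no proof is given. So there is no argument in the paper to compare your proposal against; the authors simply quote the result from the literature and use it as a black box in the subsequent Theorem~\ref{t:compactness2} and Corollary~\ref{c:conv_minimizer}.

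For what it is worth, your outline is broadly consonant with the actual strategy in~\cite{cris2}: Dunford--Pettis for equi-integrability of the symmetric gradients, a piecewise-Korn/Korn--Poincar\'e inequality to subtract local rigid motions on ``good'' regions, identification of the escape set~$A$ as the complement of the good region, and lower semicontinuity of the jump plus~$\partial^{*}A$. The two points you flag as delicate (globally matching the local rigid motions, and showing that~$\partial^{*}A$ is genuinely charged by the jump measures without double counting) are exactly the technical heart of that paper; in particular, the construction of~$A$ there is not quite the one you describe via bad balls, but is obtained through the piecewise-Korn decomposition and a Caccioppoli-partition argument, which handles both issues simultaneously. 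If you intend to actually write out a proof, you should consult~\cite{cris2} directly rather than reconstruct it, since the details are substantial and not reproduced here.
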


From Theorem~\ref{t:comfk} we deduce the convergence of minima and minimizers.

\begin{theorem}\label{t:compactness2}
Let $g \in \mathcal{KL} ( \R^{n} ) \cap H^{1}(\R^{n}; \R^{n} )$, and let~$\E^{g}_{\rho}$ be the sequence of functionals defined in~\eqref{e:Eg1}. Assume that $u_{\rho} \in GSBD^{2}( \Om_1)$ satisfies
\begin{equation}\label{e:hpc}
\liminf_{\rho \to 0} \, \E^{g}_{\rho} (u_{\rho}) < +\infty\,.
\end{equation}
Then, there exists a subsequence, still denoted by~$u_{\rho}$, such that the set
\begin{displaymath}
A := \{ x \in \Om_1 : \, | u_{\rho} (x) | \to +\infty \text{ as $\rho \to 0$}\}
\end{displaymath}
is a set of finite perimeter. Moreover, there exist $A' \subseteq \om$ and $u \in \mathcal{KL}(\Om_1)$ with $u=0$ in~$A$ such that
\begin{align}
& A = A' \times \bigg( - \frac12 , \frac12 \bigg)\,, \label{e:c1} \\
& u_{\rho} \to u \qquad \text{a.e.~in~$\Om_1 \setminus A$}\,, \label{e:c2} \\
& e (u_{\rho}) \rightharpoonup e(u) \qquad \text{weakly in~$L^{2}(\Om_1 \setminus A; \M^{n}_{s})$}\,, \label{e:c3} \\
& \HH^{n-1}(J_{u} \cup \partial^{*} A ) + \mathcal{H}^{n-1} \bigg(  \big\{ Tr (u) \neq Tr (g) \big\}   \cap \bigg( \partial\om \times \bigg( - \frac12 , \frac12 \bigg) \bigg) \bigg) \label{e:c4} 
\\
&
 \quad \leq \liminf_{\rho \to 0} \, \int_{J_{u_{\rho}}} \phi_{\rho}( \nu_{u_{\rho}}) \, \di \HH^{n-1} + \mathcal{H}^{n-1} \bigg( \big \{ Tr (u_{\rho}) \neq Tr (g) \big\} \cap \bigg ( \partial\om \times \bigg( -\frac12 , \frac12 \bigg) \bigg) \bigg) . \nonumber
\end{align}
\end{theorem}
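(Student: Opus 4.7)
The plan is to reduce to the GSBD-compactness of Theorem~\ref{t:comfk} by extending $u_\rho$ by $g$ outside $\Om_1$, and then to derive both the Kirchhoff-Love structure of the limit and the product structure of the blow-up set $A$ through the same anisotropic surface lower-semicontinuity argument used in the proof of Proposition~\ref{p:compactness}.

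First I would fix $\widetilde\om \subseteq \R^{n-1}$ open, bounded, Lipschitz, with $\om \Subset \widetilde\om$, set $\widetilde\Om := \widetilde\om \times (-\tfrac12,\tfrac12)$, and let $\widetilde u_\rho$ be the extension of $u_\rho$ by $g$ as in~\eqref{e:extension}. Since $g \in H^1(\R^n;\R^n)$, hypothesis~\eqref{e:hpc} yields the uniform bound
$$\sup_\rho \int_{\widetilde\Om} |e(\widetilde u_\rho)|^2 \, \di x + \HH^{n-1}(J_{\widetilde u_\rho}) < +\infty.$$
Applying Theorem~\ref{t:comfk} with $\phi(t)=t^2$, I would obtain, up to a subsequence, a set $\widetilde A \subset \widetilde\Om$ of finite perimeter and $\widetilde u \in GSBD^2(\widetilde\Om)$ with $\widetilde u\equiv 0$ on $\widetilde A$, $\widetilde u_\rho \to \widetilde u$ a.e.\ on $\widetilde\Om\setminus\widetilde A$, and $e(\widetilde u_\rho)\rightharpoonup e(\widetilde u)$ weakly in $L^1$. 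Because $\widetilde u_\rho=g$ takes finite values outside $\Om_1$, one has $\widetilde A\subseteq \Om_1$. Setting $A:=\widetilde A$ and $u:=\widetilde u|_{\Om_1}$ would give~\eqref{e:c2}, while a further subsequence upgrades the weak $L^1$-convergence of $e(\widetilde u_\rho)$ to weak $L^2$-convergence via the above $L^2$-bound, producing~\eqref{e:c3}.

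Next, to verify $u\in\mathcal{KL}(\Om_1)$, I would follow the argument of Proposition~\ref{p:compactness}. From the definition of $\E_\rho$ and~\eqref{e:hpc}, $\|e_{i,n}(u_\rho)\|_{L^2(\Om_1)}^2 \leq C\rho^2\to 0$, so $e_{i,n}(u)=0$ on $\Om_1\setminus A$ by weak convergence and on $A$ because $u\equiv 0$. For the normal condition, I would invoke the anisotropic enhancement of Theorem~\ref{t:comfk} with the densities $\phi_{\widetilde\rho}$, namely
$$\int_{(J_{\widetilde u}\cup \partial^*\widetilde A)\cap\widetilde\Om} \phi_{\widetilde\rho}(\nu)\,\di \HH^{n-1} \leq \liminf_\rho \int_{J_{\widetilde u_\rho}} \phi_{\widetilde\rho}(\nu_{\widetilde u_\rho})\,\di \HH^{n-1}.$$
Since $\phi_{\widetilde\rho}\leq \phi_\rho$ for $\rho\leq \widetilde\rho$, the right-hand side is bounded uniformly in $\widetilde\rho$ by $\liminf_\rho \int_{J_{\widetilde u_\rho}} \phi_\rho(\nu_{\widetilde u_\rho})\,\di \HH^{n-1}\leq C$. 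Letting $\widetilde\rho\searrow 0$ and using $\phi_{\widetilde\rho}(\nu)\to+\infty$ whenever $\nu_n\neq 0$, exactly as in the last step of Proposition~\ref{p:compactness}, would yield $(\nu_u)_n=0$ $\HH^{n-1}$-a.e.\ on $J_u$ and $(\nu_A)_n=0$ $\HH^{n-1}$-a.e.\ on $\partial^*A\cap\Om_1$. The latter forces $D_n\chi_A=0$ as a Radon measure on $\Om_1$, hence $\chi_A$ is independent of $x_n$ by a standard one-dimensional BV/Fubini argument, giving $A=A'\times(-\tfrac12,\tfrac12)$ for some $A'\subseteq\om$ and proving~\eqref{e:c1}.

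Finally,~\eqref{e:c4} would follow from the same anisotropic lower-semicontinuity inequality specialized to $\widetilde\rho=1$ (so that $\phi_1\equiv 1$), after identifying $J_{\widetilde u}\cap (\partial\om\times(-\tfrac12,\tfrac12))$ with the boundary-trace discrepancy set $\{Tr(u)\neq Tr(g)\}\cap(\partial\om\times(-\tfrac12,\tfrac12))$, and similarly for $u_\rho$, together with $\phi_\rho\geq 1$ on the right-hand side. The main technical obstacle is precisely this anisotropic enhancement of Theorem~\ref{t:comfk}: one needs to show that the blow-up term on the lower side appears with the correct density $\phi_{\widetilde\rho}(\nu_{\widetilde A})$ on $\partial^*\widetilde A$. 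I expect this to follow from the same slicing/integral-geometric construction as in~\cite{cris2}, since the sliced one-dimensional jump density is already of the form $|\nu\cdot\xi|$, which when integrated over $\xi\in\mathbb{S}^{n-1}$ with the anisotropic weight recovers precisely $\phi_{\widetilde\rho}$ on both the jump set of $\widetilde u$ and the reduced boundary of $\widetilde A$.
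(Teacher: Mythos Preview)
Your approach is essentially identical to the paper's: extend by~$g$ to~$\widetilde\Om$, apply Theorem~\ref{t:comfk}, then use the anisotropic surface lower semicontinuity with~$\phi_{\tilde\rho}$ and send~$\tilde\rho\to 0$ to force $(\nu_u)_n=(\nu_{\partial^*A})_n=0$, from which the product structure of~$A$, the membership $u\in\mathcal{KL}(\Om_1)$, and~\eqref{e:c4} all follow. The one point you flag as ``the main technical obstacle''---the anisotropic enhancement of Theorem~\ref{t:comfk} giving $\int_{J_{\widetilde u}\cup\partial^*\widetilde A}\phi_{\tilde\rho}(\nu)\,\di\HH^{n-1}\le\liminf_\rho\int_{J_{\widetilde u_\rho}}\phi_{\tilde\rho}(\nu_{\widetilde u_\rho})\,\di\HH^{n-1}$---is not rederived in the paper but simply quoted from~\cite[Proposition~4.6]{kol}, so you need not reconstruct it from slicing.
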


\begin{proof}
Let $\widetilde{\omega}$ and $\widetilde{\Omega}$ be as in the proof of Corollary~\ref{c:bc}.Along the proof, we denote by~$\partial^{*}E$ and~$\widetilde{\partial}^{*} E$ the reduced boundary of a set~$E \subseteq\widetilde{\Om}$ in~$\Om$ and~$\widetilde{\Om}$, respectively.

The existence of the set~$A$ and of a limit function~$u \in GSBD^{2}(\Om_1)$ such that~\eqref{e:c2}--\eqref{e:c3} holds follows from~Theorem~\ref{t:comfk} applied to the sequence $\widetilde{u}_{\rho} \in GSBD^{2}(\widetilde{\Om})$ defined as in~\eqref{e:extension}. Precisely, there exists $A \subseteq \widetilde{\Om}$ and $\widetilde{u} \in GSBD^{2}(\widetilde{\Om})$ such that~\eqref{e:c2}--\eqref{e:c3} hold for $\widetilde{u}_{\rho}$ and $\widetilde{u}$ in~$\widetilde{\Om}$. Since $\widetilde{u}_{\rho} = \widetilde{u} = g$ in~$\widetilde{\Om} \setminus\Om_1$ and $g \in H^{1}(\R^{n}; \R^{n})$, we clearly have that $u:= \tilde{u} \mathbbm{1}_{\Om_{1}} \in GSBD^{2}(\Om_1)$ and~$A \subseteq \overline{\Om}_1$.

Let us denote by~$\nu_{\tilde{u} \cup \widetilde{\partial}^{*} A}$ the approximate unit normal to~$J_{\tilde{u}} \cup \widetilde{\partial}^{*} A$. By~\cite[Proposition~4.6]{kol},~$\widetilde{u}$ and~$A$ are such that
\begin{align}\label{e:c5}
\int_{J_{\widetilde u} \cup \widetilde{\partial}^{*} A} & \phi( x, \nu_{\widetilde u \cup \widetilde{\partial}^{*}A}) \, \di \HH^{n-1} \leq \liminf_{\rho \to 0} \int_{J_{\widetilde{u}_{\rho}}} \phi(x, \nu_{\widetilde{u}_{\rho}}) \, \di \HH^{n-1}
\end{align}
for every $\phi \in C(\widetilde{\Om} \times \R^{n})$ such that $\phi(x, \cdot)$ is a norm on~$\R^{n}$ for every $x \in \widetilde{\Om}$ and
\begin{displaymath}
c_{1} | \nu| \leq \phi (x, \nu) \leq c_{2} | \nu| \qquad \text{for every $x \in \widetilde{\Om}$ and every $\nu \in \R^{n}$}\,,
\end{displaymath}
for some~$0 < c_{1} \leq c_{2} < + \infty$. 

Recalling~\eqref{e:phirho}, we deduce from~\eqref{e:c5} that for every $\tilde{\rho}>0$
\begin{align}\label{e:c6}
\int_{J_{\widetilde u} \cup \widetilde{\partial}^{*} A} & \phi_{\tilde{\rho}} ( \nu_{\widetilde u \cup \widetilde{\partial}^{*} A} )\, \di \HH^{n-1} \leq \liminf_{\rho \to 0} \int_{J_{\widetilde{u}_{\rho}}} \phi_{\tilde{\rho}} ( \nu_{\widetilde{u}_{\rho}}) \, \di \HH^{n-1} 
\\
&
\leq \liminf_{\rho \to 0} \int_{J_{\widetilde{u}_{\rho}}} \phi_{\rho} (\nu_{\widetilde{u}_{\rho}}) \, \di \HH^{n-1} \nonumber
\\
&
= \liminf_{\rho \to 0} \int_{J_{u_{\rho}}} \phi_{\rho} ( \nu_{u_{\rho}}) \, \di \HH^{n-1} \nonumber
\\
&
\qquad \qquad + \mathcal{H}^{n-1} \bigg( \big \{ Tr (u_{\rho}) \neq Tr (g) \big \} \cap \bigg( \partial\om \times \bigg( - \frac12 , \frac12 \bigg) \bigg ) \bigg) <+\infty\,. \nonumber
\end{align}
Passing to the limsup in~\eqref{e:c6} as~$\tilde{\rho} \to 0$ we deduce that $(\nu_{\widetilde{\partial}^{*} A})_{n} = (\nu_{u})_{n} = 0$ $\HH^{n-1}$-a.e.~in~$J_{u} \cup \widetilde{\partial}^{*} A$. It follows that there exists~$A' \subseteq \omega$ such that~\eqref{e:c1} holds. %Furthermore,~$(\nu_{u})_{n} = 0$ in~$\Om_1$. 

As a consequence of~\eqref{e:hpc}, we infer that $e_{i,n}(u) = 0$ in~$\Om_{1}$ for every $i=1, \ldots, n$. Hence, $u \in \mathcal{KL}(\Om_{1})$. Taking into account that~$(\nu_{u})_{n} = ( \nu_{\widetilde{\partial}^{*} A})_{n} = 0$ and that  
\begin{displaymath}
J_{\tilde{u}} \cap \partial \om \times \bigg( -\frac12 , \frac12 \bigg) =  \big \{ Tr (u) \neq Tr (g) \big \} \cap \partial\om \times \bigg( -\frac12 , \frac12 \bigg) \bigg) \,,
\end{displaymath}
we infer~\eqref{e:c4} by rewriting~\eqref{e:c6}, and the proof is thus concluded.
\end{proof}

\begin{corollary}\label{c:conv_minimizer}
Under the assumptions of Theorem~\ref{t:compactness2}, let $u_{\rho} \in GSBD^{2}(\Om_1)$ be a sequence of minimizers of~$\E_{\rho}^{g}$. Then, there exist a subsequence, still denoted by~$u_{\rho}$, such that the set $A := \{ x \in \Om_1: \, | u_{\rho} (x) | \to +\infty\}$ is of finite perimeter, and a minimizer~$u \in \mathcal{KL}(\Om_1)$ of~$\E^{g}_{0}$ with~$u=0$ on~$A$ such that~\eqref{e:c2}--\eqref{e:c3} hold. Moreover, $\partial^{*} A \subseteq J_{u}$, $e(u_{\rho}) \to e(u)$ in $L^{2}(\Om_{1}; \M^{n}_{s})$, and 
\begin{align}
\label{measurelim}
 \HH^{n-1}& (J_{u} )   + \mathcal{H}^{n-1} \bigg(  \big \{ Tr (u) \neq Tr (g) \big \}   \cap \bigg( \partial\om \times \bigg(- \frac12 , \frac12 \bigg) \bigg) \bigg) 
\\
&
  \!\!= \lim_{\rho \to 0} \, \int_{J_{u_{\rho}}} \phi_{\rho}( \nu_{u_{\rho}}) \, \di \HH^{n-1} + \mathcal{H}^{n-1} \bigg(  \big \{ Tr (u_{\rho}) \neq Tr (g) \big \} \cap \bigg( \partial\om \times \bigg( - \frac12, \frac12 \bigg) \bigg) \bigg) \nonumber  \,.
\end{align}
\end{corollary}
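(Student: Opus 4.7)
The plan is to combine the $\Gamma$-convergence of Corollary~\ref{c:bc} with the compactness of Theorem~\ref{t:compactness2}. Since $g \in \mathcal{KL}(\R^{n}) \cap H^{1}(\R^{n};\R^{n})$ satisfies $e_{i,n}(g)=0$, the rescaled strain $e^{\rho}(g)$ has only its $(n-1)\times(n-1)$ upper block $\overline{e}(g)$ nonzero, so $\E^{g}_{\rho}(g) \leq C\|e(g)\|_{L^{2}(\Om_{1})}^{2}$ uniformly in $\rho$, and by minimality $\sup_{\rho>0} \E^{g}_{\rho}(u_{\rho})<+\infty$, which verifies~\eqref{e:hpc}. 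Theorem~\ref{t:compactness2} then produces, along a subsequence, the finite-perimeter set $A = A'\times(-\frac12,\frac12)$ and a limit $u \in \mathcal{KL}(\Om_{1})$ with $u=0$ on $A$, together with $u_{\rho}\to u$ a.e.\ on $\Om_{1}\setminus A$ and $e(u_{\rho}) \rightharpoonup f$ weakly in $L^{2}(\Om_{1};\M^{n}_{s})$, where $f = e(u)$ on $\Om_{1}\setminus A$ and the $(i,n)$-entries of $f$ vanish identically thanks to Proposition~\ref{p:compactness}.

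Next I would run the liminf inequalities for the bulk and surface parts separately. The pointwise bound $\C_{0}\overline{e}(u_{\rho})\cdot\overline{e}(u_{\rho}) \leq \C e^{\rho}(u_{\rho})\cdot e^{\rho}(u_{\rho})$ from the definition of $\C_{0}$, combined with weak lower semicontinuity of the convex quadratic form $\C_{0}$ on $\Om_{1}\setminus A$ and on $A$ separately (using $e(u)=0$ on $A$), yields
\begin{displaymath}
\int_{\Om_{1}}\C_{0}e(u)\cdot e(u)\,\di x + \int_{A}\C_{0}\overline{f}\cdot\overline{f}\,\di x \,\leq\, \liminf_{\rho\to 0}\int_{\Om_{1}}\C e^{\rho}(u_{\rho})\cdot e^{\rho}(u_{\rho})\,\di x,
\end{displaymath}
where $\overline{f}$ is the $(n-1)\times(n-1)$ block of $f|_{A}$. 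Adding~\eqref{e:c4} from Theorem~\ref{t:compactness2} gives
\begin{displaymath}
\E^{g}_{0}(u) + \int_{A}\C_{0}\overline{f}\cdot\overline{f}\,\di x + \HH^{n-1}(\partial^{*}A\setminus J_{u}) \,\leq\, \liminf_{\rho\to 0}\E^{g}_{\rho}(u_{\rho}).
\end{displaymath}
On the other side, for any competitor $v\in\mathcal{KL}(\Om_{1})$ Corollary~\ref{c:bc} furnishes a recovery sequence $v_{\rho}\to v$ with $\E^{g}_{\rho}(v_{\rho})\to \E^{g}_{0}(v)$; minimality of $u_{\rho}$ then forces $\limsup_{\rho}\E^{g}_{\rho}(u_{\rho}) \leq \E^{g}_{0}(v)$. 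Choosing $v = u$ simultaneously collapses both left-hand defects: coercivity of $\C_{0}$ gives $\overline{f}=0$ on $A$ (hence $f=0$ there) and $\HH^{n-1}(\partial^{*}A\setminus J_{u}) = 0$, i.e.\ $\partial^{*}A \subseteq J_{u}$, and $u$ is a minimizer of $\E^{g}_{0}$.

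All the inequalities above collapse to equalities, in particular $\lim_{\rho}\E^{g}_{\rho}(u_{\rho}) = \E^{g}_{0}(u)$. Since each of the bulk and surface liminf bounds was separately tight, each converges individually: the surface convergence is precisely~\eqref{measurelim}, while the bulk convergence
\begin{displaymath}
\lim_{\rho\to 0}\int_{\Om_{1}}\C e^{\rho}(u_{\rho})\cdot e^{\rho}(u_{\rho})\,\di x = \int_{\Om_{1}}\C_{0}e(u)\cdot e(u)\,\di x
\end{displaymath}
together with $\C_{0}\overline{e}(u_{\rho})\cdot\overline{e}(u_{\rho}) \leq \C e^{\rho}(u_{\rho})\cdot e^{\rho}(u_{\rho})$ and the weak convergence $\overline{e}(u_{\rho})\rightharpoonup e(u)$ in $L^{2}(\Om_{1};\M^{n-1}_{s})$ yields $\lim_{\rho}\int_{\Om_{1}}\C_{0}\overline{e}(u_{\rho})\cdot\overline{e}(u_{\rho})\,\di x = \int_{\Om_{1}}\C_{0}e(u)\cdot e(u)\,\di x$. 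Expanding $\int_{\Om_{1}}\C_{0}(\overline{e}(u_{\rho})-e(u))\cdot(\overline{e}(u_{\rho})-e(u))\,\di x$ and using coercivity of $\C_{0}$ upgrades this to strong $L^{2}$-convergence of $\overline{e}(u_{\rho})$; combined with $e_{i,n}(u_{\rho})\to 0$ in $L^{2}$, we obtain $e(u_{\rho})\to e(u)$ strongly in $L^{2}(\Om_{1};\M^{n}_{s})$.

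The delicate point I expect is the bookkeeping of the bulk liminf on $A$ versus $\Om_{1}\setminus A$, so that the nonnegative defect $\int_{A}\C_{0}\overline{f}\cdot\overline{f}\,\di x$ is available and can be played off against the jump defect $\HH^{n-1}(\partial^{*}A\setminus J_{u})$ to force \emph{all} inequalities to saturate simultaneously upon testing with $v=u$. Once the chain tightens, both $\partial^{*}A\subseteq J_{u}$ and the strong $L^{2}$-convergence of $e(u_{\rho})$ follow from the standard \emph{weak convergence plus convergence of norms} upgrade in the Hilbert space structure induced by $\C_{0}$.
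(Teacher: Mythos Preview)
Your argument is correct and follows the same route as the paper's proof: apply Theorem~\ref{t:compactness2} for compactness, use the separate liminf inequalities for bulk and surface, and then invoke the recovery sequence from Corollary~\ref{c:bc} together with minimality of~$u_{\rho}$ to force all inequalities to become equalities, from which $\partial^{*}A\subseteq J_{u}$, the minimality of~$u$, \eqref{measurelim}, and the strong $L^{2}$-convergence of~$e(u_{\rho})$ all follow. Your bookkeeping is in fact more explicit than the paper's: you track the bulk defect $\int_{A}\C_{0}\overline{f}\cdot\overline{f}\,\di x$ on~$A$ and spell out the weak-plus-norm upgrade for strong convergence, whereas the paper compresses these into a reference to Proposition~\ref{p:compactness} and the standard $\Gamma$-convergence argument.
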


\begin{proof}
Let $u_{\rho}$ be as in the statement of the corollary. Then, it is easy to check that~\eqref{e:hpc} is satisfied. Hence, Theorem~\ref{t:compactness2} implies that there exist~$A$ and~$u \in \mathcal{KL}(\Om_1)$ such that~\eqref{e:c1}--\eqref{e:c4} hold. The minimality of~$u$ follows from Theorem~\ref{t:limit} by the usual~$\Gamma$-convergence argument.

Finally, we notice that
\begin{align}
& \int_{\Om_1} \C_{0} e(u) {\, \cdot\,} e(u) \, \di x \leq \liminf_{\rho \to 0} \int_{\Om_1} \C e^{\rho}(u_{\rho}) {\, \cdot\,} e^{\rho} (u_{\rho}) \, \di x\,, \label{e:sm}\\[1mm]
& \vphantom{\int} \HH^{n-1}(J_{u} \cup  \partial^{*} A )  + \mathcal{H}^{n-1} \bigg(  \big \{ Tr (u) \neq Tr (g) \big \}   \cap \bigg( \partial\om \times \bigg(- \frac12 , \frac12 \bigg) \bigg) \bigg)  \label{e:sm2}
\\
&
 \quad \leq \liminf_{\rho \to 0} \, \int_{J_{u_{\rho}}} \phi_{\rho}( \nu_{u_{\rho}}) \, \di \HH^{n-1} + \mathcal{H}^{n-1} \bigg(  \big \{ Tr (u_{\rho}) \neq Tr (g) \big \} \cap \bigg( \partial\om \times \bigg( - \frac12, \frac12 \bigg) \bigg) \bigg) \nonumber  \,. 
\end{align}
Since we can construct a recovery sequence $v_{\rho} \in GSBD^{2}(\Om_1)$ for~$u$ such that $\E^{g}_{\rho}(v_{\rho}) \to \E^{g}_{0}(u)$ and~$u_{\rho}$ is a minimizer of~$\E^{g}_{\rho}$ for every $\rho$, we deduce that, along a suitable not relabeled subsequence, the inequalities \eqref{e:sm}--\eqref{e:sm2} are actually equalities. This implies that~$\partial^{*}  A \subseteq J_{u}$,~\eqref{measurelim}, and that
\begin{displaymath}
 \int_{\Om_1} \C_{0} e(u) {\, \cdot\,} e(u) \, \di x = \lim_{\rho \to 0} \, \int_{\Om_{1}} \C_{0} e(u_{\rho}) {\, \cdot\,} e(u_{\rho}) \, \di x \,.
\end{displaymath}
From the last equality and from Proposition~\ref{p:compactness} we infer that $e(u_{\rho}) \to e(u)$ in $L^{2}(\Om_{1}; \M^{n}_{s})$.
\end{proof}

\section*{Acknowledgements}

The authors would like to acknowledge the kind hospitality of the Erwin Schr\"odinger International
Institute for Mathematics and Physics (ESI), where part of this research was developed during the
workshop {\em  Modeling of Crystalline Interfaces and Thin Film Structures: A Joint Mathematics-Physics Symposium}. S.A. also acknowledges the support of the OeAD-WTZ project CZ 01/2021 and of the FWF through the project I 5149.

\bibliographystyle{siam}
\bibliography{A-T_20_bib_2.bib}

\begin{thebibliography}{10}

\bibitem{mor1}
{\sc H.~Abels, M.~G. Mora, and S.~M\"{u}ller}, {\em The time-dependent von
  {K}\'{a}rm\'{a}n plate equation as a limit of 3d nonlinear elasticity}, Calc.
  Var. Partial Differential Equations, 41 (2011), pp.~241--259.

\bibitem{alm}
{\sc S.~Almi, S.~Belz, S.~Micheletti, and S.~Perotto}, {\em A
  dimension-reduction model for brittle fractures on thin shells with mesh
  adaptivity}, Math. Models Methods Appl. Sci., 31 (2021), pp.~37--81.

\bibitem{new}
{\sc S.~Almi and E.~Tasso}, {\em A new proof of compactness in $gsbd$},
  Preprint,  (2021).

\bibitem{amb2}
{\sc L.~Ambrosio, A.~Coscia, and G.~Dal~Maso}, {\em Fine properties of
  functions with bounded deformation}, Arch. Rational Mech. Anal., 139 (1997),
  pp.~201--238.

\bibitem{amb1}
{\sc L.~Ambrosio, N.~Fusco, and D.~Pallara}, {\em Functions of bounded
  variation and free discontinuity problems}, Oxford Mathematical Monographs,
  The Clarendon Press, Oxford University Press, New York, 2000.

\bibitem{per}
{\sc G.~Anzellotti, S.~Baldo, and D.~Percivale}, {\em Dimension reduction in
  variational problems, asymptotic development in {$\Gamma$}-convergence and
  thin structures in elasticity}, Asymptotic Anal., 9 (1994), pp.~61--100.

\bibitem{bab2}
{\sc J.-F. Babadjian}, {\em Quasistatic evolution of a brittle thin film},
  Calc. Var. Partial Differential Equations, 26 (2006), pp.~69--118.

\bibitem{bab1}
{\sc J.-F. Babadjian and D.~Henao}, {\em Reduced models for linearly elastic
  thin films allowing for fracture, debonding or delamination}, Interfaces Free
  Bound., 18 (2016), pp.~545--578.

\bibitem{fra}
{\sc B.~Bourdin, G.~A. Francfort, and J.-J. Marigo}, {\em The variational
  approach to fracture}, J. Elasticity, 91 (2008), pp.~5--148.

\bibitem{bra}
{\sc A.~Braides and I.~Fonseca}, {\em Brittle thin films}, Appl. Math. Optim.,
  44 (2001), pp.~299--323.

\bibitem{CCS20}
{\sc F.~Cagnetti, A.~Chambolle, and L.~Scardia}, {\em Korn and
  poincar\'{e}-korn inequalities for functions with small jump set}, Preprint
  cvgmt.sns.it/paper/4636/,  (2020).

\bibitem{cha}
{\sc A.~Chambolle}, {\em An approximation result for special functions with
  bounded deformation}, J. Math. Pures Appl. (9), 83 (2004), pp.~929--954.

\bibitem{cris}
{\sc A.~Chambolle and V.~Crismale}, {\em A density result in {$GSBD^p$} with
  applications to the approximation of brittle fracture energies}, Arch.
  Ration. Mech. Anal., 232 (2019), pp.~1329--1378.

\bibitem{cris2}
\leavevmode\vrule height 2pt depth -1.6pt width 23pt, {\em Compactness and
  lower semicontinuity in {$GSBD$}}, J. Eur. Math. Soc. (JEMS), 23 (2021),
  pp.~701--719.

\bibitem{Cia1997}
{\sc P.~Ciarlet}, {\em Mathematial Elasticity; Volume II: Theory of Plates},
  vol.~27 of Studies in Mathematics and its Applications, Elsevier, Amsterdam,
  1997.

\bibitem{dal}
{\sc G.~Dal~Maso}, {\em Generalised functions of bounded deformation}, J. Eur.
  Math. Soc. (JEMS), 15 (2013), pp.~1943--1997.

\bibitem{dav3}
{\sc E.~Davoli}, {\em Linearized plastic plate models as {$\Gamma$}-limits of
  3{D} finite elastoplasticity}, ESAIM Control Optim. Calc. Var., 20 (2014),
  pp.~725--747.

\bibitem{dav2}
\leavevmode\vrule height 2pt depth -1.6pt width 23pt, {\em Quasistatic
  evolution models for thin plates arising as low energy {$\Gamma$}-limits of
  finite plasticity}, Math. Models Methods Appl. Sci., 24 (2014),
  pp.~2085--2153.

\bibitem{dav1}
{\sc E.~Davoli and M.~G. Mora}, {\em A quasistatic evolution model for
  perfectly plastic plates derived by {$\Gamma$}-convergence}, Ann. Inst. H.
  Poincar\'{e} Anal. Non Lin\'{e}aire, 30 (2013), pp.~615--660.

\bibitem{fed}
{\sc H.~Federer}, {\em Geometric measure theory}, Die Grundlehren der
  mathematischen Wissenschaften, Band 153, Springer-Verlag New York Inc., New
  York, 1969.

\bibitem{fre}
{\sc L.~Freddi, R.~Paroni, and C.~Zanini}, {\em Dimension reduction of a crack
  evolution problem in a linearly elastic plate}, Asymptot. Anal., 70 (2010),
  pp.~101--123.

\bibitem{fri}
{\sc M.~Friedrich}, {\em A piecewise {K}orn inequality in {$SBD$} and
  applications to embedding and density results}, SIAM J. Math. Anal., 50
  (2018), pp.~3842--3918.

\bibitem{MR4121138}
{\sc M.~Friedrich and M.~Kru\v{z}\'{\i}k}, {\em Derivation of von
  {K}\'{a}rm\'{a}n plate theory in the framework of three-dimensional
  viscoelasticity}, Arch. Ration. Mech. Anal., 238 (2020), pp.~489--540.

\bibitem{mor4}
{\sc G.~Friesecke, R.~D. James, M.~G. Mora, and S.~M\"{u}ller}, {\em Derivation
  of nonlinear bending theory for shells from three-dimensional nonlinear
  elasticity by {G}amma-convergence}, C. R. Math. Acad. Sci. Paris, 336 (2003),
  pp.~697--702.

\bibitem{MR1916989}
{\sc G.~Friesecke, R.~D. James, and S.~M\"{u}ller}, {\em A theorem on geometric
  rigidity and the derivation of nonlinear plate theory from three-dimensional
  elasticity}, Comm. Pure Appl. Math., 55 (2002), pp.~1461--1506.

\bibitem{mul1}
\leavevmode\vrule height 2pt depth -1.6pt width 23pt, {\em A hierarchy of plate
  models derived from nonlinear elasticity by gamma-convergence}, Arch. Ration.
  Mech. Anal., 180 (2006), pp.~183--236.

\bibitem{grif}
{\sc A.~A. Griffith}, {\em The phenomena of rupture and flow in solids},
  Philosophical Transactions of the Royal Society of London. Series A,
  Containing Papers of a Mathematical or Physical Character, 221 (1921),
  pp.~163--198.

\bibitem{hor}
{\sc P.~Hornung, S.~Neukamm, and I.~Vel\v{c}i\'{c}}, {\em Derivation of a
  homogenized nonlinear plate theory from 3d elasticity}, Calc. Var. Partial
  Differential Equations, 51 (2014), pp.~677--699.

\bibitem{hor2}
{\sc P.~Hornung and I.~Vel\v{c}i\'{c}}, {\em Derivation of a homogenized
  von-{K}\'{a}rm\'{a}n shell theory from 3{D} elasticity}, Ann. Inst. H.
  Poincar\'{e} Anal. Non Lin\'{e}aire, 32 (2015), pp.~1039--1070.

\bibitem{iur}
{\sc F.~Iurlano}, {\em A density result for {GSBD} and its application to the
  approximation of brittle fracture energies}, Calc. Var. Partial Differential
  Equations, 51 (2014), pp.~315--342.

\bibitem{kol}
{\sc S.~Kholmatov and P.~Piovano}, {\em A unified model for stress-driven
  rearrangement instabilities}, Arch. Ration. Mech. Anal.,  (2020).

\bibitem{bab3}
{\sc A.~A. Le\'{o}n~Baldelli, J.-F. Babadjian, B.~Bourdin, D.~Henao, and
  C.~Maurini}, {\em A variational model for fracture and debonding of thin
  films under in-plane loadings}, J. Mech. Phys. Solids, 70 (2014),
  pp.~320--348.

\bibitem{mor3}
{\sc M.~Lewicka, M.~G. Mora, and M.~R. Pakzad}, {\em Shell theories arising as
  low energy {$\Gamma$}-limit of 3d nonlinear elasticity}, Ann. Sc. Norm.
  Super. Pisa Cl. Sci. (5), 9 (2010), pp.~253--295.

\bibitem{mor2}
\leavevmode\vrule height 2pt depth -1.6pt width 23pt, {\em The matching
  property of infinitesimal isometries on elliptic surfaces and elasticity of
  thin shells}, Arch. Ration. Mech. Anal., 200 (2011), pp.~1023--1050.

\bibitem{mag}
{\sc G.~B. Maggiani and M.~G. Mora}, {\em Quasistatic evolution of perfectly
  plastic shallow shells: a rigorous variational derivation}, Ann. Mat. Pura
  Appl. (4), 197 (2018), pp.~775--815.

\bibitem{mie1}
{\sc A.~Mielke and T.~Roub\'{\i}\v{c}ek}, {\em Rate-independent systems},
  vol.~193 of Applied Mathematical Sciences, Springer, New York, 2015.
\newblock Theory and application.

\bibitem{mie2}
{\sc A.~Mielke, T.~Roub\'{\i}\v{c}ek, and U.~Stefanelli}, {\em
  {$\Gamma$}-limits and relaxations for rate-independent evolutionary
  problems}, Calc. Var. Partial Differential Equations, 31 (2008),
  pp.~387--416.

\bibitem{neu}
{\sc S.~Neukamm and I.~Vel\v{c}i\'{c}}, {\em Derivation of a homogenized
  von-{K}\'{a}rm\'{a}n plate theory from 3{D} nonlinear elasticity}, Math.
  Models Methods Appl. Sci., 23 (2013), pp.~2701--2748.

\bibitem{vel}
{\sc I.~Vel\v{c}i\'{c}}, {\em On the derivation of homogenized bending plate
  model}, Calc. Var. Partial Differential Equations, 53 (2015), pp.~561--586.

\end{thebibliography}

\end{document}